\newtheorem{theorem}{Theorem}[section]
\newtheorem{proposition}[theorem]{Proposition}
\newtheorem{lemma}[theorem]{Lemma}
\newtheorem{claim}[theorem]{Claim}
\newtheorem{conjecture}[theorem]{Conjecture}
\theoremstyle{definition}
\renewcommand{\leq}{\leqslant}
\renewcommand{\le}{\leqslant}
\renewcommand{\geq}{\geqslant}
\renewcommand{\ge}{\geqslant}
\newcommand{\Bin}{\operatorname{Bin}}
\newcommand{\car}{\mathbin{\Box}}
\newcommand{\PC}{\sigma}
\providecommand\given{\nonscript\:\ifthenelse{\equal{\delimsize}{}}{\big\vert}{\delimsize\vert}\nonscript\:\mathopen{}}
\let\Pr\undefined
\DeclarePairedDelimiterXPP\Pr[1]{\mathbb{P}}(){}{#1}
\DeclarePairedDelimiterXPP\Ex[1]{\mathbb{E}}{[}{]}{}{#1}
\DeclarePairedDelimiter{\norm}{\lVert}{\rVert}
\title{Counting independent sets in expanding bipartite regular graphs}
\author{Maur\'icio Collares}
\author{Joshua Erde}
\author{Anna Geisler}
\author{Mihyun Kang}
\address{Institute of Discrete Mathematics, Graz University of Technology, Steyrergasse 30, 8010 Graz, Austria}
\email{\{collares,erde,geisler,kang\}@math.tugraz.at}
\begin{document}

\begin{abstract}
In this paper we provide an asymptotic expansion for the number of independent sets in a general class of regular, bipartite graphs satisfying some vertex-expansion properties, extending results of Jenssen and Perkins on the hypercube and strengthening results of Jenssen, Perkins and Potukuchi. More precisely, we give an expansion of the independence polynomial of such graphs using a polymer model and the cluster expansion. In addition to the number of independent sets, our results yields information on the typical structure of (weighted) independent sets in such graphs. The class of graphs we consider covers well-studied cases like the hypercube or the middle layers graph, and we show further that it includes any Cartesian product of bipartite, regular base graphs of bounded size. To this end, we prove strong bounds on the vertex expansion of bipartite and regular Cartesian product graphs, which might be of independent interest.
\end{abstract}

\maketitle

\section{Introduction}

Independent sets (that is, sets of vertices inducing no edge) are a natural object of study, in part due to the wide range of combinatorial problems which can be phrased in terms of independent sets in graphs, or more generally hypergraphs, see for example \cite{BMS15}.
In particular, there is much interest in describing the number, and typical structure, of independent sets in a graph $G$.
However, counting, or even approximating, the number of independent sets in an arbitrary graph $G$ is known to be a computationally hard problem \cite{PB83,S10}.
Nevertheless, for more restrictive classes of graphs, and in particular for \emph{regular} graphs, much is known about the number, and structure, of independent sets.

Given a graph $G$, we denote by $\mathcal{I}(G)$ the set of independent sets in $G$ and by $i(G) \coloneqq |\mathcal{I}(G)|$ its cardinality, i.e., the number of independent sets in $G$.
Answering a question posed by Granville at the 1988 Number Theory Conference in Banff, Alon~\cite{Al91} proved that $i(G) \leq 2^{(1+o(1))n/2}$ for every $d$-regular $n$-vertex graph $G$.
This has been strengthened by Kahn \cite{Ka01}, for bipartite graphs, and by Zhao \cite{Zh09}, for arbitrary graphs, to
\[
  i(G) \leq (2^{d+1}-1)^{\frac{n}{2d}}.
\]
A well-studied example of such a regular graph is the $t$-dimensional hypercube $Q^t$, that is the graph on the vertex set $\{0, 1\}^t$ with an edge connecting two vertices if and only if they differ in exactly one coordinate.
The independent set problem on $Q^t$ is of particular interest, partly due to its relation to problems from statistical physics and computer science \cite{DGGJ04}.
Asymptotics for $i(Q^t)$ were given by Korshunov and Sapozhenko \cite{KoSa83,Sa1987} (see \cite{Ga19} for a short exposition), showing that
\begin{equation}\label{resultSaKo}
  i(Q^t)=(1+o(1))2\sqrt{e}2^{2^{t-1}},
\end{equation}
and this was improved by Jenssen and Perkins \cite{JePe2020}, who showed that
\begin{equation}\label{resultJePe}
  i(G)=2\sqrt{e} 2^{2^{t-1}} \left(1+ \frac{3t^2-3t-2}{8 \cdot 2^t} + \frac{243t^4 - 646 t^3 - 33 t^2 + 436 t + 76}{384 \cdot 2^{2t}} +O\left(t^6 2^{-3t}\right)\right),
\end{equation}
and gave a description of the general terms in such an expansion, together with an explicit algorithm to compute them. Recently, Kronenberg and Spinka \cite{KrSp22} used similar methods to count the number of independent sets in the \emph{percolated hypercube} $(Q^t)_p$, which is a random sparsification of $Q^t$ where each edge in $Q^t$ is retained with probability $p\in [0,1]$ independently.

For general $d$-regular bipartite graphs satisfying certain expansion assumptions, an approximation of $i(G)$ up to a $(1+o(1)$ multiplicative factor is implicit in the work of Jenssen, Perkins and Potukuchi \cite{JePePo23}, whose work moreover gives a fully polynomial-time approximation scheme (FPTAS) for $i(G)$. Recently, Arras, Garbe and Joos \cite{ArGaJo24} extended these results to regular, $r$-partite, $r$-graphs for $r \geq 3$.

The key ideas underpinning these recent results come from the \emph{hard-core model} which initially arose in the context of statistical physics.
Given a graph $G$, the \emph{independence polynomial} is defined as
\[
  Z(G, \lambda) \coloneqq \sum_{I \in \mathcal{I}(G)} \lambda^{|I|},
\]
where $\lambda>0$ is a parameter known as the \emph{fugacity}.
Note that $Z(G, 1)=i(G)$.

In the \emph{hard-core model} one samples a random independent set $I \in \mathcal{I}(G)$ with probability proportional to $\lambda^{|I|}$.
That is,
\begin{equation}\label{measure-mu}
  \mathbb{P}_{\mu}(\mathbf{I}=I) \coloneqq \frac{\lambda^{|I|}}{Z(G, \lambda)},
\end{equation}
where the independence polynomial is the partition function, i.e., the normalising constant of the resulting probability distribution $\mu$. Kahn \cite{Ka01} initiated the idea of studying the hard-core model on the hypercube $Q^t$ and used it to give information about a `typical' configuration. Extending these ideas Galvin \cite{Ga2011} made use of the graph container method developed by Sapozhenko \cite{Sa1987} and obtained a refined version of \eqref{resultSaKo}.
Jenssen and Perkins \cite{JePe2020} then reformulated this approach in the language of polymer models and introduced a tool from statistical physics, the cluster expansion, to give a formal expansion for $Z(G, \lambda)$, which in the case $\lambda=1$ yields \eqref{resultJePe}. The question of counting independent sets, and more generally homomorphisms, has also been considered in discrete torus $\mathbb{Z}^n_m$ when $m$ is even using related techniques by Jenssen and Keevash \cite{JK23}. Jenssen, Perkins and Potukuchi \cite{JePePo23} also use the cluster expansion to give an approximation to $Z(G, \lambda)$ for regular bipartite graphs satisfying certain vertex-expansion properties, where there is some payoff between the expansion required and the fugacity $\lambda$.

In this paper we provide a more precise asymptotic estimate of $Z(G,\lambda)$, and so in particular of $i(G)$, for regular bipartite graphs satisfying certain vertex-expansion properties. Furthermore, we show that these properties hold in a natural class of graphs which come with a type of product structure, generalising the hypercube and discrete torus.

\subsection{Main results}
As in \cite{ArGaJo24,JePe2020,JePePo23}, in order to derive bounds on the number of independent sets we study the independence polynomial $Z(G, \lambda)$, making extensive use of a polymer model and the cluster expansion. As pointed out by Galvin~\cite{Ga2011}, % second to last paragraph in introduction
the main ingredients for applying the cluster expansion method are regularity and knowledge about strong enough isoperimetric inequalities of the underlying graph.

The \emph{co-degree} of a graph $G$ is the maximum of $|N(u) \cap N(v)|$ taken over all distinct $u,v \in V(G)$. Our main result is an expansion of the independence polynomial $Z(G, \lambda)$ in $\lambda$, for all $\lambda \geq C_0 \frac{\log^2d}{d^{1/2}}$ and all regular, bipartite graphs with sufficient vertex-expansion properties and with bounded co-degree.

\begin{theorem}\label{thm:general}
For $d,n\in \mathbb N$ let $G$ be a $d$-regular, $n$-vertex bipartite graph with bipartition classes $\mathcal{O}$ and $\mathcal{E}$. Suppose $n=\omega(d^{5})$
and $C_0 \frac{\log^2 d}{d^{1/2}} \leq \lambda \leq C_0$ for some sufficiently large positive constant $C_0$. Suppose $G$ has co-degree at most $\Delta_2 \in \mathbb{N}$ (independent of $d$ and $n$), and that  any set $X \subseteq \mathcal{O}$ or $X \subseteq \mathcal{E}$ satisfies the following vertex-expansion properties:
  \begin{enumerate}[(P1)]
      \item $|N(X)| \geq \frac{32 \log d}{\min(\lambda, 1)} |X|$ if $|X| \leq d^3 \log n$, 
      \item\label{i:main:large-expansion} $|N(X)| \geq (1+\Omega(1/d))|X|$ if $|X|\leq \frac{n}{4}$.
  \end{enumerate}
   Then, for every $j \geq 2$ and $\mathcal{D} \in \{\mathcal{O}, \mathcal{E}\}$, there exist functions $L_j^{\mathcal{D}}(n,d,\lambda)$ and $\varepsilon_j^{\mathcal{D}}(n,d,\lambda)$ satisfying that, as $d \to \infty$,
\[
  |L^{\mathcal{D}}_j|=O\left(\frac{n d^{2(j-1)} \lambda^j}{(1+\lambda)^{jd}}\right)\qquad \textit{and} \qquad
    |\varepsilon^{\mathcal{D}}_j| =O\left(\frac{n d^{2j} \lambda^{j+1} }{(1+\lambda)^{(j+1)d}}\right),
\]
such that, for every $k \in \mathbb{N}$, 
  \[
    Z(G, \lambda) = \left((1+\lambda)e^{\lambda/(1+\lambda)^d}\right)^{n/2}  \sum_{\mathcal{D} \in \{\mathcal{O}, \mathcal{E}\}}\exp\left(\mathbbm{1}_{k\geq 2}\cdot \sum_{j=2}^k L^{\mathcal{D}}_{j} + \varepsilon^{\mathcal{D}}_k\right).
  \]
In particular,  as $d \to \infty$ the number of independent sets in $G$ satisfies
  \[
    i(G)=2^{n/2+1} \exp\left(\frac{n}{2^{d+1}} + O\left(\frac{nd^{2}}{2^{2d}}\right)\right).
  \]
\end{theorem}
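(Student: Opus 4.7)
For each side $\mathcal{D} \in \{\mathcal{O}, \mathcal{E}\}$, I would call an independent set $I$ \emph{$\mathcal{D}$-defect} if every $2$-linked component of $I \cap \mathcal{D}$ has size below $n/4$. A \emph{polymer} on $\mathcal{D}$ is such a $2$-linked set $\gamma \subseteq \mathcal{D}$; two polymers are compatible iff they are not $2$-linked, with weight $w(\gamma) = \lambda^{|\gamma|}/(1+\lambda)^{|N(\gamma)|}$ obtained by summing over admissible $\mathcal{D}^c$-configurations forbidden to touch $N(\gamma)$. Setting $\Xi_{\mathcal{D}} = \sum_{\Gamma}\prod_{\gamma\in\Gamma}w(\gamma)$ over compatible collections of polymers, one checks directly that the $\mathcal{D}$-defect independent sets contribute exactly $(1+\lambda)^{n/2}\Xi_{\mathcal{D}}$ to $Z(G,\lambda)$. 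By the strong vertex-expansion~\ref{i:main:large-expansion}, independent sets that are neither $\mathcal{O}$-defect nor $\mathcal{E}$-defect contribute only $(1+\lambda)^{n/2}e^{-\Omega(n/d)}$, via a Sapozhenko-type container estimate; this residual is absorbed into $\varepsilon_k^{\mathcal{D}}$.

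\textbf{Cluster expansion, truncation, and error bounds.} I would apply the cluster expansion $\log \Xi_{\mathcal{D}} = \sum_{\Gamma}\phi(\Gamma)\prod_{\gamma\in\Gamma}w(\gamma)$, where the sum runs over clusters (ordered tuples of polymers with connected incompatibility graph) and $\phi$ is the Ursell function, verifying the Koteck\'y--Preiss criterion with decoration $g(\gamma)=|\gamma|$: property \textup{(P1)} gives $|N(\gamma)| \geq (32\log d/\min(\lambda,1))|\gamma|$, which together with $\log(1+\lambda)\geq \min(\lambda,1)/2$ forces $w(\gamma) \leq \lambda^{|\gamma|}/d^{16|\gamma|}$, while the co-degree bound $\Delta_2$ and a standard tree-encoding yield at most $(ed^2)^s$ $2$-linked sets of size $s$ through a fixed vertex, so $\sum_{\gamma \ni v}w(\gamma)e^{|\gamma|}\leq 1$ with large slack. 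I would then sort clusters by total polymer mass $\|\Gamma\|:=\sum_{\gamma}|\gamma|$. The $\|\Gamma\|=1$ terms sum to exactly $\frac{n\lambda}{2(1+\lambda)^d}$, which I absorb by replacing $(1+\lambda)^{n/2}$ with $((1+\lambda)e^{\lambda/(1+\lambda)^d})^{n/2}$ in front of each $\mathcal{D}$-summand. The sum of terms with $\|\Gamma\|=j$ defines $L_j^{\mathcal{D}}$, and the tail $\|\Gamma\|>k$, combined with the defect-classification remainder, defines $\varepsilon_k^{\mathcal{D}}$. Bounding the number of clusters of mass $j$ by connected subgraphs of $G^2$ of bounded order (a factor $nd^{2(j-1)}$) and cluster weights by $(\lambda/(1+\lambda)^d)^j$ yields the stated estimate on $|L_j^{\mathcal{D}}|$; summing the geometric tail yields the stated bound on $|\varepsilon_k^{\mathcal{D}}|$.

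\textbf{The $i(G)$ asymptotic and main obstacle.} Specialising the expansion at $\lambda=1$ and $k=1$, the indicator $\mathbbm{1}_{k\geq 2}$ vanishes, $((1+\lambda)e^{\lambda/(1+\lambda)^d})^{n/2}\big|_{\lambda=1} = 2^{n/2}e^{n/2^{d+1}}$, and $|\varepsilon_1^{\mathcal{D}}| = O(nd^2/2^{2d})$, so
\[
i(G) = 2^{n/2}e^{n/2^{d+1}}\sum_{\mathcal{D}}\exp(\varepsilon_1^{\mathcal{D}}) = 2^{n/2+1}\exp\!\Bigl(\tfrac{n}{2^{d+1}}+O(nd^2/2^{2d})\Bigr),
\]
using that both $\mathcal{D}$-summands share the same sub-exponential correction. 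The main obstacle will be the low-fugacity regime $\lambda\sim \log^2 d/d^{1/2}$: here $(1+\lambda)^{|N(\gamma)|}$ offers little room and one needs the full $32\log d/\min(\lambda,1)$ factor in \textup{(P1)}, combined with a careful tree-encoding using the $\Delta_2$ bound, to keep cluster weights summable and to obtain the sharp truncation error $O(nd^{2k}\lambda^{k+1}/(1+\lambda)^{(k+1)d})$. The assumption $n=\omega(d^5)$ is needed precisely so that the Sapozhenko-type container error from the defect-classification step is dominated by the $j=2$ cluster-expansion term in this regime.
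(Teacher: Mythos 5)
Your overall architecture (defect polymer model with weights $\lambda^{|\gamma|}/(1+\lambda)^{|N(\gamma)|}$, Koteck\'y--Preiss, truncation with $L_1$ absorbed into the prefactor) matches the paper, but there are two genuine gaps. First, your bridge between $Z(G,\lambda)$ and $(1+\lambda)^{n/2}(\Xi_{\mathcal{O}}+\Xi_{\mathcal{E}})$ points in the wrong direction. By the capture argument (a pigeonhole/Hall argument, the paper's Proposition 3.1), \emph{every} independent set is $\mathcal{O}$-defect or $\mathcal{E}$-defect, so the class of sets that are ``neither'' --- the one you propose to bound by a Sapozhenko-type estimate --- is empty and contributes nothing. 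The actual discrepancy is that $(1+\lambda)^{n/2}(\Xi_{\mathcal{O}}+\Xi_{\mathcal{E}})$ \emph{overcounts}: independent sets whose components are small on \emph{both} sides (e.g.\ the empty set, and more generally all reasonably balanced independent sets) are counted twice. One must show that this doubly-counted mass is $\exp(-\Omega(n/d^{4}))$ relative to $\hat Z$, which is not automatic: it is exactly the statement that under the polymer measure the defect side is typically the minority side. The paper proves this (Lemmas 5.1--5.3) by a cumulant-generating-function bound extracted from the Koteck\'y--Preiss tail inequality with inflated weights, plus a Chernoff bound on the non-defect side; your proposal contains no substitute for this step, and without it the identity you feed into the truncation is unjustified.

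Second, your Koteck\'y--Preiss verification only covers polymers to which (P1) applies, i.e.\ sets of size at most $d^3\log n$, where indeed $w(\gamma)\le \lambda^{|\gamma|}d^{-16|\gamma|}$ beats the $(ed^2)^{|\gamma|}$ enumeration. But polymers range up to closure size $n/4$, and for $|\gamma|>d^3\log n$ the only available expansion is (P2), giving $(1+\lambda)^{-(1+\Omega(1/d))|\gamma|}$, which for $\lambda\asymp \log^2 d/\sqrt d$ cannot compensate the $d^{2|\gamma|}$ count --- the naive sum diverges. This regime is precisely where the paper invokes the container lemma of Jenssen--Malekshahian--Park (its Lemma 2.6), bounding $\sum_{A\in\mathcal{G}^{\mathcal{D}}(a,b)}\lambda^{|A|}(1+\lambda)^{-b}$ for $a\ge d^2$, and it is the only place the hypothesis $\lambda\ge C_0\log^2 d/d^{1/2}$ is needed; it is also why the paper defines polymers via the closure $|[\gamma]|\le n/4$ rather than $|\gamma|\le n/4$. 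Your proposal never applies any container input to the polymer weights (you reserve it for the vacuous ``neither-defect'' class), so the convergence of the cluster expansion --- and hence every bound on $L_j^{\mathcal{D}}$, $\varepsilon_k^{\mathcal{D}}$ downstream --- is not established in the large-polymer regime. The remaining ingredients (exact computation of $L_1$, cluster counting via $nd^{2(j-1)}$, the specialisation $\lambda=1$, $k=1$ for $i(G)$) are fine modulo a small correction: overlapping neighbourhoods force $|N(\bigcup_{S\in\Gamma}S)|\ge j(d-\Delta_2 j)$ rather than $jd$, which is where the co-degree hypothesis enters the $L_j$ bound.
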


Let us make a few remarks about the conditions and conclusions of \Cref{thm:general}. 
It is desirable to approximate the number of independent sets asymptotically, that is up to a multiplicative factor of $(1+o(1))$. If $k \in \mathbb{N}$ is such that $\log_2 n\leq  k d$, then it suffices to consider the first $k$ terms in the expansion. In particular,
    \[
    i(G) = (1+o(1)) \left(2e^{2^{-d}}\right)^{n/2}\sum_{\mathcal{D} \in \{\mathcal{O}, \mathcal{E}\}} \exp\left(\mathbbm{1}_{k\geq 2}\cdot \sum_{j=2}^k L^{\mathcal{D}}_{j}\right),
    \]
    where for $j \leq k$ and $\mathcal{D} \in \{\mathcal{O}, \mathcal{E}\}$, \[|L^{\mathcal{D}}_j|=O\left(\frac{nd^{2(j-1)}}{2^{jd}}\right).\]
Let us discuss briefly, with reference to earlier works, the assumptions and conclusions of \Cref{thm:general}. In comparison to the work of Jenssen, Perkins and Potukuchi \cite{JePePo23}, who obtained a $(1+o(1))$ approximation to $Z(G,\lambda)$ under certain expansion assumptions, our result gives a full expansion of $Z(G,\lambda)$ to arbitrary accuracy, under slightly stronger assumptions on the expansion of the underlying graph. In \cite{JePePo23} there is a payoff between the strength of the expansion and the range of $\lambda$ for which their approximation holds. Namely, if every relevant set expands by at least $(1+\alpha)$ for some constant $\alpha$, then their result holds for $\lambda \geq C_0 \frac{\log d}{d^{1/4}}$, whereas for $\lambda=1$ they require an expansion factor of $(1+\alpha)$ for $\alpha \geq \frac{\log^2 d}{d}$. In fact, by looking carefully at their proofs, it can be seen that their approximation is valid whenever $\alpha \cdot \lambda^2 = \tilde{\Omega}(1/d)$. In comparison, our results hold for the whole range of $\lambda \geq \frac{\log^2 d}{d^{1/2}}$ and $\alpha = \Omega(1/d)$. However, we do require that smaller sets have stronger expansion -- a property that is natural in many commonly studied graph classes, such as product graphs and middle layer graphs. 

This greatly improved range for $\lambda$ is due to a new container lemma by Jenssen, Malekshahian and Park \cite{JeMaPa24}.
In the proof of \Cref{thm:general}, the only place that requires this condition on $\lambda$ is the application of a version of such a container lemma. If an analogous container lemma were to hold for a larger range of $\lambda$, say for $\lambda \geq C_0 \frac{\log d}{d^{c}}$ for some $1/2\le c \le 1$, our proof of \Cref{thm:general} would also hold for such $\lambda$. Indeed, Galvin conjectured that such a container lemma should hold down to $\lambda = \tilde{\Omega}(1/d)$ (see \cite{Ga2011} and the discussion in \Cref{sec:discussion}).

We note that the assumption of \Cref{thm:general} concerning the co-degree
can also be thought of as an expansion assumption. Indeed, it implies that small sets expand almost optimally since, if the co-degree is bounded by $\Delta_2$, then any set $X \subseteq \mathcal{D}$ satisfies
\[
|N(X)| \geq (d-\Delta_2 |X|)|X|.
\]
Furthermore, the co-degree condition also implies that sets which lie inside the neighbourhood of a vertex expand well, which will turn out to play a crucial role in the container lemma.
On the other hand, we ask for much weaker expansion for sets whose size is bounded by
%\footnote{The cutoff of $d^3 \log n$ can be replaced by $\max(d^2 \log n, \omega \cdot d^3)$, where $\omega \to \infty$ arbitrarily slowly. Most of the applications in \Cref{sec:Kotecky} use the former bound, while the proof of \Cref{c:gamma-constant-k} uses the latter.}
$d^3 \log n$ (\Cref{thm:general}(P1)). The expansion required in this regime depends on the choice of $\lambda$. In particular, if $\lambda$ is a constant, then we require only that sets in this regime expand by $\log d$. However, we note that for any $\lambda$ we consider, an expansion by a factor of $\sqrt{d}$ for sets in this regime is sufficient.

For larger sets, our methods require an expansion of at least $(1+\Omega(1/d))$ (\Cref{thm:general}(P2)). 
This assumption is slightly weaker than that of \cite{JePePo23}, namely by a polylog factor, yet our result is more general, giving an expansion of the partition function for any $\lambda \geq C_0 \frac{\log^2 d}{d^{1/2}}$.
Furthermore, the expansion required for large sets (\Cref{thm:general}(P2)) is weaker than what typical examples, such as the hypercube, exhibit. In particular, the hypercube expands by a factor of $(1+\Omega(1/\sqrt{d}))$.

There is also some restriction on the relationship of the parameters $n$ and $d$ in \Cref{thm:general}. In particular, this can be seen in the first expansion property (\Cref{thm:general}(P1)), which concerns sets of size up to $d^3 \log n$. Typically in examples, the size of the graph is at most exponential in its degree, and we require this property only for sets of polynomial size. However, as $n$ grows, we require this level of expansion for larger and larger sets. We note that, this dependence comes from the particular form of the container lemma, see \Cref{l:container}) and comments thereafter.

The class of graphs covered in \Cref{thm:general} includes several well-known graph classes such as the hypercube or the middle layers graph and implies results on the number of independent sets in these graphs \cite{BaGaLi2021, JePe2020}.
Furthermore, we show that \emph{Cartesian product graphs} of connected, regular, bipartite base graphs of bounded size satisfy the assumptions of \Cref{thm:general}. Given a sequence $(H_i)_{i \in [t]}$ of connected graphs, their \emph{Cartesian product} is the graph $G= \car_{i=1}^t H_i$ with vertex set
 \[
  V(G) \coloneqq \prod_{i=1}^t V(H_i) = \{(x_1,\dots, x_t) \mid x_i \in V(H_i) \text{ for all } i \in [t]\}
\]
and edge set
\[
  E(G) \coloneqq \left\{ \bigl\{ x,y \bigr\} \colon \text{ there is some } i \in [t] \text{ such that } \{x_i,y_i\} \in E(H_i) \text{ and } x_j=y_j \text{ for all }j \neq i \right\}.
\]
The graphs $H_i$ are called the \emph{base graphs} of the Cartesian product graph $G= \car_{i=1}^t H_i$.
When $H_1= \dots =H_t = H$, we call $G= \car_{i=1}^t H_i$ the \emph{$t$-th Cartesian power} of $H$.
Note that if the base graphs $H_i$ are bipartite and $d_i$-regular for all $i \in [t]$, then their Cartesian product graph $G= \car_{i=1}^t H_i$ is also bipartite, and it is $d$-regular with $d=\sum_{i=1}^t d_i$.

Cartesian product graphs include many commonly studied classes of lattice-like graphs, such as hypercubes, grids, tori or Hamming graphs, which are all indeed Cartesian powers of an edge, a path, a cycle or a complete graph, respectively.

In order to apply \Cref{thm:general} to Cartesian product graphs, we establish the following vertex-isoperimetric inequalities in this class of graphs, which may be of independent interest.

\begin{theorem}\label{l:isoperimetry}
Let $m \in \mathbb{N}$ and let $(H_i)_{i \in [t]}$ be a sequence of connected, regular, bipartite graphs with $2 \leq |V(H_i)| \leq m$ for all $i \in [t]$. 
Let $G = \car_{i=1}^t H_i$ with bipartition classes $\mathcal{O},\mathcal{E}$. Then, for every $s \in \mathbb{N}$ there exists $c > 0$ such that  for any $X \subseteq \mathcal{O}$ or $X \subseteq \mathcal{E}$, the following hold in $G$.
  \begin{enumerate}[(a)]
  \item\label{i:isosmall} The co-degree of $G$ is at most $m$.
  \item\label{i:isomedium} If $|X| \leq t^s$, then $|N(X)| \geq ct|X|$.
  %The argument in Galvin's paper gives $c = (1+o(1))/(s+3)$; it's just assuming wlog that you're a Hamming ball intersected with a side of the bipartition, and then double counting.
  \item\label{i:isolarge}  If $|X| = \beta |V(G)|/2$ for some $\beta \in (0,1]$, then
    \begin{equation}\label{e:isolarge}
      |N(X)| \geq \left( 1 + \frac{2 \sqrt{2}(1-\beta)}{m \sqrt{t}}\right) |X|.
    \end{equation}
  \end{enumerate}
\end{theorem}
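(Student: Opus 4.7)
The plan is to prove the three parts separately, with part (a) providing the co-degree bound that underpins the other two. For \textbf{(a)}, I would argue directly from the Cartesian-product structure: given distinct $u,v \in V(G)=\prod_i V(H_i)$, any common neighbour $w$ must differ from each of $u,v$ in exactly one coordinate, so $u$ and $v$ themselves agree on all but at most two coordinates. Writing $S=\{i: u_i\neq v_i\}$, a case analysis on $|S|$ gives: for $|S|=1$ (say $S=\{i\}$) the common neighbours are parameterised by common $H_i$-neighbours of $u_i, v_i$, giving at most $d_i-1\le m-1$ choices; for $|S|=2$ there are at most two ``corner'' candidates; for $|S|\ge 3$ there are none. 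Hence $|N(u)\cap N(v)|\le m$.

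For \textbf{(b)}, I would split by the size of $X$. When $|X|\le d/(2m)$, inclusion-exclusion $|N(X)|\ge d|X|-m\binom{|X|}{2}$ combined with $d\ge t$ gives $|N(X)|\ge t|X|/2$ directly. For $|X|$ up to $t^s$, I would apply Cauchy-Schwarz, $|N(X)|\ge(d|X|)^2/\sum_y\deg_X(y)^2$, and bound the right-hand side using (a) applied to pairs of $X$ at Hamming distance $1$ or $2$. The naive application delivers only a constant expansion; to recover a factor of $t$, I would exploit the fibre structure: each $H_i$-fibre (resp. $(H_i \car H_j)$-fibre) carries at most $m/2$ (resp. $m^2/2$) vertices on one side of the bipartition, so the relevant pair counts $\sum_F\binom{|X\cap F|}{2}$ are linearly controlled by the fibre occupancies. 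A final iteration, or an ``unique-neighbour averaged over a random direction'' argument, leverages that for $|X|\le t^s$ most fibres in most directions are nearly empty, yielding the required $|N(X)|\ge c_s t|X|$.

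For \textbf{(c)}, set $Y=\mathcal{E}\setminus N(X)$; the claim is equivalent to $|Y|\le(1-\beta)(n/2)(1-2\sqrt2\beta/(m\sqrt t))$. I would use two ingredients. First, the spectral tensor structure: eigenvalues of $A_G$ are sums of eigenvalues of the $A_{H_i}$, so the second singular value is $\sigma_2=d-\gamma$, where $\gamma=\min_i(d_i-\mu_2(H_i))$ is a positive constant depending only on $m$ (by finiteness of connected bipartite regular graphs on at most $m$ vertices). Second, a fibre-wise non-adjacency relation: since $X\cap F$ and $Y\cap F$ are non-adjacent in $H_i$ for each $H_i$-fibre $F$, we have $|X\cap F|+|Y\cap F|\le m_i/2 - g_i(|X\cap F|)$, where $g_i(k):=\min_{|A|=k}|N_{H_i}(A)|-k$ is the vertex-expansion deficit of $H_i$. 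A single direction gives at best $|N(X)|/|X|\ge 1+\Omega((1-\beta)/t)$; to upgrade to $1/\sqrt t$, I would aggregate the $t$ fibre-wise inequalities via a Cauchy-Schwarz-type combination, trading the linear average for a quadratic one to extract the dimension factor.

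The main obstacle is precisely this final step in (c): standard spectral tools (expander mixing, Cheeger) only deliver an $O(1/t)$ improvement because $\gamma$ does not grow with $t$, and a direct fibre-wise analysis via $g_i$ degenerates when the base graphs are as small as $K_2$ (where $g_i\equiv 0$). In that regime I would fall back on a Harper-style shadow / level-set decomposition with respect to the ``weight function'' $\sum_i\sigma_{H_i}(x_i)$, which concentrates on scale $\sqrt t$ by a central-limit estimate and thereby supplies the required $1/\sqrt t$ slack; integrating this concentration against the fibre-wise bound over the non-degenerate directions is what I expect to be the technical heart of the proof.
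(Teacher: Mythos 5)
Part~(a) of your proposal is correct and essentially matches the paper's argument. Parts~(b) and~(c), however, are incomplete plans rather than proofs, and in both cases the paper takes a quite different and more direct route that you should be aware of.

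For~(b), your Cauchy--Schwarz/inclusion--exclusion plan runs into exactly the difficulty you identify: the naive version only yields constant expansion, and the ``iterate / average over a random direction'' fix is left as a hope rather than an argument. The paper sidesteps all of this with a clean reduction: since each $H_i$ is bipartite and regular, it has a perfect matching $M_i$, and $\square_i M_i$ is a spanning subgraph of $G$ that decomposes into vertex-disjoint copies of the $t$-dimensional hypercube $Q^t$. Writing $X_i = X \cap G_i$ for the intersection of $X$ with each hypercube copy $G_i$, a known vertex-isoperimetric bound for small sets in the hypercube (Galvin's Lemma~6.2) gives $|N_{G_i}(X_i)| \ge ct|X_i|$ whenever $|X_i| \le t^s$, and summing over the disjoint copies immediately yields $|N_G(X)| \ge ct|X|$. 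This is short, unconditional, and avoids any fibre-occupancy bookkeeping; your approach would need to be developed considerably further before it could be checked.

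For~(c), you correctly diagnose that spectral/Cheeger arguments only buy $\Omega((1-\beta)/t)$, since the spectral gap $\gamma$ stays bounded as $t$ grows, and you also correctly note that your fibre-wise deficit $g_i$ degenerates to zero when $H_i = K_2$. Your proposed fallback (a Harper-style level-set decomposition with a CLT-scale concentration giving the $\sqrt{t}$ factor) is the right heuristic for why $1/\sqrt{t}$ should appear, but as written it is not a proof; you never specify how the shadow argument interacts with the non-$K_2$ directions, and the ``integrating this concentration against the fibre-wise bound'' step is precisely where the work would be. The paper instead proceeds by induction on $t$: for the base case $t=1$ it uses Hall's theorem; in the inductive step it slices $X$ along $V(H_t)$ into sets $X_v$ with densities $\beta_v$, and distinguishes two cases. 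If two densities $\beta_v, \beta_w$ differ by at least $\delta := C\beta(1-\beta)/\sqrt{t}$, then summing gains along a $v$--$w$ path in $H_t$ (using $|N(X)_{v_i}| \ge |X_{v_{i-1}}|$ across edges) already produces the required surplus. Otherwise all $\beta_v$ lie within $\delta$ of each other, so Popoviciu's inequality bounds $\frac{1}{|H_t|}\sum_v (\beta_v - \beta)^2 \le \delta^2/4$, and applying the inductive hypothesis to each slice (with its own density $\beta_v$) and summing gives precisely the $\sqrt{(t-1)/t}$ loss one can afford. The key move you are missing is this dichotomy (path gain versus bounded variance), which is what converts the naive $O(1/t)$ gain into $O(1/\sqrt{t})$ without any spectral or Harper-style machinery.
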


In particular, bipartite, regular Cartesian product graphs satisfy the assumptions of \Cref{thm:general}. Note that we prove even stronger isoperimetric properties than those required for the proof of \Cref{thm:general} as we show expansion by $\Theta(t)=\Theta(d)$ for sets of polynomial size and show that all sets of size at most $n/4$ expand by a factor of $(1 + \Omega(d^{-1/2}))$ for those graphs.
Thus, \Cref{thm:general} provides an asymptotic expression for the independence polynomial of any regular bipartite Cartesian product graph and taking $\lambda =1$ leads to an estimate for the number of independent sets as a corollary.

In particular, since every Cartesian product graph of the form in \Cref{l:isoperimetry} satisfies $\log n\leq k d$ for some constant $k$, truncating the expansion in \Cref{thm:general} after $k$ terms leads to a $(1+o(1))$-approximation of $i(G)$.

The computation of the terms $L_j^{\mathcal{D}}$ will be discussed and demonstrated by an example in \Cref{sec:productcomplete}. We derive the first two terms in the expansion of $Z(G, \lambda)$ for the Cartesian product graph of the complete bipartite graph $K_{s, s}$, and use them to give a more accurate estimate on $i(G)$ in this case.
While the terms $L_1, L_2$ are computed by hand, we provide an algorithm to compute further terms in \Cref{sec:computation}.

\begin{theorem}\label{thm:bipartiteproduct}
  Let $s \in \mathbb{N}$ and let $G=\car_{i=1}^t K_{s, s}$. Then
  \[
    i(G)=2^{\frac{(2s)^t}{2}+1} \exp\left( \frac{(2 s)^t}{2^{st+1}} +\frac{(2s)^t}{2^{2st+2}} \left( 3\binom{t}{2} + (2^s-1)(s-1)t - 1\right)+ O\left(\frac{(2s)^{t}(st)^{4}}{2^{3st}}\right)\right).
  \]
\end{theorem}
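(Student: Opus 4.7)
The strategy is to apply \Cref{thm:general} at $\lambda = 1$ with $k = 2$ to $G = \car_{i=1}^t K_{s,s}$ and then explicitly evaluate the first non-trivial term $L_2^{\mathcal{D}}$.

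First, I would verify the hypotheses of \Cref{thm:general}. The graph $G$ is bipartite and $d$-regular with $d = st$ and $n = (2s)^t$ vertices, its bipartition being given by the parity of the colour sum over the canonical 2-colourings of the $K_{s,s}$-factors. Applying \Cref{l:isoperimetry} with $m = 2s$ yields the bounded-co-degree condition and both expansion properties (P1) and (P2); indeed the resulting bounds are substantially stronger than required, as noted in the discussion after \Cref{l:isoperimetry}. The size condition $n = \omega(d^5)$ holds as $t \to \infty$ with $s$ fixed, since $(2s)^t$ grows exponentially in $t$ while $(st)^5$ is polynomial.

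Second, I apply \Cref{thm:general} at $\lambda = 1$ and $k = 2$. Noting that $\bigl(2 e^{1/2^{st}}\bigr)^{(2s)^t/2} = 2^{(2s)^t/2}\exp\!\bigl((2s)^t/2^{st+1}\bigr)$ and that the theorem gives $\varepsilon_2^{\mathcal{D}} = O\bigl((2s)^t(st)^4/2^{3st}\bigr)$, it remains to identify $L_2^{\mathcal{D}}$. The swap automorphism of $K_{s,s}$ induces an automorphism of $G$ exchanging $\mathcal{O}$ and $\mathcal{E}$, so $L_2^{\mathcal{O}} = L_2^{\mathcal{E}} =: L_2$; the sum over $\mathcal{D}$ therefore collapses to $2\exp(L_2 + \varepsilon_2)$, yielding the leading $2^{(2s)^t/2 + 1}$ factor of the theorem.

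Third, and most technically, I compute $L_2$ via the explicit cluster-expansion formula for $L_2^{\mathcal{D}}$ developed in \Cref{sec:productcomplete}. Contributions at this order decompose into a sum over single size-$2$ polymers (pairs $\{u,v\} \subseteq \mathcal{D}$ sharing at least one neighbour, weighted by a factor depending on $|N(u) \cap N(v)|$) plus a two-polymer cluster contribution involving Ursell-function signs, which includes a diagonal self-interaction $\gamma_1 = \gamma_2 = \{v\}$. A direct analysis of the product structure identifies two disjoint types of 2-linked pairs in $\mathcal{D}$: (i) pairs differing in exactly one coordinate with the differing entries on the same side of $K_{s,s}$ (yielding $|N(u) \cap N(v)| = s$), and (ii) pairs differing in exactly two coordinates with both differing entry pairs on opposite sides of $K_{s,s}$ (yielding $|N(u) \cap N(v)| = 2$). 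Counting pairs of each type as explicit functions of $s$ and $t$, substituting into the cluster-expansion formula and simplifying, then yields the claimed expression
\[
L_2 = \frac{(2s)^t}{2^{2st+2}}\left(3\binom{t}{2} + (2^s-1)(s-1)t - 1\right),
\]
with the three summands corresponding respectively to case (ii), case (i), and the diagonal self-interaction.

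The main obstacle is the bookkeeping in step three: one must extract the exact contribution of each pair type from the cluster expansion, combine the single-polymer and two-polymer cluster terms with their Ursell-function coefficients (taking care with the self-interaction contribution), and then simplify the resulting polynomial in $s$ and $t$. The enumeration of pair types is elementary, but the algebraic manipulation into the compact form of the theorem statement is the bulk of the work. The error term inherited from \Cref{thm:general} already matches the desired big-O in \Cref{thm:bipartiteproduct}, so no additional error analysis is required.
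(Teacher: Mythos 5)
Your proposal is correct and follows essentially the same route as the paper's proof in \Cref{sec:productcomplete}: apply \Cref{thm:general} at $\lambda=1$, $k=2$, use the symmetry of $K_{s,s}$ to reduce to a single $L_2$, and compute $L_2$ by classifying $2$-linked sets of size at most $2$ (your types (i), (ii) are exactly the paper's compressed sets $S_3$, $S_2$, and the diagonal is $S_1$). The only cosmetic difference is that the paper organises the enumeration around compressed representatives and embedding counts $\alpha_i$ rather than your direct pair counting, but the resulting decomposition of $L_2$ into its three summands is identical.
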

Note that for $G=\car_{i=1}^t K_{s, s}$, we have $\log_2 n \leq d$ and thus the first term of the expansion suffices to get a bound up to $(1+o(1))$ multiplicatively.

\subsection{Key techniques and proof outline}

The independence polynomial $Z(G,\lambda)$ is the partition function for the measure $\mu$ on the set $\mathcal{I}(G)$ of independent sets in $G$, which is defined in \eqref{measure-mu}.
The hard-core model provides a uniform measure on $\mathcal{I}(G)$ at fugacity $\lambda=1$.
For the $t$-dimensional hypercube, Korshunov and Sapozhenko \cite{KoSa83,Sa1987} showed that a `typical' independent set $I \in \mathcal{I}(Q^t)$ is very \emph{unbalanced} across the partition classes, and this perspective was further developed by Kahn \cite{Ka01} and Galvin \cite{Ga2011}.
One of the key ideas in Jenssen and Perkins' work \cite{JePe2020} was that the distribution of the \emph{defect} vertices, i.e., those vertices that are not contained in the majority partition class, can be approximated by a \emph{defect polymer model}, which they analysed using the \emph{cluster expansion}. For a good point of introduction to the cluster expansion see \cite{Jenssen_2024}.
We follow the same strategy as in \cite{JePe2020} in this paper.

That is, instead of working directly with the measure $\mu$ defined in \eqref{measure-mu}, we will introduce a slightly different measure $\hat{\mu}$ on $\mathcal{I}(G)$ (see \Cref{sec:strategy}).
We can think of $\hat{\mu}$ as choosing an independent set $I \in \mathcal{I}(G)$ by first choosing a partition class $\mathcal{D}$ with a particular bias, which we can think of as the choice of a \emph{defect} partition class, which meets our randomly chosen independent set in far fewer vertices than the other partition class.
We then choose the \emph{defect} set $I\cap \mathcal{D}$ via some probability distribution $\nu^{\mathcal{D}}$ according to a polymer model.
Here the choice of polymers and their weights closely follows the strategy in \cite{JePe2020}.
We then choose the rest of the independent set in the majority side by including each vertex which is not forbidden by the defect set independently with probability $q$, for some judicious choice of $q=\lambda(1+\lambda)^{-1}$.
This choice is motivated by the fact, which can be verified by simple calculation, that this is the conditional distribution of $\mu$ if we condition on the choice of a defect set.
By choosing the right scaling, we can associate to $\hat{\mu}$ a partition function $\hat{Z}(G,\lambda)$, which we can express in terms of the partition function $\Xi^{\mathcal{D}}$ of the polymer model.
Our strategy will be to show that we can approximate $\hat{Z}(G,\lambda)$ quite precisely, and that, in turn, $\hat{Z}(G,\lambda)$ is itself a good approximation for $Z(G,\lambda)$.

In order to approximate $\hat{Z}(G,\lambda)$, we analyse the probability distribution $\nu^{\mathcal{D}}$ arising from a polymer model using the cluster expansion, which gives us an expression for $\log \Xi^\mathcal{D}$ as a formal power series over a set of clusters which have an explicit combinatorial interpretation in terms of the graph $G$.
We use the Koteck\'y--Preiss condition (see \cref{sec:KPcond}) to demonstrate the absolute convergence of this power series for a range of $\lambda$, which allows us to approximate $\Xi^\mathcal{D}$ very precisely using the first few terms of this expansion, which we can then lift to an approximation for $\hat{Z}(G,\lambda)$.
In order to verify the Koteck\'y--Preiss condition, we will need strong control over the isoperimetric properties of the graph $G$ and a container-type lemma bounding the weight of large sets with a particular structure. Recently, Jenssen, Malekshahian and Park~\cite{JeMaPa24} proved such a lemma for regular bipartite graphs with weaker assumptions on $\lambda$ compared to previous results~\cite{Ga2011,Sa1987}, leading to quantitative improvements in applications, including ours.

In fact, the same tools that give us convergence can be used to control the typical behaviour of an independent set drawn according to $\hat{\mu}$ and, using this information, we will be able to show that $\hat{\mu}$ is a good approximation to $\mu$, and deduce that $\hat{Z}(G,\lambda)$ is a good approximation to $Z(G,\lambda)$ (see \Cref{sec:propertiesmu}).
Combining this with the approximation for $\hat{Z}(G,\lambda)$ in terms of first few terms of the cluster expansion establishes Theorem \ref{thm:general}.
Applying this at fugacity $\lambda=1$ then leads to an estimate for the number of independent sets.

Using this approximation, computing more terms of the cluster expansion of $\hat{Z}(G,\lambda)$ allows us to calculate more terms in the expansion of $i(G)$ given in \Cref{thm:general}. In order to compute the $k$-th term, one has to characterise all $2$-linked sets on at most $k$ vertices and calculate their neighbourhood sizes. For $G$ being the product of complete, bipartite graphs and $k=2$ this is explicitly demonstrated in \Cref{sec:productcomplete}. 
generalising this gives an algorithm which runs in time $e^{O(k \log k)}$ and polynomial space (see \Cref{sec:computation}).

\subsection{Structure of the paper}

In \Cref{sec:preliminaries} we collect some notation, probabilistic tools and basics about polymer models and the cluster expansion.
In \Cref{sec:strategy} we set up the specific polymer model we use, define a measure $\hat{\mu}$ to sample independent sets and outline the general strategy in more detail.
In \Cref{sec:Kotecky} we verify the Koteck\'y--Preiss condition in order to conclude that the cluster expansion of $\hat{Z}(G,\lambda)$ converges.
Having obtained convergence of the cluster expansion, we are able to show further properties of the measure $\hat{\mu}$ on independent sets in \Cref{sec:propertiesmu}, which are essential to the proof of \Cref{thm:general} which is given in \Cref{sec:proof}.
\Cref{sec:isoperimetry} is dedicated to the proof of \Cref{l:isoperimetry} regarding the vertex expansion of regular and bipartite product graphs.
In order to demonstrate the computation of an expansion from \Cref{thm:general} we compute the first two terms in the expansion of $Z(G, \lambda)$ for the Cartesian power of complete bipartite graphs in \Cref{sec:productcomplete}.
The algorithmic aspects of computing further terms in more generality are discussed in \Cref{sec:computation}.
We conclude in \Cref{sec:discussion} with a discussion of our results and some future directions.

\section{Preliminaries}\label{sec:preliminaries}

Throughout the paper for ease of presentation we will omit floor/ceiling signs and assume that $n$ is even  whenever necessary.
Unless explicitly stated otherwise, all logarithms have the natural base.

\subsection{Probabilistic tools}
Given a non-negative real random variable $X$ and $r \geq 0$, the \emph{cumulant generating function} $K_X(r)$ is defined as
\[
  K_X(r):= \log \mathbb{E}\left[e^{rX}\right].
\]
A simple application of Markov's inequality yields that for any $a>0$ and $r \geq 0$,
\begin{equation}\label{eq:Markov}
  \mathbb{P}(X \geq a) = \mathbb{P}\left(e^{rX} \geq e^{ra}\right) \leq e^{-ra+K_X(r)}.
\end{equation}

Given two probability measures $\mu$ and $\hat{\mu}$ on a sample space $\Omega$, the \emph{total variation distance} between them is defined by
\begin{equation}\label{eq:totalvariationdist}
  \norm{\hat{\mu}-\mu}_{TV} := \frac{1}{2} \sum_{\omega \in \Omega} \left|\hat{\mu}(\omega) - \mu(\omega)\right| = \sum_{\substack{\omega \in \Omega \\ \hat{\mu}(\omega) > \mu(\omega)}} \hat{\mu}(\omega) - \mu(\omega).
\end{equation}

We also use a standard version of the Chernoff bound, see for example \cite[Appendix A]{Alon2016Book}.
\begin{lemma}\label{l:Chernoff}
  Let $N \in \mathbb{N}$, $0 < p < 1$, and $X \sim \Bin(N,p)$. Then for every $r \geq 0$,
  \[
    \Pr[\big]{|X - Np| \geq r} \; \le \; 2\exp\left( -\frac{2r^2}{N} \right).
  \]
\end{lemma}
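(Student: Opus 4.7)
The plan is to derive this as a direct consequence of Hoeffding's inequality, using the standard Chernoff (exponential moment) method combined with a moment generating function bound for bounded random variables. Since $X \sim \Bin(N,p)$, I would write $X = \sum_{i=1}^N X_i$ where the $X_i$ are i.i.d.\ Bernoulli$(p)$ random variables, and study the centred sum $S := X - Np = \sum_{i=1}^N (X_i - p)$.

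First I would handle the upper tail $\Pr(S \ge r)$. By Markov's inequality applied to $e^{tS}$ for $t > 0$ (i.e., the bound \eqref{eq:Markov} already stated in the paper), one obtains
\[
  \mathbb{P}(S \geq r) \;\leq\; e^{-tr}\,\mathbb{E}\bigl[e^{tS}\bigr] \;=\; e^{-tr}\prod_{i=1}^N \mathbb{E}\bigl[e^{t(X_i - p)}\bigr].
\]
The key step is then to bound each factor using Hoeffding's lemma: if $Y$ is a mean-zero random variable supported in an interval of length $\ell$, then $\mathbb{E}[e^{tY}] \le e^{t^2 \ell^2/8}$. Since $X_i - p$ has mean zero and lies in $[-p, 1-p]$, which has length $1$, this yields $\mathbb{E}[e^{t(X_i - p)}] \le e^{t^2/8}$, and therefore $\mathbb{E}[e^{tS}] \le e^{Nt^2/8}$.

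Substituting back gives $\Pr(S \ge r) \le \exp(-tr + Nt^2/8)$, and optimising in $t$ (choosing $t = 4r/N$) produces $\Pr(S \ge r) \le \exp(-2r^2/N)$. The lower tail $\Pr(S \le -r)$ follows by the identical argument applied to $-S = \sum (p - X_i)$, whose summands also lie in an interval of length $1$ with mean $0$. Finally, a union bound over the two tails introduces the factor of $2$ and yields the stated inequality.

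The only non-routine ingredient is Hoeffding's lemma itself, which is the main technical content; this is typically proved via convexity (writing $e^{ty}$ for $y \in [a,b]$ as a convex combination of $e^{ta}$ and $e^{tb}$ and then estimating the resulting function of $t$ by its Taylor expansion), and is by now a textbook result. Since the paper explicitly cites \cite{Alon2016Book} for this lemma, no further obstacle is expected beyond invoking that standard estimate.
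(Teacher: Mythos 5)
Your proof is correct and is the standard textbook derivation of this form of the Chernoff bound: the paper itself gives no proof but simply cites it (as ``\cite[Appendix A]{Alon2016Book}''), and the argument there is the same exponential-moment method via Hoeffding's lemma that you outline. The optimisation $t = 4r/N$ and the union bound over the two tails both check out, so nothing further is needed.
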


The following inequality due to Popoviciu (see, e.g., \cite{BhatiaDavis}) will also be useful.
\begin{lemma}\label{l:Popoviciu}
  Let $r, R \in \mathbb{R}$.
  If $X$ is a real random variable such that $r \leq X \leq R$, then
  \[
    \operatorname{Var}(X) \leq \frac{(R-r)^2}{4}.
  \]
\end{lemma}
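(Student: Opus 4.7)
The plan is to exploit the well-known variational characterisation of the variance: for any real constant $c$, one has
\[
  \operatorname{Var}(X) \;=\; \mathbb{E}\bigl[(X - \mathbb{E}[X])^2\bigr] \;\leq\; \mathbb{E}\bigl[(X - c)^2\bigr],
\]
since the mean is the minimiser of the mean-squared deviation (this follows by expanding $(X-c)^2 = (X - \mathbb{E}[X])^2 + 2(X - \mathbb{E}[X])(\mathbb{E}[X] - c) + (\mathbb{E}[X]-c)^2$ and taking expectations, which kills the cross term and leaves a non-negative $(\mathbb{E}[X]-c)^2$). This reduces the problem to choosing a single convenient constant $c$.

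Next, I would pick $c$ to be the midpoint of the interval containing $X$, namely $c = (r+R)/2$. Under the assumption $r \leq X \leq R$, the distance from $X$ to this midpoint is deterministically bounded: $|X - c| \leq (R-r)/2$ almost surely. Squaring, $(X-c)^2 \leq (R-r)^2/4$ pointwise, and taking expectations gives $\mathbb{E}[(X-c)^2] \leq (R-r)^2/4$. Combining this with the first display yields the claimed bound $\operatorname{Var}(X) \leq (R-r)^2/4$.

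There is essentially no obstacle here: the argument is a two-line exercise, and the only ``choice'' in the proof is recognising that the midpoint of $[r,R]$ is the right constant to compare against, which is natural because it minimises $\max_{x\in[r,R]}|x-c|$. (Note that $c=(r+R)/2$ does not in general equal $\mathbb{E}[X]$, but that is exactly why the variational inequality in the first step is useful: it lets us upgrade from the unknown mean to a convenient deterministic anchor.) Extremality in Popoviciu's inequality is attained by the two-point distribution placing mass $1/2$ on each of $r$ and $R$, which provides a quick sanity check on the constant $1/4$.
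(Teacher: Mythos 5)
Your proof is correct and complete: the variational characterisation $\operatorname{Var}(X)\le\mathbb{E}[(X-c)^2]$ together with the pointwise bound $|X-(r+R)/2|\le(R-r)/2$ gives the result immediately, and this is the standard textbook argument for Popoviciu's inequality. The paper itself does not supply a proof; it simply cites the inequality, so there is no authorial argument to compare against, but your derivation is the natural one and nothing is missing.
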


\subsection{Graph-theoretical tools}
Let $G$ be a graph. Given a set $X \subseteq V(G)$ we denote by $N_G(X) \coloneqq \{v \in V(G) \setminus X \mid \text{ there exists } w \in X \text{ with } vw \in E(G)\}$ the \emph{(external) neighbourhood} of $X$, and we write $N_G(v)$ for $N_G(\{v\})$.
For a vertex $v \in V(G)$, we denote by $d_G(v)$ the \emph{degree} of $v$, that is, $d_G(v):=|N_G(v)|$. For two vertices $v, w \in V(G)$, their co-degree is given by $|N_G(u) \cap N_G(v)|$. The co-degree of $G$ is given by the maximum co-degree of any two distinct vertices in $G$.
When the graph $G$ is clear from context, we will omit the subscripts.
A graph is said to be \emph{$d$-regular} if all vertices have degree $d$. 
We say a set $S\subseteq V(G)$ is \emph{connected (in $G$)} if the induced graph $G[S]$ is connected.
The \emph{second power} of $G$, which we will denote by $G^2$, is the graph with vertex set $V(G)$ in which vertices are adjacent if they are at distance at most $2$ in $G$.
We say a set $S\subset V(G)$ is \emph{$2$-linked} if it is connected in $G^2$, and we will write $N^2(S)$ for the (external) neighbourhood of a set $S$ in $G^2$.

\subsection{Polymer models and the cluster expansion}\label{s:polymer-models}

Let $\mathcal{P}$ be a finite set of objects, called {\em polymers}, which are equipped with a \emph{weight function} $w \colon \mathcal{P} \to \mathbb{C}$ and an \emph{incompatibility relation} $\nsim$ between polymers that is symmetric and reflexive.
We denote by $\Omega$ the collection of pairwise compatible sets of polymers from $\mathcal{P}$.
The {\em polymer model partition function} is defined by
\[
  \Xi^{\mathcal{P}} \coloneqq \sum_{\PC \in \Omega} \prod_{S \in \PC} w(S),
\]
and is the partition function of the probability measure $\nu$ on $\mathcal{P}$ which chooses a set $\PC$ of pairwise compatible polymers with probability proportional to $\prod_{S \in \PC} w(S)$. When the polymer model $\mathcal{P}$ is clear from context, we will just write $\Xi$.

For a tuple $\Gamma = (S_1, \ldots, S_k)$ of polymers, its \emph{incompatibility graph} $H(\Gamma)$ is the graph with vertex set $[k]$, where distinct $i, j \in [k]$ are connected by an edge if $S_i \nsim S_j$.
A \emph{cluster} is an ordered tuple of polymers whose incompatibility graph is connected.
We denote by $\mathcal{C}$ the set of all clusters and for each cluster $\Gamma\in \mathcal{C}$ we set
\begin{equation}\label{eq:defn_cluster_weight}
  w(\Gamma) \coloneqq \phi(\Gamma) \prod_{S \in \Gamma} w(S),
\end{equation}
where
\begin{equation} \label{eq:Ursell}
  \phi(\Gamma) \coloneqq \frac{1}{|\Gamma|!} \sum_{\substack{A \subseteq E(H(\Gamma)) \\ \text{spanning, connected}}} (-1)^{|A|}
\end{equation}
is the \emph{Ursell function}\footnote{We remark that $\phi(\Gamma) \cdot v(H)! = (-1)^{v(H)-1} T_{H(\Gamma)}(1, 0)$, where $T_H$ denotes the Tutte polynomial of $H$.
Since all coefficients of the Tutte polynomial are non-negative, it holds that $\phi(\Gamma) \geq 0$ whenever $\Gamma$ contains an odd number of polymers, and $\phi(\Gamma) \leq 0$ otherwise, with strict inequality in both cases if $H(\Gamma)$ is connected.} of the incompatibility graph of $\Gamma$.

The {\em cluster expansion} is a formal power series
%the multivariate Taylor series
for $\log \Xi^\mathcal{P}$ in the variables $w(S)$, given by
\begin{equation} \label{eq:clusterexpansion}
  \log \Xi^{\mathcal{P}} = \sum_{\Gamma \in \mathcal{C}} w(\Gamma).
\end{equation}
This equality was first observed by Dobrushin \cite{Dob96}, see also \cite{ScottSokal05}.
Note that, whilst the set $\mathcal{P}$ of polymers is finite, the set $\mathcal{C}$ of clusters will not in general be finite, and so we can gain quantitative information about $\Xi^\mathcal{P}$ from \eqref{eq:clusterexpansion} only if the cluster expansion converges.

\subsection{The Koteck\'y--Preiss condition}\label{sec:KPcond}

In this section we discuss a sufficient condition for convergence of the cluster expansion, which provides quantitative tools that can be used to give strong tail bounds.

For a function $g \colon \mathcal{P} \to [0,\infty)$ we also denote by $g$ the corresponding function from $\mathcal{C}$  to $[0, \infty)$ given by $$g(\Gamma) := \sum_{S \in \Gamma} g(S),$$ for any $\Gamma \in \mathcal{C}$.

For a cluster $\Gamma \in \mathcal{C}$ and a polymer $S\in \Gamma$ we write $\Gamma \nsim S$ if there exists $S' \in \Gamma$ so that $S' \nsim S$.

\begin{lemma} [Koteck\'y--Preiss condition]\label{lem:KP}
  Given a set $\mathcal{P}$ of polymers, let $f \colon \mathcal{P} \to [0,\infty)$ and $g \colon \mathcal{P} \to [0,\infty)$ be two functions.  Suppose that for all polymers $S \in \mathcal{P}$,
  \begin{equation}\label{eqKPcond}
    \sum_{S' \nsim S}  |w(S')| e^{f(S') +g(S')}  \le f(S).
  \end{equation}
  Then, the cluster expansion in \eqref{eq:clusterexpansion} converges absolutely.
  In addition, for all polymers $S \in \mathcal{P}$,
  \begin{equation}\label{eqKPtail}
    \sum_{\substack{\Gamma \in \mathcal{C} \\  \Gamma \nsim S}} \left |  w(\Gamma) \right| e^{g(\Gamma)} \le f(S) \,.
  \end{equation}
\end{lemma}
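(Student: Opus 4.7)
The plan is to prove the tail bound~\eqref{eqKPtail} by induction on a cluster-size truncation, from which absolute convergence of~\eqref{eq:clusterexpansion} will follow directly. For each $n \geq 1$ and each polymer $S$, set
\[
  A_n(S) \coloneqq \sum_{\substack{\Gamma \in \mathcal{C},\, |\Gamma| \leq n \\ \Gamma \nsim S}} |w(\Gamma)|\, e^{g(\Gamma)},
\]
and the target is $A_n(S) \leq f(S)$ for every polymer $S$ and every $n \geq 1$. The base case $n = 1$ follows immediately from the Koteck\'y--Preiss hypothesis~\eqref{eqKPcond}: each singleton cluster $(S')$ with $S' \nsim S$ contributes $|w(S')|\, e^{g(S')}$, and using $e^{f(S')} \geq 1$ the hypothesis bounds the total by $f(S)$.

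The inductive step is where the main combinatorial input enters, namely a tree-graph bound for the Ursell function defined in~\eqref{eq:Ursell}. The cleanest tool is the Penrose--Brydges--Federbush tree inequality
\[
  |\phi(\Gamma)|\, |\Gamma|! \;\leq\; |\{\text{spanning trees of } H(\Gamma)\}|,
\]
refined to a rooted version. The idea is then to root each relevant cluster $\Gamma$ of size at most $n+1$ with $\Gamma \nsim S$ at a polymer $S' \in \Gamma$ with $S' \nsim S$; once rooted, $\Gamma$ decomposes into $S'$ together with a forest of sub-clusters hanging off $S'$, each rooted at a polymer incompatible with $S'$. Applying the inductive hypothesis to these sub-clusters bounds each branch; summing over the choice of forests allows the branch bounds to combine multiplicatively, and the resulting sum-over-forests exponentiates into exactly the factor $e^{f(S')}$ appearing in~\eqref{eqKPcond}. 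Assembled, this yields
\[
  A_{n+1}(S) \;\leq\; \sum_{S' \nsim S} |w(S')|\, e^{f(S') + g(S')} \;\leq\; f(S),
\]
closing the induction. The main obstacle is the combinatorial bookkeeping to make this decomposition precise---in particular, distributing the $|\Gamma|!$ normalisation over the ordered children of $S'$ so that the inductive bounds truly combine multiplicatively over branches---which is exactly the role played by the Penrose tree identity.

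Taking $n \to \infty$ by monotone convergence then yields~\eqref{eqKPtail}. For absolute convergence of the cluster expansion~\eqref{eq:clusterexpansion}, note that $\nsim$ is reflexive and so every cluster $\Gamma$ is incompatible with each of its own polymers. Assigning $\Gamma$ to its first polymer $S_\Gamma$, we obtain
\[
  \sum_{\Gamma \in \mathcal{C}} |w(\Gamma)| \;\leq\; \sum_{S \in \mathcal{P}}\; \sum_{\substack{\Gamma \in \mathcal{C} \\ S_\Gamma = S}} |w(\Gamma)|\, e^{g(\Gamma)} \;\leq\; \sum_{S \in \mathcal{P}} f(S),
\]
which is finite since $\mathcal{P}$ is finite and each $f(S)$ is finite by~\eqref{eqKPcond}.
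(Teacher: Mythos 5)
The paper does not prove this lemma: it is stated as the classical Koteck\'y--Preiss criterion and invoked as a black box (the nearby discussion attributes the cluster expansion identity~\eqref{eq:clusterexpansion} to Dobrushin and Scott--Sokal, and the convergence criterion is likewise cited, not derived). So there is no in-paper argument to compare against; I am evaluating your sketch on its own.

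Your high-level route is the standard one --- induction on a size truncation, with the Penrose tree-graph inequality turning the Ursell function into a sum over rooted trees and the forest decomposition exponentiating into the $e^{f(S')}$ factor. Your base case is fine, your passage $n\to\infty$ by monotone convergence is fine, and your deduction of absolute convergence from the tail bound (via reflexivity of $\nsim$ and pigeonholing on the first polymer of each cluster) is correct. However, there is a genuine gap in the inductive step, and it is not merely omitted bookkeeping: you have chosen the wrong quantity to induct on. Your hypothesis ``$A_n(S') \le f(S')$'' bounds the sum over \emph{unrooted} clusters incompatible with $S'$, but after you root a cluster at $S'$ and decompose it into branches, each branch is a \emph{rooted} tree pinned at some polymer $S'' \nsim S'$, weighted without any Ursell factor. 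To make the branches sum-and-exponentiate into $e^{f(S')}$ you need a bound of the form $T_n(S'') \le |w(S'')|\,e^{f(S'')+g(S'')}$, where $T_n(S'')$ is the sum over rooted trees of size at most $n$ pinned at $S''$. That is a strictly stronger statement than $A_n(S'') \le f(S'')$, and it is not implied by it; conversely, once one has the bound on $T_n$, the bound $A_n(S) \le \sum_{S'\nsim S} T_n(S') \le f(S)$ falls out immediately. So the induction should be run on the pinned tree sums $T_n(\,\cdot\,)$, using the recursion $T_{n+1}(S') \le |w(S')|\,e^{g(S')}\exp\bigl(\sum_{S''\nsim S'} T_n(S'')\bigr)$ together with hypothesis~\eqref{eqKPcond}; the tail bound~\eqref{eqKPtail} is then a corollary, not the inductive statement. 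As written, your induction on $A_n$ does not close.
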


Note, that if the assumptions of the Koteck\'y--Preiss condition hold for some $g \colon \mathcal{P} \to [0,\infty)$, then they also hold for $g=0$.
The benefit of allowing this extra slack in \eqref{eqKPcond} is the improvement in the bounds in \eqref{eqKPtail}, which get stronger as $g$ gets larger. We will use this to control the tails of the cluster expansion.

\subsection{A container lemma for $2$-linked sets}\label{subsec:containers}

Later in the paper, we will consider a polymer model whose polymers are the \emph{$2$-linked} sets in $G$ (see \Cref{sec:strategy}). In this case, in order to obtain quantitative bounds to verify the Koteck\'y--Preiss condition (\Cref{lem:KP}), it will be important to get good bounds on the number of $2$-linked sets.
Note that, for any $d$-regular graph $G$, the maximum degree of the second power $G^2$ is at most $d^2$.
Hence, \cite[Lemma 2]{BFM98} implies the following lemma.

\begin{lemma}\label{l:counting2linked}
  Let $\ell,k \in \mathbb{N}$, let $G$ be a $d$-regular bipartite graph with partition classes $\mathcal{O}, \mathcal{E}$ and let $\mathcal{D} \in \{\mathcal{O}, \mathcal{E}\}$.
  Assume $v \in \mathcal{D}$.
  Then the number of $2$-linked subsets $S \subseteq \mathcal{D}$ of size $\ell$ which contain $v$ is at most $(ed^2)^{\ell-1}$.
\end{lemma}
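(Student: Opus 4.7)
The plan is to reduce the counting of $2$-linked subsets of $\mathcal{D}$ to the standard enumeration of connected subgraphs in a graph of bounded maximum degree, for which the cited result \cite[Lemma 2]{BFM98} is directly applicable. Concretely, let $H \coloneqq G^2[\mathcal{D}]$ be the subgraph of $G^2$ induced by $\mathcal{D}$. By the definition of $G^2$, a subset $S \subseteq \mathcal{D}$ is $2$-linked in $G$ if and only if $S$ is connected in $H$, so it suffices to count connected subsets of $H$ of size $\ell$ containing $v$.

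The next step is to bound $\Delta(H)$. For any $u \in \mathcal{D}$, a vertex $w \in \mathcal{D} \setminus \{u\}$ is adjacent to $u$ in $H$ precisely when $u$ and $w$ share a common neighbour in $\mathcal{E}$, since $G$ is bipartite and every path of length at most $2$ starting in $\mathcal{D}$ ends either in $\mathcal{E}$ (odd length) or in $\mathcal{D}$ (even length). As $u$ has exactly $d$ neighbours in $\mathcal{E}$, each of which has exactly $d$ neighbours in $\mathcal{D}$ (one being $u$ itself), we obtain $|N_H(u)| \leq d(d-1) \leq d^2$. Hence $\Delta(H) \leq d^2$.

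Finally, I would invoke \cite[Lemma 2]{BFM98}, which guarantees that in any graph of maximum degree at most $\Delta$, the number of connected subsets of size $\ell$ containing a fixed vertex is bounded by $(e\Delta)^{\ell-1}$. Applied to $H$ with $\Delta = d^2$ and the designated vertex $v$, this yields the desired bound $(ed^2)^{\ell-1}$.

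There is essentially no obstacle here beyond the bipartite degree computation in the second paragraph; the heavy lifting (the tree-encoding / branching argument producing the $(e\Delta)^{\ell-1}$ bound for connected subsets of bounded-degree graphs) is entirely subsumed by the cited lemma. The only point to be careful about is that $H$ is defined as the subgraph of $G^2$ induced on $\mathcal{D}$ rather than all of $G^2$, which is why the bipartite observation (second neighbours of $u \in \mathcal{D}$ lie in $\mathcal{D}$) is needed to translate between ``$2$-linked in $G$ within $\mathcal{D}$'' and ``connected in $H$''.
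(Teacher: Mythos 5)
Your proposal is correct and essentially the same as the paper's justification: the paper simply notes that $\Delta(G^2) \leq d^2$ for a $d$-regular $G$ and then invokes \cite[Lemma 2]{BFM98}. Your restriction to the induced subgraph $G^2[\mathcal{D}]$ and the slightly sharper bound $d(d-1)$ are harmless refinements; applying the cited lemma directly to $G^2$ already counts every $2$-linked $S \subseteq \mathcal{D}$ containing $v$, so both versions yield the claimed upper bound.
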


We will also need to bound certain `weighted' sums over $2$-linked sets, for which we will use the following `container-type' lemma.
Given a bipartite graph $G$ with partition classes $\mathcal{O}, \mathcal{E}$ and $\mathcal{D} \in \{\mathcal{O}, \mathcal{E}\}$,  the closure of a set $A\subseteq \mathcal{D}$ is defined as
\[
  [A] \coloneqq \{v \in \mathcal{D} \mid N(v) \subseteq N(A)\}.
\]
Clearly $A \subseteq [A]$ and $N(A)=N([A])$.
Given $a,b \in \mathbb{N}$
we define
\begin{equation}\label{def:Gab}
  \mathcal{G}^{\mathcal{D}}(a, b) \coloneqq \{A\subseteq \mathcal{D}: A \text{ is 2-linked,}\quad |[A]|=a \quad \text{and}\quad |N(A)|=b\}.
\end{equation}

The following lemma is a consequence of \cite[Lemma 1.2]{JeMaPa24} and
provides an upper bound on the total weight of all sets in $\mathcal{G}^{\mathcal{D}}(a, b)$ for $a$ large enough.
See \Cref{app:container} for a derivation of \Cref{l:container} from the general version in \cite{JeMaPa24}.

\begin{lemma}[a version of Lemma 1.2 in \cite{JeMaPa24}]\label{l:container} For every $\Delta_2 > 0$ and $c > 0$ there exist $C_0>0$ and $d_0>0$ such that the following holds.
  Let $G$ be a connected, $d$-regular, $n$-vertex, bipartite graph with bipartition classes $\mathcal{O}, \mathcal{E}$ with $d \geq d_0$ and co-degree at most $\Delta_2$. If every $X \subseteq \mathcal{O}$ or $X \subseteq \mathcal{E}$ with $|X|\leq n/4$ satisfies \[ |N(X)| \geq \left(1+\frac{c}{d}\right)|X| \]
  and $\lambda$ is at least $C_0 \frac{\log^2 d}{d^{1/2}}$, then for any $d^2 \leq a \leq n/4$ and any $b \geq (1 + c/d)a$ we have
  \begin{equation}\label{e:hyp:container}
    \sum_{A\in \mathcal{G}^{\mathcal{D}}(a, b)}\frac{\lambda^{|A|}}{(1+\lambda)^b}\leq \frac{n}{2}\exp\left(- \frac{(b-a)\log^2 d}{6d}\right).
  \end{equation}
\end{lemma}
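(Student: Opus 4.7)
The plan is to derive the lemma directly from the more general container lemma of Jenssen, Malekshahian and Park~\cite[Lemma~1.2]{JeMaPa24}; the full derivation, being essentially bookkeeping, is deferred to \Cref{app:container}. The JMP24 statement takes as input a $d$-regular bipartite graph satisfying a uniform vertex-expansion hypothesis (quantified by a parameter $\psi$), a co-degree bound, and a fugacity threshold depending on $\psi$. Its conclusion bounds the $\lambda$-weighted sum over a family of 2-linked sets of prescribed closure and neighbourhood sizes by $\mathrm{poly}(n)\cdot\exp(-\gamma(b-a))$, where the rate $\gamma = \gamma(\psi,d,\lambda)$ depends on the expansion strength and the fugacity.

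I would invoke JMP24 with expansion parameter $\psi = c/d$ and co-degree bound $\Delta_2$, which match our hypotheses directly. The fugacity condition in JMP24 takes the form $\lambda \geq C'\log^2 d / d^{1/2}$ for some $C' = C'(c,\Delta_2)$; taking $C_0 \geq C'$ in our statement covers this, and regularity, bipartiteness and connectedness transfer verbatim. It then remains to translate the JMP24 conclusion into the explicit form stated in the lemma. Under $\lambda \geq C_0 \log^2 d / d^{1/2}$ and $\psi = c/d$ one checks that $\gamma \geq \log^2 d / (6d)$ provided $C_0$ is large enough, and the $\mathrm{poly}(n)$ pre-factor can be reduced to $n/2$: using $a \geq d^2$ together with the expansion lower bound $b - a \geq ca/d \geq cd$, a small fraction of the exponential decay absorbs any extra polynomial factors in $n$, provided $d_0$ is chosen correspondingly large.

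The main obstacle is purely quantitative: keeping careful track of how each JMP24 parameter contributes to $\gamma$ and to the pre-factor, and confirming that $C_0$ and $d_0$ can be chosen uniformly in terms of $c$ and $\Delta_2$ alone. The specific constant $6$ in the denominator of the exponent is an artefact of leaving enough slack to absorb the pre-factor and lower-order error terms; any slightly larger constant would suffice for the downstream applications to the Koteck\'y--Preiss condition. No new ideas are required beyond the translation of parameters, which is why the derivation is relegated to \Cref{app:container}.
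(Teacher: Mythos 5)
Your overall approach — deduce the lemma from the general statement in \cite[Lemma~1.2]{JeMaPa24} by specialising parameters — is indeed what the paper does. However, your description misidentifies which steps require work, and it misses the one step that genuinely does.

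The main omission is the verification of the ``local expansion'' hypothesis of the JMP24 lemma. As stated in the paper's \Cref{app:container}, the general version requires the quantity $m_{\varphi}$ (the minimum of $|N(K)|$ over subsets $K$ of a single vertex's neighbourhood with $|K|>\varphi$) to satisfy $m_\varphi\geq \delta''\varphi d_{\mathcal{D}}$. This is not a hypothesis of \Cref{l:container} and must be deduced from the co-degree bound: for $K\subseteq N(y)$ with $|K|>d/2$, every $x\in N(K)\setminus\{y\}$ has $|N(x)\cap K|\leq \Delta_2$ (else $x,y$ would have too many common neighbours), so double-counting edges between $K$ and $N(K)\setminus\{y\}$ gives $|N(K)|\geq 1+(d-1)|K|/\Delta_2 = \Omega(d^2/\Delta_2)$. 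This is the reason the lemma's hypotheses include the co-degree bound at all, and the paper explicitly flags it as ``a crucial role in the container lemma.'' Your proposal treats the co-degree bound as if it were a direct input parameter of JMP24, which it is not; without the above argument the reduction does not go through.

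Your other claimed adjustments are also off target. The pre-factor in the JMP24 conclusion as used in the paper is already $|\mathcal{D}'|=n/2$, and the constant $1/6$ in the exponential rate appears verbatim in the general statement; neither requires absorbing polynomial factors nor tuning $C_0$. The step that actually requires $a\geq d^2$ is verifying the hypothesis $b-a\geq c_0 d/\log^2 d$ of the JMP24 lemma (via $b-a\geq ca/d\geq cd$), not absorbing a prefactor. The companion hypothesis $b-a\geq \delta' b/d$ also needs checking, which follows from $b\geq(1+c/d)a$ with $\delta'=c/2$. So while the plan of ``specialise JMP24'' is correct, the substance of the derivation is different from what you describe, and the $m_\varphi$ verification is a genuine gap.
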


Note that \Cref{l:container} holds for any $\lambda \geq C_0 \frac{\log^2 d}{d^{1/2}}$, due to a recent improvement by Jenssen, Malekshahian and Park \cite{JeMaPa24} upon previous results in \cite{Ga2011, Sa1987} holding for a smaller range of $\lambda$. This is also reflected in \Cref{thm:general}.
Furthermore, note that applying this lemma introduces a factor of $n/2$ and this is where the dependence of \Cref{thm:general} on the size of the graph is introduced. In particular, a localised version of this container lemma only taking into account the sets in $\mathcal{G}^{\mathcal{D}}(a, b)$ containing a fixed vertex $v$ and removing the factor of $n$ would make \Cref{thm:general} independent of the size of the graph.

\section{Strategy} \label{sec:strategy}

Throughout the paper we fix a graph $G$ that satisfies the conditions of \Cref{thm:general}, in particular, $G$ is a $d$-regular, $n$-vertex bipartite graph with bipartition classes $\mathcal{O}$ and $\mathcal{E}$, i.e.,
\[
  V(G) = \mathcal{O} \cup \mathcal{E}.
\]
We also note that $G$ is balanced, i.e., $|\mathcal{O}|=|\mathcal{E}| = n/2$. All asymptotics are with respect to $d \to \infty$.

We fix the fugacity $\lambda>0$ and for simplicity we let 
\begin{equation}\label{e:hardcore}
Z=Z(G, \lambda)\coloneqq \sum_{I \in \mathcal{I}(G)} \omega(I),
\end{equation}
with $\omega(I)\coloneqq \lambda^{|I|}$. In addition, we fix a partition class $\mathcal{D} \in \{\mathcal{O},\mathcal{E}\}$ and call it the \emph{defect side}.

We call a set $S \subseteq \mathcal{D}$ such that $S$ is $2$-linked and $|[S]| \leq n/4$ a \emph{$\mathcal{D}$-polymer}.
We denote the set of $\mathcal{D}$-polymers by $\mathcal{P}^{\mathcal{D}}$.
The \emph{weight} of a polymer $S \in \mathcal{P}^{\mathcal{D}}$ is given by
\[
  w(S) \coloneqq \frac{\lambda^{|S|}}{(1+\lambda)^{|N(S)|}}.
\]
We will also extend the weight function $w$ to $2$-linked sets $S$ with $|[S]|>n/4$ by setting $w(S) = 0$. 

We say two polymers $S, S'\in \mathcal{P}^{\mathcal{D}}$ are {\em compatible} if $S \cup S'$ is not $2$-linked, i.e., if their neighbourhoods are disjoint.
We call a set $\PC=\{S_1,\ldots,S_{\ell}\}$ of pairwise compatible $\mathcal{D}$-polymers a {\em polymer configuration} in $\mathcal{D}$ and denote by $\Omega^{\mathcal{D}}$ the set of all polymer configurations in $\mathcal{D}$.

Configurations of $2$-linked sets are closely related to independent sets. Indeed, if we write $I^{\mathcal{D}} \coloneqq I \cap \mathcal{D} = \bigcup_{i=1}^{\ell} S_i \; \subseteq \mathcal{D}$, then a configuration $\PC=\{S_1,\ldots,S_{\ell}\}$ of $2$-linked sets is determined by $I^{\mathcal{D}}$ (by taking the maximal $2$-linked components of $I^{\mathcal{D}}$) and vice versa.
In what follows we will, in an abuse of notation, write $I^{\mathcal{D}} \in \Omega^\mathcal{D}$ to mean that there is a polymer configuration $\PC \in \Omega^{\mathcal{D}}$ such that $I^{\mathcal{D}} = \bigcup_{S \in \PC} S$.

Moreover, for each independent set $I$ in $\mathcal{I}(G)$, the intersection of $I$ with at least one of the bipartition classes gives rise in this way to a polymer configuration.
\begin{proposition}\label{prop:Icaptured}
For each $I \in \mathcal{I}(G)$ we have either $I \cap \mathcal{O} \in \Omega^{\mathcal{O}}$ or $I \cap \mathcal{E} \in \Omega^{\mathcal{E}}$.
\end{proposition}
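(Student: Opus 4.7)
The plan is to argue by contradiction: assuming that neither $I \cap \mathcal{O}$ nor $I \cap \mathcal{E}$ yields a polymer configuration, I would extract a violation of the vertex-expansion property (P2) from Theorem~\ref{thm:general}.

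First I would unpack what $I \cap \mathcal{D} \in \Omega^{\mathcal{D}}$ means. The maximal $2$-linked subsets of $I \cap \mathcal{D}$ are automatically pairwise compatible, since distinct maximal $2$-linked subsets share no common neighbour. Hence $I \cap \mathcal{D} \in \Omega^{\mathcal{D}}$ is equivalent to every such maximal $2$-linked component $S$ satisfying $|[S]| \leq n/4$. Thus failing to lie in $\Omega^{\mathcal{D}}$ means some maximal $2$-linked component $S \subseteq I \cap \mathcal{D}$ has $|[S]| > n/4$.

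Assuming both $I \cap \mathcal{O} \notin \Omega^{\mathcal{O}}$ and $I \cap \mathcal{E} \notin \Omega^{\mathcal{E}}$, I would obtain $2$-linked components $S_1 \subseteq I \cap \mathcal{O}$ and $S_2 \subseteq I \cap \mathcal{E}$ with $|[S_1]|, |[S_2]| > n/4$. The crucial step is to show $[S_2] \cap N(S_1) = \emptyset$: if $v \in \mathcal{E}$ lay in both, then $v$ would be adjacent to some $u \in S_1$, and by definition of the closure $u \in N(v) \subseteq N(S_2)$, so $u \in S_1 \subseteq I$ would be adjacent to a vertex of $S_2 \subseteq I$, contradicting independence of $I$. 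Since $[S_2], N(S_1) \subseteq \mathcal{E}$ and $|\mathcal{E}| = n/2$, this forces $|N(S_1)| < n/2 - n/4 = n/4$.

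Finally, I would select any subset $X \subseteq [S_1]$ with $|X| = n/4$ (possible since $|[S_1]| > n/4$). Using that $N([S_1]) = N(S_1)$ by definition of closure, we get $|N(X)| \leq |N(S_1)| < n/4$. On the other hand, (P2) applied to $X$ gives $|N(X)| \geq (1 + \Omega(1/d))|X| > n/4$, the required contradiction. I do not foresee any substantial obstacle here: beyond carefully organising the closure bookkeeping, the argument is a one-shot application of (P2) together with the bipartite independence of $I$.
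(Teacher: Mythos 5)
Your proof is correct, and the core closure--adjacency step --- that a vertex $v \in [S_2] \cap N(S_1)$ would be adjacent to some $u \in S_1$, and that $u \in N(v) \subseteq N(S_2)$ then forces an edge between $S_1$ and $S_2$, contradicting the independence of $I$ --- is precisely the argument in the paper, just stated in the contrapositive (you deduce disjointness of $[S_2]$ and $N(S_1)$, whereas the paper deduces a contradiction from a nonempty intersection). The one substantive difference is how the lower bound on the neighbourhood size is obtained: you pass to a subset $X \subseteq [S_1]$ of size exactly $n/4$ and invoke the expansion hypothesis (P2) to get $|N(X)| > n/4$, whereas the paper simply observes that a $d$-regular bipartite graph has a perfect matching by Hall's theorem, so $|N(S_1)| = |N([S_1])| \geq |[S_1]| > n/4$ directly. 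Both steps are valid under the standing hypotheses, but the paper's route is a little more economical: the proposition holds for any regular bipartite graph and does not actually need the expansion assumptions, which your appeal to (P2) appears to require.
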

\begin{proof}
  Assume there exists $I \in \mathcal{I}(G)$ such that the claim does not hold.
  Then, there is a $2$-linked component $S \subseteq I \cap \mathcal{O}$ such that $|[S]| > \frac{n}{4}$ and a $2$-linked component $S' \subseteq I \cap \mathcal{E}$ such that $|[S']| > \frac{n}{4}$.
  
  Since $G$ is $d$-regular and bipartite, using Hall's condition (Theorem 1 in \cite{Hall35}) it contains a perfect matching. Thus, we have $|N(S)|= |N([S])| \geq |[S]| > \frac{n}{4}$. 
  Recalling $|\mathcal{E}|=\frac{n}{2}$ and $N(S) \cup [S'] \subseteq \mathcal{E}$, we obtain $N(S) \cap[S'] \neq \varnothing$ by the pigeonhole principle. So there is a vertex $v$ in $[S']$ that is a neighbour of  a vertex $u$ in $S$. But by definition of the closure, each neighbour of a vertex in $[S']$, such as $u$, is also the neighbour of some vertex in $S'$ (see also \Cref{fig:proofillustration}).  Thus, there is an edge between $S$ and $S'$, contradicting the fact that $I$ is an independent set.   
\end{proof}
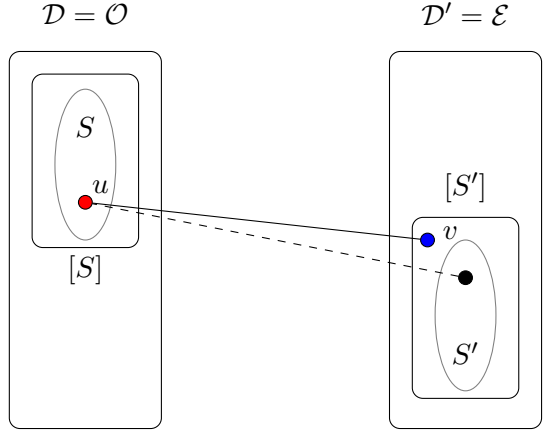
\begin{figure}
  \centering
  \begin{tikzpicture}[scale=1]
    \path[draw=black,rounded corners] (-1,-4) -- (-1,1) -- (1,1) -- (1,-4) -- cycle;
    \path[draw=black,rounded corners] (4,-4) -- (4,1) -- (6,1) -- (6,-4) -- cycle;
    \node[] at (0,1.5) {$\mathcal{D}=\mathcal{O}$};
    \node[] at (5,1.5) {$\mathcal{D}'=\mathcal{E}$};

    \draw[gray, rotate=90] (-0.5,0) ellipse (1cm and 0.4cm);
    \node[] at (0,0) {$S$};
    \path[draw=black,rounded corners] (-0.7,-1.6) -- (0.7,-1.6) -- (0.7,0.7) -- (-0.7,0.7) -- cycle;
    \node[] at (0,-1.9) {$[S]$};
    \draw[gray, rotate=90] (-2.5,-5) ellipse (1cm and 0.4cm);
    \node[] at (5,-3) {$S'$};
    \path[draw=black,rounded corners] (4.3,-3.6) -- (5.7,-3.6) -- (5.7,-1.2) -- (4.3,-1.2) -- cycle;
    \node[] at (5,-0.8) {$[S']$};

    \node (X) at (4.5,-1.5) [circle,draw, fill=blue, scale=0.5] {};
    \node (Y) at (0,-1) [circle,draw, fill=red, scale=0.5] {};
    \node[] at (.2,-0.8) {$u$};
    \node[] at (4.8,-1.4) {$v$};
    \path[draw=black] (X) -- (Y);

    \node (Z) at (5,-2) [circle,draw, fill=black, scale=0.5] {};
    \path[draw=black, dashed] (Z) -- (Y);
  \end{tikzpicture}
  \caption{Illustration of the proof of \Cref{prop:Icaptured}.
  There is a (blue) vertex $v$ in $N(S) \cap [S']$ and a (red) vertex $u$ in $S$ whose neighbour is the vertex $v$. By definition of $[S']$ there exists a (dashed) edge between $S'$ and $S$.}
  \label{fig:proofillustration}
\end{figure}

The $\mathcal{D}$-polymer model (we will write $\Xi^{\mathcal{D}}$ for $\Xi^{\mathcal{P}^{\mathcal{D}}}$) partition function is given by
\begin{equation}\label{e:polymodelpartitionfunction}
  \Xi^{\mathcal{D}}\coloneqq  \sum_{\PC \in \Omega^{\mathcal{D}}} \prod_{S \in \PC} w(S),
\end{equation}
and there is a corresponding measure $\nu^{\mathcal{D}}$ on $\Omega^{\mathcal{D}}$.  For $\sigma \in \Omega^{\mathcal{D}}$ this is given by
\begin{equation} \label{eq:nu}
  \nu^{\mathcal{D}}(\PC) \coloneqq \frac{1}{\Xi^{\mathcal{D}}}{\prod_{S \in \PC} w(S)}.
\end{equation}
Note that, depending on the structure of the graph $G$, the polymer model partition functions $\Xi^{\mathcal{O}}$ and $\Xi^{\mathcal{E}}$ might differ.

As $\nu^{\mathcal{D}}$ is a measure on the set of all configurations of $2$-linked sets, we may think of $\nu^\mathcal{D}$ as a measure on $\{0,1\}^{\mathcal{D}}$, meaning that $\nu^{\mathcal{D}}$ takes value zero if $I^\mathcal{D} \notin \Omega^{\mathcal{D}}$ and the value given above if $I^\mathcal{D} \in \Omega^{\mathcal{D}}$, where we associate each $I^{\mathcal{D}}$ to a configuration of $2$-linked sets as above.
Hence, in what follows we will, in an abuse of notation, write $\nu^\mathcal{D}(I^{\mathcal{D}}) = \nu^\mathcal{D}(\PC)$ if $I^{\mathcal{D}}= \bigcup_{S \in \PC} S$ for some $\PC \in \Omega^{\mathcal{D}}$.
Recall that each subset $I^{\mathcal{D}} \subseteq \mathcal{D}$ is an independent set, since the graph $G$ is bipartite, so $\nu^{\mathcal{D}}$ is a measure on a subset of $\mathcal{I}(G)$.
Given $I^{\mathcal{D}} \in \Omega^{\mathcal{D}}$ associated with a $\mathcal{D}$-polymer configuration $\PC \in \Omega^{\mathcal{D}}$ we get
\begin{equation}\label{e:measurevuD}
  \nu^\mathcal{D}(I^{\mathcal{D}})=\nu^{\mathcal{D}}(\PC) = \frac{1}{\Xi^\mathcal{D}}\prod_{S \in \PC} w(S) = \frac{1}{\Xi^\mathcal{D}}\prod_{S \in \PC} \frac{\lambda^{|S|}}{(1+\lambda)^{|N(S)|}} = \frac{1}{\Xi^\mathcal{D}}\cdot \frac{\lambda^{|I^{\mathcal{D}}|}}{(1+\lambda)^{|N(I^{\mathcal{D}})|}},
  \end{equation}
where the last equality follows since $\sum_{S \in \PC}|S|=|I^{\mathcal{D}}|$ and $\sum_{S \in \PC}|N(S)|=|N(I^{\mathcal{D}})|$.

We define the measure $\hat{\mu}^*$ on $\{\mathcal{O}, \mathcal{E}\} \times \mathcal{I}(G)$ by choosing $(\mathcal{D}, I)$ as follows:
\begin{enumerate}[label=(S\arabic*),ref=S\arabic*]
\item\label{defect-side} pick the \emph{defect} side $\mathcal{D} \in \{\mathcal{O},\mathcal{E}\}$ with probability proportional to $\Xi^{\mathcal{D}}$ and let $\mathcal{D}'$ be the other partition class;
\item\label{defect-step} choose a \emph{defect configuration} $I^{\mathcal{D}}$ according to the probability measure $\nu^\mathcal{D}$ (given in \eqref{e:measurevuD});
\item\label{non-defect-side} choose a set $I^{\mathcal{D}'}$ by including each vertex of $\mathcal{D}' \setminus N(I^\mathcal{D})$ independently with probability $q:=\frac{\lambda}{1+\lambda}$.
\end{enumerate}
This procedure provides two sets $I^{\mathcal{D}} \subseteq \mathcal{D}$ and $I^{\mathcal{D}'} \subseteq \mathcal{D}'$. Setting $I \coloneqq I^{\mathcal{D}} \cup I^{\mathcal{D}'}$ we observe that $I$ is indeed an independent set in $G$.
Furthermore, by \Cref{prop:Icaptured} for every $I \in \mathcal{I}(G)$ at least one of the pairs, $(\mathcal{O}, I)$ or $(\mathcal{E}, I)$, has non-zero measure under $\hat{\mu}^*$.
Hence, for $I \in \mathcal{I}(G)$ we will write 
\[
I^{\mathcal{D}} \coloneqq I \cap \mathcal{D} \quad \text{and}\quad I^{\mathcal{D}'} \coloneqq I \cap \mathcal{D}'.
\]

\begin{figure}
  \centering
  \begin{tikzpicture}[scale=1]
    \path[draw=black,rounded corners] (-1,-4) -- (-1,1) -- (1,1) -- (1,-4) -- cycle;
    \path[draw=black,rounded corners] (4,-4) -- (4,1) -- (6,1) -- (6,-4) -- cycle;
    \node[] at (0,1.5) {$\mathcal{D}=\mathcal{O}$};
    \node[] at (5,1.5) {$\mathcal{D}'=\mathcal{E}$};

    \node (Z1) at (0.5,-1.1) [circle,draw, fill=blue!20, scale=0.5] {};
    \node (Z2) at (0.1,-1.55) [circle,draw, fill=blue!20, scale=0.5] {};
    \node (Z3) at (0.4,-2.2) [circle,draw, fill=blue!20, scale=0.5] {};
    \path[draw=gray,rounded corners] (-0.2,-2.5) -- (-0.2,-0.8) -- (0.9,-0.8) -- (0.9,-2.5) -- cycle;
    \node[] at (0.4,-2.8) {$I^{\mathcal{D}}$};

    \path[draw=gray] (0.7,-0.8) -- (5,-0.5);
    \path[draw=gray] (0.7,-2.5) -- (5,-3.5);
    \draw[gray, rotate=90] (-2,-5) ellipse (1.5cm and 0.4cm);

    \node (Z1) at (4.8,0.5) [circle,draw, fill=red!20, scale=0.5] {};
    \node (Z1) at (5.6,0) [circle,draw, fill=red!20, scale=0.5] {};
    \node (Z1) at (5.3,-3.7) [circle,draw, fill=red!20, scale=0.5] {};
    \node (Z1) at (4.3,-2) [circle,draw, fill=red!20, scale=0.5] {};
    \node (Z1) at (5.5,-1) [circle,draw, fill=red!20, scale=0.5] {};
    \node (Z1) at (5.7,-1.5) [circle,draw, fill=red!20, scale=0.5] {};
    \node (Z1) at (4.7,-0.2) [circle,draw, fill=red!20, scale=0.5] {};
  \end{tikzpicture}
  \caption{The defect configuration $I^{\mathcal{D}}= I \cap \mathcal{D}$ is chosen according to $\nu^{\mathcal{D}}$.
    In $\mathcal{D}'$ vertices in the neighbourhood of $I^{\mathcal{D}}$ are blocked.
    $I \cap \mathcal{D}'$ includes every unblocked vertex, i.e., vertices in $\mathcal{D}' \setminus N(I^{\mathcal{D}})$, independently with probability $\frac{\lambda}{1+\lambda}$.}
\end{figure}
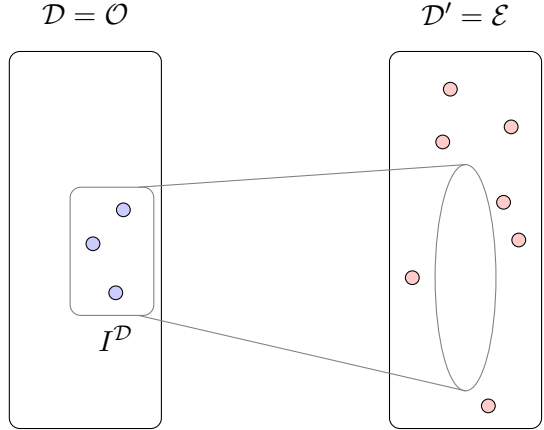

Using \eqref{defect-side}, \eqref{defect-step}, \eqref{non-defect-side}, we can write the measure $\hat{\mu}^*$ explicitly as
\begin{align*}
\hat{\mu}^*(\mathcal{D}, I) &= \frac{\Xi^{\mathcal{D}}}{\Xi^{\mathcal{O}} + \Xi^{\mathcal{E}}} \cdot \nu^{\mathcal{D}}(I^\mathcal{D}) \cdot q^{|I^{\mathcal{D}'}|}\left(1 -q\right)^{n/2 - |N(I^\mathcal{D})| - |I^{\mathcal{D}'}|}.
\end{align*}
Substituting $q = \frac{\lambda}{1+\lambda}$, applying \eqref{e:measurevuD} (using that $\nu^{\mathcal{D}}\left(I^{\mathcal{D}}\right)$ is zero if $I^\mathcal{D} \notin \Omega^{\mathcal{D}}$) and simplifying leads to the following expression
\begin{align}
\hat{\mu}^*(\mathcal{D}, I)&= \frac{\Xi^{\mathcal{D}}}{\Xi^{\mathcal{O}} + \Xi^{\mathcal{E}}} \cdot \frac{\mathbbm{1}_{\{I^\mathcal{D} \in \Omega^{\mathcal{D}}\}}\lambda^{|I^\mathcal{D}|}}{\Xi^\mathcal{D}(1+\lambda)^{|N(I^\mathcal{D})|}} \left( \frac{\lambda}{1+\lambda}\right)^{|I^{\mathcal{D}'}|}\left(\frac{1}{1+\lambda}\right)^{n/2 - |N(I^\mathcal{D})| - |I^{\mathcal{D}'}|} \nonumber\\
&=\left(1+ \lambda \right)^{-n/2}\left(\frac{\mathbbm{1}_{\{I^\mathcal{D} \in \Omega^{\mathcal{D}}\}}}{\Xi^{\mathcal{O}} + \Xi^{\mathcal{E}}}\right) \lambda^{|I|}.\label{eq:muhatstar}
\end{align}
We define the measure $\hat{\mu}$ on $\mathcal{I}(G)$ by
\begin{equation*}
  \hat{\mu}(I) := \hat{\mu}^*(\mathcal{O}, I) + \hat{\mu}^*(\mathcal{E}, I).
\end{equation*}
If we define the weight $\hat{\omega}$ of $I\in \mathcal{I}(G)$ by
\begin{equation}\label{e:hatweight}
  \hat{\omega}(I) := \left(\mathbbm{1}_{\{I^\mathcal{O} \in \Omega^{\mathcal{O}}\}} + \mathbbm{1}_{\{I^\mathcal{E} \in \Omega^{\mathcal{E}}\}}\right) \lambda^{|I|},
\end{equation}
then we can think of the measure $\hat{\mu}$ as arising from a different partition function
\begin{equation}\label{e:hatZdef}
\hat{Z} = \hat{Z}(G,\lambda) \coloneqq \sum_{I \in \mathcal{I}(G)} \hat{\omega}(I) = (1+\lambda)^{n/2}\left( \Xi^{\mathcal{O}} +  \Xi^{\mathcal{E}}\right),
\end{equation}
where we used the fact that for any $I^\mathcal{D} \subseteq \mathcal{D}$
\[
  \sum_{\substack{I \in \mathcal{I}(G) \\ I \cap \mathcal{D} = I^{\mathcal{D}}}} \lambda^{|I|} = \lambda^{|I^{\mathcal{D}}|}\sum_{I^{\mathcal{D}'} \subseteq \mathcal{D}' \setminus N(I^\mathcal{D})} \lambda^{|I^{\mathcal{D}'}|} = (1+\lambda)^{n/2} \frac{\lambda^{|I^{\mathcal{D}}|}}{(1+\lambda)^{|N(I^\mathcal{D})|}}.
\]

Let us compare then the two measures $\mu$ and $\hat{\mu}$ by comparing their partition functions $Z$ and $\hat{Z}$. Given $I \in \mathcal{I}(G)$, \Cref{prop:Icaptured} implies that either $I^\mathcal{O} \in \Omega^{\mathcal{O}}$ or $I^\mathcal{E} \in \Omega^{\mathcal{E}}$ (or both), and we say that $I \in \mathcal{I}(G)$ is \emph{captured by the $\mathcal{D}$-polymer model} if $I^\mathcal{D} \in \Omega^{\mathcal{D}}$.

Hence, recalling that in the hard-core model we have $\omega(I)=\lambda^{|I|}$ (see \eqref{e:hardcore}), it follows by \eqref{e:hatweight} that $\hat{\omega}(I)$ is either equal to $\omega(I)$ or exactly $2 \cdot \omega(I)$, depending on whether $I$ is captured by a single polymer model or both. In other words, the weights $\omega$ and $\hat{\omega}$ agree for all independent sets that are captured by exactly one of the polymer models, whereas $\hat{\omega}$ is twice $\omega$ on all independent sets that are captured by both polymer models.

Our strategy is then twofold: We will approximate $\hat{Z}$ using the \emph{cluster expansion} of $\Xi^\mathcal{D}$ and then bound $\bigl| \log Z - \log \hat{Z}\bigr|$ by controlling the likelihood that an independent set chosen according to $\hat{\mu}$ is captured by both polymer models. In fact, it will turn out that the cluster expansion is useful for this second step as well.

Recall that a cluster is defined as an ordered tuple of polymers whose incompatibility graph is connected. Hence, a cluster $\Gamma$ consists of a tuple of $2$-linked sets whose union is $2$-linked.
We denote by $\mathcal{C}^{\mathcal{D}}$ the set of all clusters.
By \eqref{eq:clusterexpansion} we have the `formal' equality
\[
  \log \Xi^{\mathcal{D}} = \sum_{\Gamma \in \mathcal{C}^{\mathcal{D}}} w(\Gamma).
\]
We hope to understand $\Xi^\mathcal{D}$ via this cluster expansion, and our first step will be to verify the Koteck\'y--Preiss condition for $\Xi^\mathcal{D}$, which we do in \Cref{sec:Kotecky}. Not only will this allow us to verify that the cluster expansion converges, we will also be able to get quantitative bounds on how quickly the tails shrink, and hence be able to quantitatively estimate $\log \Xi^{\mathcal{D}}$ by a truncation of the cluster expansion.

\begin{figure}
  \centering
  \begin{tikzpicture}
    \path[draw=black,rounded corners] (-1,-4) -- (-1,1) -- (1,1) -- (1,-4) -- cycle;
    \path[draw=black,rounded corners] (4,-4) -- (4,1) -- (6,1) -- (6,-4) -- cycle;
    \node[] at (0,1.5) {$\mathcal{D}=\mathcal{O}$};
    \node[] at (5,1.5) {$\mathcal{E}$};

    \draw[blue, rotate=20] (-0.5,-1.2) ellipse (0.8cm and 0.3cm);
    \draw[red, rotate=140] (-1.2, 1.5) ellipse (0.9cm and 0.3cm);
    \draw[color=orange](0,-3) circle (0.2);
    \draw[](0,-3) circle (0.3);
    \node (Z1) at (0.5,-1.1) [circle,draw, fill, scale=0.5] {};
    \node (Z2) at (-0.5,-1.5) [circle,draw, fill, scale=0.5] {};
    \node (Z3) at (0.4,-2.3) [circle,draw, fill, scale=0.5] {};
    \node (Z4) at (0,-3) [circle,draw, fill, scale=0.5] {};
    \path[draw=gray,rounded corners] (-0.9,-3.5) -- (-0.9,-0.8) -- (0.9,-0.8) -- (0.9,-3.5) -- cycle;
    \node[] at (-0.3,-0.5) {$\Gamma$};

    \node (X1) at (5, -1.3) [circle,draw, fill, scale=0.5] {};
    \node (X2) at (5,-1.9) [circle,draw, fill, scale=0.5] {};
    \node (X3) at (5,-2.65) [circle,draw, fill, scale=0.5] {};
    \path[draw=black] (Z1) -- (X1) -- (Z2) -- (X2) -- (Z3) -- (X3) -- (Z4);

    \path[draw=black] (-2.5,1.5) -- (-2.5,-4);
    \node (A1) at (-4,-1) [circle,draw=blue, fill=blue, scale=0.5] {};
    \node (A2) at (-4,-2) [circle,draw=red, fill=red, scale=0.5] {};
    \node (A3) at (-4,-3) [circle,draw=orange, fill=orange, scale=0.5] {};
    \node (A4) at (-5,-3) [circle,draw, fill, scale=0.5] {};
    \path[draw=black] (A1) -- (A2) -- (A3) -- (A4) -- (A2);
    \node (A) at (-4,-3.5) {$H(\Gamma)$};
  \end{tikzpicture}
  \caption{A cluster in $G$ with defect side $\mathcal{O}$.
    The cluster $\Gamma$ depicted here consists of two polymers of size $2$ and twice the same polymer of size one.
    The incompatibility graph $H(\Gamma)$ is depicted on the left side.}\label{fig:cluster}
\end{figure}
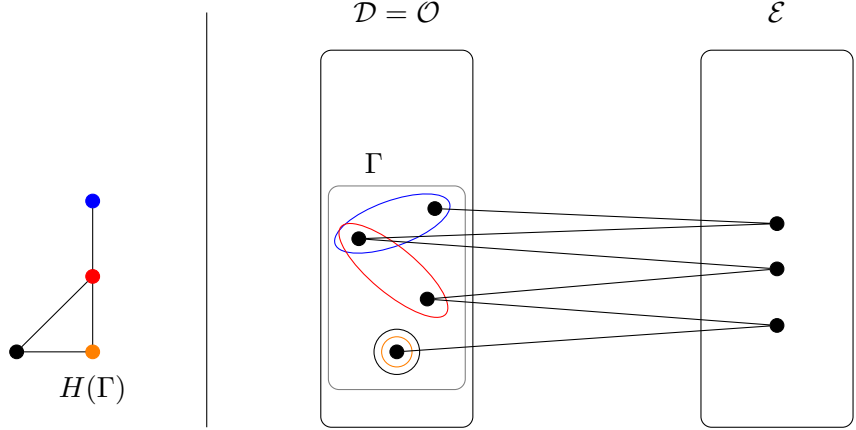

To that end, we define the \emph{size} of a cluster $\Gamma \in \mathcal{C}^{\mathcal{D}}$ to be \[\norm{\Gamma} \coloneqq \sum_{S \in \Gamma} |S|.\]
For $k \in \mathbb{N}$ we let $\mathcal{C}^{\mathcal{D}}_{k}$ be the set of all clusters $\Gamma$ of size $\norm{\Gamma}=k$ and define $\mathcal{C}^{\mathcal{D}}_{< k}$, $\mathcal{C}^{\mathcal{D}}_{\leq k}$, $\mathcal{C}^{\mathcal{D}}_{\geq k}$ and $\mathcal{C}^{\mathcal{D}}_{> k}$ analogously.
Letting
\[
  L^{\mathcal{D}}_{k} \coloneqq \sum_{\Gamma \in \mathcal{C}^{\mathcal{D}}_{k} }w(\Gamma),
\]
denote the $k$-th term of the cluster expansion, we may also define $L^{\mathcal{D}}_{< k}$, $L^{\mathcal{D}}_{\leq k}$, $L^{\mathcal{D}}_{\geq k}$ and $L^{\mathcal{D}}_{> k}$ analogously. 

By~\eqref{eq:clusterexpansion}, truncating the expansion of $\log \Xi^{\mathcal{D}}$ to clusters of size less than $k$ incurs an error of $L^{\mathcal{D}}_{\geq k}$.
Verifying the Koteck\'y--Preiss condition for an explicit choice of $f$ and $g$ will allow us to bound $L^{\mathcal{D}}_{\geq k}$ (see \Cref{l:Kotecky}).
Furthermore, by taking $g$ large enough, we will be able to use the extra information in \eqref{eqKPtail} to control the typical properties of an independent set $I$ drawn according to $\hat{\mu}$ (see \Cref{l:minoritydefect}), and in this way we can bound the difference $\bigl| \log Z - \log \hat{Z}\bigr|$ (see \Cref{l:approximateZ}).
Combining these two estimates will allow us to approximate $Z$ using a truncation of the cluster expansion.

\section{Verifying the Koteck\'y--Preiss condition} \label{sec:Kotecky}

We start by bounding the truncation error $L^{\mathcal{D}}_{\geq k}$. Recall that the co-degree of $G$, the graph whose independent sets are being studied, is at most $\Delta_2$.

\begin{lemma} \label{l:Kotecky}
  Let $k \in \mathbb{N}$ and $C_0 \frac{ \log^2 d}{d^{1/2}} \leq \lambda \leq C_0$ for some sufficiently large constant $C_0>0$. Then
  \[
    |L^{\mathcal{D}}_{\geq k}| \leq \sum_{\Gamma \in \mathcal{C}^{\mathcal{D}}_{\geq k}}|w(\Gamma)| \leq \frac{nd^{8k}}{(1+ \lambda)^{dk - \Delta_2 k^2}}.
  \]
\end{lemma}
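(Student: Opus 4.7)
The plan is to apply the Koteck\'y--Preiss condition (\Cref{lem:KP}) with $f(S) = c|S|$ for a small constant $c = \Theta(\log^2 d/d^2)$ and
\[
g(S) = \max\bigl(|N(S)|\log(1+\lambda) - 8|S|\log d,\; 0\bigr),
\]
and then extract the $k$-dependent bound from the tail estimate. The function $g$ is engineered so that $|w(S)|e^{g(S)} = (\lambda/d^8)^{|S|}$ whenever $|N(S)|\log(1+\lambda) \geq 8|S|\log d$, which is the case for every polymer of size at most $d^3\log n$ by hypothesis (P1) combined with the standard bound $\log(1+\lambda)/\min(\lambda, 1) \geq 1/2$.

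To verify $\sum_{S' \nsim S}|w(S')|e^{f(S')+g(S')} \leq f(S)$, I would split the sum according to polymer size. For $|S'| \leq d^3\log n$ the summand simplifies to $(\lambda e^c/d^8)^{|S'|}$; using \Cref{l:counting2linked} (at most $(ed^2)^{\ell-1}$ polymers of size $\ell$ through a fixed vertex) and the elementary bound $|N^2(S) \cup S| \leq |S|d^2$, this contribution is a geometric series of order $O(\lambda|S|/d^6)$. For $|S'| > d^3\log n$ we have $g(S') = 0$ and $|[S']| > d^2$, so \Cref{l:container} applies: bounding $e^{c|A|} \leq e^{ca}$ on each $\mathcal{G}^{\mathcal{D}}(a,b)$, summing the container factor $(n/2)e^{-(b-a)\log^2 d/(6d)}$ geometrically over $b \geq (1+\Omega(1/d))a$, and finally over $a \geq d^3\log n$, yields a total that is super-polynomially small in $d$, provided $c \leq c'\log^2 d/(6d^2)$ where $c'$ is the implicit constant from (P2). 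Hence KP holds with our choice of $c$.

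Having verified KP, I invoke the tail bound at single-vertex polymers $S_v = \{v\}$ and sum over $v \in \mathcal{D}$. Since every cluster $\Gamma$ is incompatible with $S_v$ for at least one $v$ in its support, this gives $\sum_{\Gamma}|w(\Gamma)|e^{g(\Gamma)} \leq nc/2$. To produce the $k$-dependent bound I lower-bound $g(\Gamma)$: for a cluster of size $\norm{\Gamma} = m \geq k$ (with $k \leq d/\Delta_2$) consisting only of polymers satisfying $g(S) > 0$, the co-degree bound $|N(S)| \geq d|S| - \Delta_2|S|^2/2$ applied termwise and $\sum_S|S|^2 \leq m^2$ give $g(\Gamma) \geq (dm - \Delta_2 m^2/2)\log(1+\lambda) - 8m\log d$. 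A short calculus check shows that $e^{-g(\Gamma)} \leq d^{8m}/(1+\lambda)^{dm-\Delta_2 m^2/2}$ is maximized at $m = k$ throughout the range where the bound is meaningful. Clusters containing a polymer of size $> d^3\log n$ (so $g(S) = 0$) are handled by a separate application of \Cref{l:container}, contributing only super-polynomially small mass. Combining:
\[
\sum_{\Gamma \in \mathcal{C}^{\mathcal{D}}_{\geq k}}|w(\Gamma)| \leq \tfrac{nc}{2}\cdot\frac{d^{8k}}{(1+\lambda)^{dk-\Delta_2 k^2/2}} + o(1) \leq \frac{nd^{8k}}{(1+\lambda)^{dk-\Delta_2 k^2}},
\]
where the final step uses $c/2 \leq (1+\lambda)^{\Delta_2 k^2/2}$, which is satisfied comfortably under the hypothesis $\lambda \geq C_0\log^2 d/d^{1/2}$.

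The principal obstacle will be verifying KP for the large polymers: the container lemma only gives the sub-exponential suppression $e^{-(b-a)\log^2 d/(6d)}$, which narrowly dominates the $e^{c|A|}$ slack only when $c = O(\log^2 d/d^2)$. This forces $f$ to have a very small constant, but this is counterbalanced naturally by the factor $n/2$ that appears when one sums over the single-vertex polymers in the tail step, thereby reproducing exactly the factor of $n$ in the target bound.
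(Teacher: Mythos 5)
Your proposal follows the same broad KP-plus-tail-bound strategy as the paper, and the verification of the Koteck\'y--Preiss condition via \Cref{l:counting2linked} and \Cref{l:container} is structurally sound. However, there is a genuine gap in the extraction of the $k$-dependent bound at the end, precisely the step the paper handles with the piecewise function $\gamma$ and \Cref{c:gamma_monotone}\ref{c:x_log_gamma_increasing_modulo_discontinuities}.

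After applying the KP tail bound to single-vertex polymers, you have $\sum_\Gamma |w(\Gamma)| e^{g(\Gamma)} \leq nc/2$, and you want to conclude $|w(\Gamma)| \leq e^{-g(\Gamma)} \cdot (\text{tail sum})$ with $e^{-g(\Gamma)} \leq d^{8k}/(1+\lambda)^{dk - \Delta_2 k^2/2}$ for every cluster of size $m = \norm{\Gamma} \geq k$. Your lower bound $g(\Gamma) \geq (dm - \Delta_2 m^2/2) \log(1+\lambda) - 8m \log d$ comes from applying the co-degree bound termwise, and it is only useful for $m$ up to roughly $d/\Delta_2$; for $m$ beyond that it goes negative, so $e^{-g(\Gamma)}$ is not controlled at all, and the supposed ``calculus check'' that the bound is maximised at $m = k$ is false once $m$ can range into $[d/\Delta_2, d^3\log n]$. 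To close this, you would need a second lower bound on $g(\Gamma)$ valid in the mid-range: for polymers with $|S| \leq d^3\log n$, hypothesis (P1) already gives $g(S) \geq 8|S|\log d$, hence $g(\Gamma) \geq 8m\log d$, and one must check that this exceeds $(dk - \Delta_2 k^2)\log(1+\lambda) - 8k\log d$ for all $m$ in the mid-range — which it does, but only because $\lambda$ is bounded above and $m\log d \gg dk\log(1+\lambda)$ once $m$ is of order $d$. This case analysis is exactly what the paper packages into the definition of $\gamma$ and \Cref{c:gamma_monotone}\ref{c:x_log_gamma_increasing_modulo_discontinuities}, and leaving it implicit is where your argument breaks.

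A second, related gap concerns clusters containing a polymer of size exceeding $d^3\log n$. Your $g$ vanishes on such polymers, so the tail bound provides no decay for a cluster consisting of a single large polymer ($e^{-g(\Gamma)} = 1$), and the remark that these are ``handled by a separate application of \Cref{l:container}'' is not enough: the container lemma bounds sums of $w(S)$, not sums of cluster weights $|w(\Gamma)|$ (which involve Ursell factors and products over multiple polymers). The paper avoids this entirely by setting $\gamma(\ell) = \exp(1/d^2)$ for $\ell > d^3\log n$, so that $g(S) = |S|/d^2 > 0$ even for large $S$, which is why the monotonicity argument in \Cref{c:gamma_monotone}\ref{c:x_log_gamma_increasing_modulo_discontinuities} covers this regime too. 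If you want to keep a $g$ that literally vanishes on large polymers, you would need a self-contained argument bounding the total weight of type-(b) clusters directly, which is not a one-line consequence of \Cref{l:container}.
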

For the proof, the following notation will be useful. Let $\lambda_1 \coloneqq \min(\lambda, 1)$ and define a function $\gamma \colon \mathbb{N} \to (1,\infty)$ by
\begin{align}
  \gamma(\ell) &\coloneqq     \left\{\begin{array} {c@{\quad \textup{if} \quad}l}
    (1+ \lambda)^{d - \Delta_2 \ell} \big/ d^8 & \ell \leq d/ \log \log d, \\[+1ex]
    (1+ \lambda)^{16 (\log d)/\lambda_1} & d/ \log \log d < \ell \le d^3 \log n, \\[+1ex]
    \exp(1/d^{2}) & \ell > d^3 \log n.
  \end{array}\right.\label{eq:gamma}
\end{align}

The function $\gamma$ satisfies the following claim, whose simple proof is presented in \Cref{sec:calculations}.
\begin{claim}\label{c:gamma_monotone} The following holds.
\begin{enumerate}[(i)]
  \item \label{c:gamma_nonincreasing} The function $\gamma$ is non-increasing.
  \item \label{c:x_log_gamma_increasing_modulo_discontinuities} For constant $k \in \mathbb{N}$, it holds that $k \log \gamma(k) \leq \ell \log \gamma(\ell)$ for every $k \leq \ell$.
\end{enumerate}
\end{claim}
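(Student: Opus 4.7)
The plan is to verify both parts by direct computation, splitting into cases according to which of the three pieces in \eqref{eq:gamma} contains the relevant values. Throughout I will use the two basic estimates $\log(1+\lambda)\ge \lambda/(1+\lambda)\ge \lambda_1/(1+C_0)$, together with the hypothesis $\lambda\ge C_0\log^2 d/d^{1/2}$, which give $d\log(1+\lambda)=\Omega(d^{1/2}\log^2 d)$ in the small-$\lambda$ regime and $d\log(1+\lambda)=\Omega(d)$ when $\lambda$ is of constant order; in either case this easily dominates $8\log d$.

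For \textbf{(i)}, the first piece is strictly decreasing in $\ell$ because the exponent $d-\Delta_2\ell$ is decreasing, and the other two pieces are constant, so the claim reduces to showing that $\gamma$ does not jump upward at $\ell=d/\log\log d$ or at $\ell=d^3\log n$. At the first boundary I need $(d-\Delta_2 d/\log\log d - 16\log d/\lambda_1)\log(1+\lambda)\ge 8\log d$; the left factor is $d(1-o(1))$ since $\Delta_2=O(1)$ and $16\log d/\lambda_1\le 16 d^{1/2}/(C_0\log d)=o(d)$, and the stated lower bound on $\log(1+\lambda)$ finishes the estimate. At the second boundary I need $16(\log d)\log(1+\lambda)/\lambda_1\ge 1/d^2$, which is immediate from $\log(1+\lambda)/\lambda_1\ge 1/(1+C_0)$.

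For \textbf{(ii)}, since $k$ is constant, $k\le d/\log\log d$ for all sufficiently large $d$, so $k$ lies in the first piece. I split into three subcases by the location of $\ell$. If $\ell$ is also in the first piece, direct subtraction yields
\[
\ell\log\gamma(\ell)-k\log\gamma(k)=(\ell-k)\bigl\{[d-\Delta_2(\ell+k)]\log(1+\lambda)-8\log d\bigr\},
\]
and since $\ell+k\le 2d/\log\log d$, the bracketed factor is $\Omega(d\log(1+\lambda))-8\log d\ge 0$. If $\ell$ is in the second piece, then $\ell\log\gamma(\ell)\ge (d/\log\log d)\cdot 16(\log d)\log(1+\lambda)/\lambda_1$ while $k\log\gamma(k)\le kd\log(1+\lambda)$, so the required inequality reduces to $16\log d/(\lambda_1\log\log d)\ge k$, which holds for large $d$ in both $\lambda$-regimes. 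If $\ell$ is in the third piece, then $\ell\log\gamma(\ell)=\ell/d^2>d\log n$, trivially dominating $k\log\gamma(k)\le kd\log(1+C_0)$.

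I do not anticipate a serious obstacle: the whole argument is a bookkeeping exercise of splitting by piece and by $\lambda$-regime. The one point needing a moment of care is that in the small-$\lambda$ case the term $16\log d/\lambda_1$ is not negligible compared to arbitrary subpolynomial quantities, but it remains $o(d)$ thanks to the hypothesis $\lambda_1\ge C_0\log^2 d/d^{1/2}$, which is what makes both boundary comparisons in (i) and the cross-piece comparisons in (ii) go through with room to spare.
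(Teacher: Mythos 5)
Your proof is correct and follows essentially the same case analysis as the paper's Appendix~B argument: for (i), piecewise monotonicity plus checking the two boundary jumps, and for (ii), placing the constant $k$ in the first regime and comparing across the three pieces, each time reducing to $d\log(1+\lambda)\gg\log d$ via the hypothesis $\lambda\ge C_0\log^2 d/d^{1/2}$. The only cosmetic difference is that in the first subcase of (ii) you subtract the two sides directly and factor out $(\ell-k)$, whereas the paper absorbs the $8\ell\log d$ correction into a $(1+o(1))$ factor; both are equally valid and rest on the same dominance of $d\log(1+\lambda)$ over $\log d$.
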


We will apply the Koteck\'y--Preiss condition with the functions $f,g \colon \mathcal{P}^{\mathcal{D}} \to (0,\infty)$ defined by
\[
  f(S) \coloneqq |S|/d^{2} \qquad \text{ and } \qquad g(S) \coloneqq |S| \log \gamma(|S|).
\]
For every $v \in V(G)$ denote by $\mathcal{P}^{\mathcal{D}}_v$ the set of all polymers $S\in \mathcal{P}^{\mathcal{D}}$ that contain $v$, i.e.,
\[
  \mathcal{P}^{\mathcal{D}}_v:=\{S \in \mathcal{P}^{\mathcal{D}} \mid v \in S\}.
\]

\begin{claim}\label{c:KoteckyPreiss}
  For every $v \in \mathcal{D}$,
  \[
    \sum_{S \in \mathcal{P}^{\mathcal{D}}_v} w(S) e^{f(S) + g(S)} \leq \frac{1}{d^{4}}.
  \]
\end{claim}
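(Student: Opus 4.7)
The plan is to split the sum $\sum_{S \in \mathcal{P}^{\mathcal{D}}_v} w(S) e^{f(S)+g(S)}$ by polymer size $\ell = |S|$, mirroring the three branches in the piecewise definition of $\gamma$. In each branch, $\gamma(\ell)^\ell$ has been engineered to cancel out the $(1+\lambda)^{|N(S)|}$ in the denominator of $w(S)$ down to a residual of roughly $d^{-8\ell}$; the factor $e^{f(S)} = e^{\ell/d^2}$ is harmless throughout.

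In the small range $\ell \leq d/\log\log d$ I would invoke the co-degree hypothesis and inclusion--exclusion to get $|N(S)| \geq d\ell - \binom{\ell}{2}\Delta_2 \geq (d-\Delta_2\ell)\ell$, matching exactly the exponent in $\gamma(\ell)^\ell = (1+\lambda)^{(d-\Delta_2\ell)\ell}/d^{8\ell}$. Combined with the $(ed^2)^{\ell-1}$ bound of \Cref{l:counting2linked} on the number of $2$-linked sets of size $\ell$ through $v$, the $\ell$-th term becomes $O\bigl((eC_0/d^6)^\ell\bigr)$, a geometric series dominated by $\ell=1$ and contributing $O(1/d^8)$.

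In the middle range $d/\log\log d < \ell \leq d^3\log n$ I would apply property (P1) to obtain $|N(S)| \geq (32\log d/\lambda_1)\ell$, together with the elementary inequality $\log(1+\lambda) \geq \lambda_1/2$ for $\lambda_1 = \min(\lambda,1)$, to conclude $(1+\lambda)^{|N(S)|} \geq d^{16\ell}$. Half of this absorbs $\gamma(\ell)^\ell = (1+\lambda)^{(16\log d/\lambda_1)\ell} \geq d^{8\ell}$, leaving $w(S)\gamma(\ell)^\ell \leq \lambda^\ell/d^{8\ell}$ once again, so the same counting yields a super-polynomially small contribution.

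The main obstacle is the large range $\ell > d^3\log n$, where enumerating $2$-linked sets through $v$ one by one is too wasteful. The plan is to drop the restriction $S \ni v$, group $2$-linked sets by the closure invariants $a = |[S]|$ and $b = |N(S)|$, and apply the container lemma \Cref{l:container}, whose hypotheses are guaranteed by property (P2) with $b \geq (1+c/d)a$. The inner sum is bounded by $(n/2)\exp(-(b-a)\log^2 d/(6d))$; multiplying by $e^{f(S)+g(S)} = e^{2|S|/d^2} \leq e^{2a/d^2}$ and using $b-a \geq ca/d$, the effective exponent becomes at most $-a\log^2 d/(12 d^2)$ for large $d$. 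For $a \geq d^3\log n$ this is at most $-d\log n\log^2 d/12$, so summing over the at most $n^2$ pairs $(a,b)$ leaves an $n^{-\Omega(d\log^2 d)}$ bound, comfortably within the $1/d^4$ budget. Collecting the three contributions then yields the claim.
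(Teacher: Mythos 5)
Your proposal is correct and mirrors the paper's proof essentially step by step: the same three-range split at $d/\log\log d$ and $d^3\log n$, the same use of the co-degree bound and \Cref{l:counting2linked} in the small range, the same use of (P1) and the inequality $\log(1+\lambda)\geq\lambda_1/2$ (stated in the paper as $1+\lambda\geq e^{\lambda/2}$) in the middle range, and the same switch to the closure invariants $(a,b)$ and the container lemma \Cref{l:container} in the large range, dropping the requirement $v\in S$ via the inclusion $\mathcal{Q}_\ell(a,b)\subseteq\mathcal{G}^{\mathcal{D}}(a,b)$. The only cosmetic differences are that you keep the sharper $(ed^2)^{\ell-1}$ count where the paper rounds up to $d^{3\ell}$, and your estimate of the small-range contribution as $O(1/d^8)$ is tighter than the paper's $1/(3d^4)$ budget — though your intermediate expression $O((eC_0/d^6)^\ell)$ for the $\ell$-th term is slightly inconsistent with that $O(1/d^8)$ claim; the sharper count gives $\ell=1$ contribution $O(\lambda/d^8)$ since $(ed^2)^0=1$, so the final $O(1/d^8)$ is the one that follows.
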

Before proving \Cref{c:KoteckyPreiss}, we first show that it implies \Cref{l:Kotecky}.

\begin{proof}[Proof of~\Cref{l:Kotecky}]
  Given $S,S' \in \mathcal{P}^{\mathcal{D}}$, we note that
  \[ S' \nsim S \quad \text{ if and only if }\quad \exists\ v \in N^2[S] \cap S',\]
  where $N^2[S] \coloneqq S \cup N^2(S)$.
  Thus, for a fixed $S \in \mathcal{P}^{\mathcal{D}}$,
  \begin{align} \label{eq:KoteckyFirstBound}
    \sum_{S' \nsim S} w(S') e^{f(S') + g(S')} & \leq \sum_{v \in N^2[S]} \sum_{S' \in \mathcal{P}^{\mathcal{D}}_v} w(S') e^{f(S') + g(S')}.
  \end{align}
  Using $|N^2[S]| \leq d^2 |S|$ and \Cref{c:KoteckyPreiss}, we may upper bound the right-hand side by
  \[
    d^2 |S| \cdot \frac{1}{d^{4}} = \frac{|S|}{d^{2}} = f(S).
  \]
  By the Koteck\'y--Preiss condition (\Cref{lem:KP} \eqref{eqKPtail}) this implies
  \begin{equation}\label{eq:Kotecky}
    \sum_{\substack{\Gamma \in \mathcal{C}^{\mathcal{D}}\\ \Gamma \nsim S}} |w(\Gamma)| e^{g(\Gamma)} \leq f(S),
  \end{equation}
  where $g(\Gamma)\coloneqq \sum_{S \in \Gamma} g(S)$ and $w(\Gamma)$ is defined in \eqref{eq:defn_cluster_weight}.
  For each $v \in V(G)$, if we take $S=\{v\}$, \eqref{eq:Kotecky} becomes
  \[
    \sum_{\substack{\Gamma \in \mathcal{C}^{\mathcal{D}}\\ \Gamma \nsim v}} |w(\Gamma)| e^{g(\Gamma)} \leq \frac{1}{d^{2}},
  \]
  and thus summing over all $v \in V(G)$ we get
  \begin{equation}\label{eq:gamma_cd}
    \sum_{\Gamma \in \mathcal{C}^{\mathcal{D}}} |w(\Gamma)| e^{g(\Gamma)} \leq \frac{n}{d^{2}}.
  \end{equation}
  Recalling that $g(S) \coloneqq |S| \log \gamma(|S|)$, we may bound
  \begin{equation} \label{eq:gandgamma}
    g(\Gamma) = \sum_{S \in \Gamma} g(S)= \sum_{S \in \Gamma} |S| \log \gamma(|S|) \geq \sum_{S \in \Gamma} |S| \log \gamma(\norm{\Gamma}) = \norm{\Gamma} \log \gamma(\norm{\Gamma}),
  \end{equation}
  where the inequality uses \Cref{c:gamma_monotone}\eqref{c:gamma_nonincreasing} and $|S| \leq  \sum_{S \in \Gamma} |S| = \norm{\Gamma}$.
  Applying this observation to~\eqref{eq:gamma_cd} and omitting clusters with $\norm{\Gamma} < k$, we obtain
  \begin{equation} \label{eq:boundrearranged}
      \sum_{\Gamma \in \mathcal{C}^{\mathcal{D}}_{\geq k}} |w(\Gamma)| \cdot e^{\norm{\Gamma} \log \gamma(\norm{\Gamma})} \leq \frac{n}{d^{2}}.
  \end{equation}
    
  For constant $k \in \mathbb{N}$, we may apply \Cref{c:gamma_monotone}\ref{c:x_log_gamma_increasing_modulo_discontinuities} to obtain
  \begin{equation*}
      |L^{\mathcal{D}}_{< k} - \log \Xi| \leq \sum_{\Gamma \in \mathcal{C}^{\mathcal{D}}_{\geq k}} |w(\Gamma)| \leq \frac{n d^{-2}}{\exp(k \log \gamma(k))} \leq \frac{n d^{8k}}{(1+ \lambda)^{dk-\Delta_2 k^2}},
  \end{equation*}
  where the last inequality follows by the definition of $\gamma$ for constant $k$. This proves the lemma.
\end{proof}

We turn to the proof of \Cref{c:KoteckyPreiss}, in which we will assume that $G$ satisfies the isoperimetric inequalities in the statement of \Cref{thm:general}.

\begin{proof}[Proof of \Cref{c:KoteckyPreiss}]
  Fix $v \in \mathcal{D}$. For $\ell \in \mathbb{N}$ let
  \[ \mathcal{Q}_\ell \coloneqq \{S \in \mathcal{P}^{\mathcal{D}}_v \mid |S|=\ell\} \qquad\text{and}\qquad W_\ell := \sum_{S \in \mathcal{Q}_\ell} w(S) e^{f(S)+g(S)}. \]
  Since every $S \in \mathcal{P}^\mathcal{D}_v$ satisfies $|S| \leq |[S]| \leq n/4$, we have that $\mathcal{P}^\mathcal{D}_v = \mathcal{Q}_1 \cup \cdots \cup \mathcal{Q}_{n/4}$. Therefore,
  \[
    \sum_{S \in \mathcal{P}^\mathcal{D}_v} w(S) e^{f(S)+g(S)}= \sum_{\ell=1}^{n/4} W_\ell.
  \]
  Moreover, given $\ell \in \mathbb{N}$, by \Cref{l:counting2linked} we can bound $|\mathcal{Q}_\ell| \leq (ed^2)^{\ell-1} \leq d^{3\ell}$, and therefore
  \begin{equation}\label{eq:c:KoteckyPreiss:max_bound}
  W_\ell \leq d^{3\ell}\cdot \max_{S \in \mathcal{Q}_\ell} w(S) \cdot \max_{S \in \mathcal{Q}_\ell} e^{f(S) + g(S)}.
  \end{equation}
  We will upper bound $W_\ell$ in three different ways according to the value of $\ell$.

  \ \\\noindent\textbf{Case 1: $\ell \leq d/ \log \log d$.}
  For every $S \subseteq \mathcal{D}$, since the co-degree of $G$ is bounded by $\Delta_2$, we have $|N(S)| \geq |S|(d-\Delta_2|S|)$ and thus
  \[\max_{S \in \mathcal{Q}_\ell}  w(S) = \max_{S \in \mathcal{Q}_\ell}  \frac{\lambda^{|S|}}{(1+\lambda)^{|N(S)|}}  \leq \left(\frac{\lambda}{(1+\lambda)^{d - \Delta_2 \ell}}\right)^\ell.\]
  Recalling that $e^{f(S)} = \exp\left(|S| / d^{2}\right)$ and $e^{g(S)} = (\gamma(|S|))^{|S|}$ we get,
  \[
    \max_{S \in \mathcal{Q}_\ell} e^{f(S) + g(S)} = \left(\exp\left(1/d^{2}\right) \cdot \frac{(1+ \lambda)^{d - \Delta_2 \ell}}{d^{8}}\right)^\ell,
  \]
  where we also substituted the definition of $\gamma$ in this range of $\ell$. Plugging these two estimates into~\eqref{eq:c:KoteckyPreiss:max_bound}, we obtain
  \begin{equation}\label{eq:c:KoteckyPreiss:small2}
    \sum_{\ell=1}^{d/ \log \log d} W_\ell \leq \sum_{\ell=1}^{d/ \log \log d} \left(\frac{\lambda \exp\left(1/d^{2}\right)}{d^5}\right)^{\ell}.
  \end{equation}
  Therefore, since $\lambda$ is bounded,  \begin{equation}\label{eq:c:KoteckyPreiss:small_final}
    \sum_{\ell=1}^{d/ \log \log d} W_\ell \leq \sum_{\ell=1}^{\infty} \left(\frac{e\lambda}{d^{5}}\right)^\ell \leq \frac{1}{3d^{4}},
  \end{equation}
  proving the required bound in this case.

  \ \\\noindent\textbf{Case 2: $d/ \log \log d < \ell \leq d^3 \log n$.} Recall that $\lambda_1 \coloneqq\min(\lambda, 1)$. By assumption, we have $|N(S)| \geq 32 \frac{\log d}{\lambda_1}|S|$ for $|S| \leq d^3 \log n$ and thus 
  \[
  \max_{S \in \mathcal{Q}_\ell} w(S)\leq \left(\frac{\lambda}{(1+\lambda)^{\frac{32 \log d}{\lambda_1}}}\right)^\ell.
  \] 
  In this case, $e^{f(S)}= \exp\left(|S|/d^2 \right)$ and $e^{g(S)} = (\gamma(|S|))^{|S|} = (1+\lambda)^{\frac{16|S| \log d}{\lambda_1}}$. Therefore,
  \[
  \max_{S \in \mathcal{Q}_\ell} e^{f(S) + g(S)} = \exp\left(\ell/d^{2}\right) \cdot (1+ \lambda)^{\frac{16 \ell \log d}{\lambda_1}} \leq \left(e \cdot (1+ \lambda)^{\frac{16 \log d}{\lambda_1}}\right)^\ell.
  \]
  Hence, by~\eqref{eq:c:KoteckyPreiss:max_bound} we obtain
  \begin{equation}\label{eq:c:KoteckyPreiss:middle1}
  \sum_{\ell = d/ \log \log d}^{d^3 \log n} W_\ell \leq \sum_{\ell = d/ \log \log d}^{d^3 \log n} \left(\frac{ed^3\lambda}{(1+\lambda)^{\frac{16 \log d}{\lambda_1}}}\right)^{\ell}.
  \end{equation}
  We claim that the above fraction is $o(d^{-4})$. Indeed, if $\lambda \geq 1$, it is $O(d^3 \cdot 2^{-16\log d}) = o(d^{-4})$ for large $d$, since $\lambda$ is also bounded from above. Otherwise, we may bound $1+\lambda \geq e^{\lambda/2}$ and obtain
  \[ 
    \frac{d^3 \lambda}{(1+\lambda)^{\frac{16 \log d}{\lambda}}} = O(d^3 e^{-8 \log d}) = o\left(\frac{1}{d^{4}}\right).
  \]
  Therefore, the geometric sum in~\eqref{eq:c:KoteckyPreiss:middle1} is asymptotically equal to its first term, which is itself $o(d^{-4})$. We conclude that
  \begin{equation}\label{eq:c:KoteckyPreiss:middle_final}
  \sum_{\ell = d/ \log \log d}^{d^3 \log n} W_\ell \leq \frac{1}{3 d^{4}},
  \end{equation}
  as desired.

  \ \\\noindent\textbf{Case 3: $d^3 \log n < \ell \leq n/4$.} 
  For this case, we will use the container lemma instead of~\eqref{eq:c:KoteckyPreiss:max_bound}. 
  
  Let $c > 0$ be such that $|N(X)| \geq (1 + c/d)|X|$ for every $X \subseteq \mathcal{D}$ of size at most $n/4$, as guaranteed by condition \ref{i:main:large-expansion} in  \Cref{thm:general}. 
  Given $a,b \in \mathbb{N}$, let us set
  \[
    \mathcal{Q}_\ell(a, b) \coloneqq \{S \in \mathcal{Q}_\ell \mid |[S]|=a\text{ and }|N(S)|=b\}.
  \]
  Note that for every $a, b \in \mathbb{N}$,
  \[
     \bigcup_{\ell=1}^{a} \mathcal{Q}_{\ell}(a, b) \subseteq \mathcal{G}^{\mathcal{D}}(a,b),
  \]
  where $\mathcal{G}^{\mathcal{D}}(a, b)\coloneqq \{A\subseteq \mathcal{D}: A \text{ 2-linked, } |[A]|=a, |N(A)|=b\}$ as in \eqref{def:Gab}. Setting \[
  \eta \coloneqq 1+ \frac{c}{d},
  \]
  we have $|N(S)| = |N([S])| \geq \eta |[S]|$. Therefore, $\mathcal{Q}_\ell(a,b)$ is empty if $b < \eta a$.
  We may therefore decompose
  \begin{equation}\label{eq:case3weight}
    \sum_{\ell=d^3 \log n}^{n/4} W_\ell \leq \sum_{a=d^3 \log n}^{n/4} \ \sum_{b=\eta a}^{n/2} \ \sum_{\ell=d^3 \log n}^{a} \ \sum_{S \in  \mathcal{Q}_\ell(a, b)} \frac{\lambda^{\ell}}{(1+\lambda)^{b}} \cdot e^{f(S) +g(S)}.
  \end{equation}
  By the container lemma (\Cref{l:container}), for all $d^3 \log n\le a \le \frac{n}{4}$ and every $b \geq \eta a$, we have 
  \begin{equation}\label{eq:c:KoteckyPreiss:container}
  \sum_{\ell=d^3 \log n}^{a} \sum_{ S \in \mathcal{Q}_\ell(a, b)} \frac{\lambda^{\ell} }{(1+\lambda)^{b}} \leq \frac{n}{2} \exp\left(- \frac{(b-a) \log^2 d}{6d}\right).
  \end{equation}
  We may use~\eqref{eq:c:KoteckyPreiss:container} together with the fact that, in this range of $\ell$, $f(S) = g(S) = |S|/d^{2} \leq a/d^{2}$ for every $S \in \mathcal{Q}_\ell(a,b)$, to bound the right-hand side of~\eqref{eq:case3weight} and obtain
  \begin{equation}\label{eq:c:KoteckyPreiss:large2}
    \sum_{\ell=d^3 \log n}^{n/4} W_\ell \leq \sum_{a=d^3 \log n}^{n/4}\  \sum_{b=\eta a}^{n/2} \frac{n}{2} \exp\left(\frac{2a}{d^{2}
    } - \frac{(b-a) \log^2 d}{6d}\right).
  \end{equation}
  Because $a$ and $b$ in \eqref{eq:c:KoteckyPreiss:large2} satisfy $b-a \geq (\eta - 1)a = c a/d$ and $a \geq \ell \geq d^3\log n$, we have
  \[
    \frac{(b-a) \log^2 d}{6d} - \frac{2a}{d^{2}} = \Omega\left(\frac{a \log^2 d}{d^{2}}\right) = \Omega\left(d \log n \cdot \log^2 d\right).
  \]
  Therefore,~\eqref{eq:c:KoteckyPreiss:large2} reduces to
  \begin{equation}\label{eq:c:KoteckyPreiss:large_final}
    \sum_{\ell=d^3 \log n}^{n/4} W_\ell \leq \exp\left(-\Omega\left(d \log n \cdot \log^2 d\right)\right) \leq \frac{1}{3d^{4}}.
  \end{equation}
  
  Combining \eqref{eq:c:KoteckyPreiss:small_final}, \eqref{eq:c:KoteckyPreiss:middle_final} and \eqref{eq:c:KoteckyPreiss:large_final} yields \Cref{c:KoteckyPreiss}.
\end{proof}

\section{Properties of the measure $\hat{\mu}$} \label{sec:propertiesmu}

We start by describing the typical structure of the defect configuration drawn according to the measure $\nu^{\mathcal{D}}$.

\begin{lemma}\label{l:Gammasmall}
  Let $\mathcal{D} \in \{\mathcal{O},\mathcal{E}\}$ and let $ I^\mathcal{D}
  $ be drawn according to $\nu^\mathcal{D}$ as in \eqref{e:measurevuD}.
  Then
  \[
    \mathbb{P}_{\nu^\mathcal{D}}\left( | I^\mathcal{D}| \geq \frac{n}{d^2} \right) \leq\exp\left(-\frac{n}{2d^{4}}\right).
  \]
\end{lemma}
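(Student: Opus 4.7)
The plan is to apply the exponential Markov inequality~\eqref{eq:Markov} to $X := |I^\mathcal{D}|$ with $r := 1/d^2$ and $a := n/d^2$, which reduces the desired tail bound to verifying that the cumulant generating function satisfies $K_X(r) \leq n/(2d^4)$, since then $\exp(-ra + K_X(r)) \leq \exp(-n/(2d^4))$. The key identity is obtained by distributing $\prod_{S \in \PC} e^{r|S|} w(S) = e^{r |I^\mathcal{D}(\PC)|} \prod_{S \in \PC} w(S)$, which, after dividing by $\Xi^\mathcal{D}$, yields
\[
\mathbb{E}_{\nu^\mathcal{D}}\bigl[e^{rX}\bigr] = \frac{\Xi^\mathcal{D}(r)}{\Xi^\mathcal{D}},
\]
where $\Xi^\mathcal{D}(r)$ is the partition function of the same polymer model with reweighted polymer weights $w_r(S) := e^{r|S|} w(S)$.

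A direct inspection of~\eqref{eq:gamma} shows that $\log \gamma(|S|) \geq 1/d^2 = r$ for every polymer $S$, so the reweighted polymer model inherits the Koteck\'y--Preiss condition with $f$ unchanged and a shifted $\tilde g(S) := g(S) - r|S| \geq 0$: indeed, $w_r(S) e^{f(S) + \tilde g(S)} = w(S) e^{f(S) + g(S)}$, so the verification reduces to \Cref{c:KoteckyPreiss}. Consequently both cluster expansions converge absolutely, and
\[
K_X(r) = \log \Xi^\mathcal{D}(r) - \log \Xi^\mathcal{D} = \sum_{\Gamma \in \mathcal{C}^\mathcal{D}} w(\Gamma) \bigl( e^{r \|\Gamma\|} - 1 \bigr).
\]
I split this sum into size-one and size-$\geq 2$ contributions. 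The size-one part equals $(e^r - 1)\, L^\mathcal{D}_1 = (e^r - 1) \cdot \frac{n\lambda}{2(1+\lambda)^d}$, while the size-$\geq 2$ part is bounded in absolute value by $\sum_{\|\Gamma\| \geq 2} |w_r(\Gamma)|$, using $|e^{r\|\Gamma\|} - 1| \leq e^{r\|\Gamma\|}$ for $r \geq 0$ together with $|w_r(\Gamma)| = e^{r\|\Gamma\|}\,|w(\Gamma)|$.

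Both contributions turn out to be super-polynomially smaller than $n/d^4$: the assumption $\lambda \geq C_0 \log^2 d / d^{1/2}$ guarantees $(1+\lambda)^d \geq \exp(d\lambda/(1+\lambda)) = \exp(\Omega(d^{1/2} \log^2 d))$, so $(e^r - 1)\, L_1^\mathcal{D} = O(n\lambda/(d^2(1+\lambda)^d))$ and, by applying \Cref{l:Kotecky} to the reweighted polymer model with $k = 2$, $\sum_{\|\Gamma\| \geq 2} |w_r(\Gamma)| = O(nd^{16}/(1+\lambda)^{2d - 4\Delta_2})$; both are of order $n \exp(-\Omega(d^{1/2} \log^2 d))$ and hence, for $d$ large enough, at most $n/(4d^4)$. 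Summing, $K_X(r) \leq n/(2d^4)$, which completes the proof. The only subtlety is that the reweighted polymer model inherits the Koteck\'y--Preiss condition essentially for free from the choice of $\gamma$, so \Cref{l:Kotecky} transfers to it with only negligible changes in the constants.
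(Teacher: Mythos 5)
Your proposal is correct and follows essentially the same route as the paper: exponential Markov with $r = 1/d^2$ applied to $|I^\mathcal{D}|$, reducing the tail bound to controlling the cumulant generating function via the tilted polymer model with weights $e^{|S|/d^2} w(S)$, whose cluster expansion converges by the same Koteck\'y--Preiss verification. The only cosmetic difference is that you compute $K_X(r)$ exactly as a difference of cluster expansions and split by cluster size, whereas the paper simply drops the nonnegative term $\log \Xi^{\mathcal{D}} \geq 0$ and bounds $\log \tilde{\Xi}^{\mathcal{D}}$ by the full cluster sum with $k = 1$; both yield a superpolynomially small bound on the cumulant generating function.
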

\begin{proof}
  Let us define an auxiliary weight function $\tilde{w} \colon \mathcal{P}^\mathcal{D} \to (0,\infty)$ by
  \[
    \tilde{w}(S) :=  w(S) \exp(|S|/d^{2}).
  \]
  Analogously to the expansion of $\log \Xi^{\mathcal{D}}$, the cluster expansion in $\tilde{w}(S)$ is given by
  \[
    \log \tilde{\Xi}^{\mathcal{D}}= \sum_{\Gamma \in \mathcal{C}^{\mathcal{D}}} \tilde{w}(\Gamma),
  \]
  where $\tilde{\Xi}^{\mathcal{D}}$ is the partition function associated with $\tilde{w}$ as in \eqref{e:polymodelpartitionfunction}. The statements of \eqref{eq:c:KoteckyPreiss:small_final}, \eqref{eq:c:KoteckyPreiss:middle_final} and \eqref{eq:c:KoteckyPreiss:large_final} equally hold for $\tilde{w}(S)$
  and thus the Koteck\'y--Preiss condition holds for $\tilde{w}(S)$, $f(S)$ and $g(S)$.
  For any $\PC \in \Omega^\mathcal{D}$,
  \begin{equation}\label{eq:omegatilde}
    \prod_{S \in \PC} \tilde{w}(S) = \prod_{S \in \PC} w(S) \exp(|S|/d^{2}) = \exp(\norm{\PC} / d^{2}) \prod_{S \in \PC} w(S),
  \end{equation}
  where $\norm{\PC} := \bigcup_{S\in \PC} |S|$.
  Recalling the definition of $\Xi^{\mathcal{D}}$ from \eqref{e:polymodelpartitionfunction}, we thus conclude
  \begin{equation*}
    \frac{\tilde{\Xi}^\mathcal{D}}{\Xi^\mathcal{D}}=\frac{\sum_{\PC \in \Omega^\mathcal{D}} \exp\left(\norm{\PC} / d^{2}\right) \prod_{S \in \PC} w(S)}{\Xi^{\mathcal{D}}}\\
    = \sum_{\PC \in \Omega^\mathcal{D}}  \exp\left(\norm{\PC} / d^{2}\right) \nu^\mathcal{D}(\PC).
    \end{equation*}
    Therefore, $\tilde{\Xi}^\mathcal{D}/ \Xi^\mathcal{D} = \mathbb{E}_{\nu^\mathcal{D}} [ \exp(|I^\mathcal{D}| / d^{2}) ]$. Recalling that the cumulant generating function of a random variable $X$ is $K_X(r):= \log \mathbb{E}[e^{rX}]$, we obtain
  \begin{equation}\label{e:tildeXilower} K_{|I^\mathcal{D}|}\left(1/d^{2}\right) = 
 \log \tilde{\Xi}^\mathcal{D}  -  \log {\Xi}^\mathcal{D} \leq \log \tilde{\Xi}^\mathcal{D},
  \end{equation}
  where the inequality is just the statement that $\Xi^\mathcal{D} \geq 1$ (the empty polymer configuration has weight $1$).
  By the Koteck\'y--Preiss condition the analogue of \eqref{eq:boundrearranged} holds for $\tilde{w}(S)$ and applying this with $k=1$, noting that $\mathcal{C}^{\mathcal{D}}_{\geq 1} = \mathcal{C}^{\mathcal{D}}$, yields
  \[
    \log \tilde{\Xi}^\mathcal{D} \leq  \sum_{\Gamma \in \mathcal{C}^{\mathcal{D}}} |\tilde{w}(\Gamma)| \leq \frac{n d^{-2}}{\gamma(1)} = \frac{n d^{6}}{(1+ \lambda)^{d - \Delta_2}} = O\left(\frac{nd^6}{(1+\lambda)^d}\right),
  \]
  where the second equality follows from the definition of $\gamma(1)$ in \eqref{eq:gamma}, and the last follows from $\lambda = O(1)$. Hence, by \eqref{e:tildeXilower}, we have
  \begin{equation}\label{eq:boundcumulant}
    K_{|I^\mathcal{D}|}\left(1/d^2\right) = O\left(\frac{nd^6}{(1+\lambda)^d}\right),
  \end{equation}
  and we can use Markov's inequality in the form of \eqref{eq:Markov} with $r=1/d^2$ to bound the upper tail of $|I^\mathcal{D}|$ and obtain
  \[
    \mathbb{P}\left(|I^\mathcal{D}|\geq \frac{n}{d^2}\right) 
    \leq \exp\left(-\frac{n}{d^{4}} + K_{|I^\mathcal{D}|}\left(\frac{1}{d^{2}}\right)\right)
  \leq \exp\left(-\frac{n}{d^{4}} + O\left(\frac{nd^6}{(1+\lambda)^d}\right)\right),
  \]
  due to \eqref{eq:boundcumulant}. 
  Since $\lambda \geq C_0 (\log^2 d)/\sqrt{d}=\omega\big((\log d)/d\big)$, the term $(1+\lambda)^d$ grows faster than any fixed polynomial in $d$ and in particular $nd^6/(1+\lambda)^{d}=o(n/d^4)$. For $d$ large enough this yields
  \[
    \mathbb{P}\left(|I^\mathcal{D}|\geq \frac{n}{d^2}\right) \leq \exp\left(-\frac{n}{2d^{4}}\right),
  \]
  as claimed.
\end{proof}

In other words, \Cref{l:Gammasmall} says that typically the defect configuration will contain just a small fraction of the vertices on the defect side.
Thus, there are many unblocked vertices on the non-defect side, i.e., vertices in $\mathcal{D}' \setminus I^{\mathcal{D}}$ that may be included in $I^{\mathcal{D}'}$ in \hyperref[non-defect-side]{(S3)}.

For an independent set $I\in \mathcal{I}(G)$, let $\mathcal{M}=\mathcal{M}(I)$ be its minority side, i.e., the element $\mathcal{M} \in \{ \mathcal{O},\mathcal{E}\}$ such that $|I \cap \mathcal{M}| \leq |I \setminus \mathcal{M}|$ (in case of equality, we choose one side arbitrarily).
The next lemma concludes that typically the defect side $\mathcal{D}$ contains fewer vertices of the independent set than the non-defect side $\mathcal{D}'$.

\begin{lemma}\label{l:minoritydefect}
  Let $(\mathcal{D}, I)$ be drawn according to the measure $\hat{\mu}^*$ as in \eqref{eq:muhatstar}, and let $\mathcal{M}(I)$ be the minority side of $I$.
  Then
  \[
    \mathbb{P}_{\hat{\mu}^*}\big( \mathcal{D} \neq \mathcal{M}(I)\big) \leq \exp\left(-\frac{n}{3d^{4}}\right).
  \]
\end{lemma}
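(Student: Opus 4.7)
The plan is to show that, under $\hat{\mu}^*$, the defect side contains very few vertices of $I$ whereas the non-defect side contains many vertices of $I$, so the defect side is almost surely the minority side. More concretely, I will introduce the two events
\[
E_1 \coloneqq \left\{ |I^{\mathcal{D}}| \geq n/d^2 \right\} \qquad\text{and}\qquad E_2 \coloneqq \left\{ |I^{\mathcal{D}'}| \leq n/d^2 \right\},
\]
and observe that if neither holds then $|I^{\mathcal{D}'}| > n/d^2 > |I^{\mathcal{D}}|$, so $\mathcal{D}$ is indeed the minority side. Hence
\[
\mathbb{P}_{\hat{\mu}^*}\bigl(\mathcal{D}\neq \mathcal{M}(I)\bigr) \leq \mathbb{P}_{\hat{\mu}^*}(E_1) + \mathbb{P}_{\hat{\mu}^*}(E_2).
\]

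The first term is handled directly by \Cref{l:Gammasmall}, since by step \eqref{defect-step} the defect configuration $I^{\mathcal{D}}$ is sampled according to $\nu^{\mathcal{D}}$, yielding $\mathbb{P}_{\hat{\mu}^*}(E_1) \leq \exp(-n/(2d^4))$.

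The second term I will bound by conditioning on $I^{\mathcal{D}}$ with $|I^{\mathcal{D}}| < n/d^2$. Since $G$ is $d$-regular, $|N(I^{\mathcal{D}})| \leq d \cdot n/d^2 = n/d$, so the number $N$ of unblocked vertices in $\mathcal{D}'$ satisfies $N \geq n/2 - n/d \geq n/3$ for $d$ sufficiently large. By step \eqref{non-defect-side}, conditional on this $I^{\mathcal{D}}$, we have $|I^{\mathcal{D}'}| \sim \operatorname{Bin}(N, q)$ with $q = \lambda/(1+\lambda)$. The hypothesis $\lambda \geq C_0 (\log^2 d)/d^{1/2}$ gives $q = \Omega(\log^2 d / \sqrt{d})$, so the mean $Nq$ is of order $n \log^2 d/\sqrt{d}$, far above the threshold $n/d^2$. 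A Chernoff bound (\Cref{l:Chernoff}) with deviation $r = Nq - n/d^2 \geq Nq/2$ then yields
\[
\mathbb{P}_{\hat{\mu}^*}\bigl(|I^{\mathcal{D}'}| \leq n/d^2 \mid I^{\mathcal{D}}\bigr) \leq 2\exp\!\left(-\tfrac{Nq^2}{2}\right) \leq 2\exp\!\bigl(-\Omega(n\log^4 d/d)\bigr).
\]
Integrating over $I^{\mathcal{D}}$ with $|I^{\mathcal{D}}| < n/d^2$ and combining with \Cref{l:Gammasmall} for the complementary event gives $\mathbb{P}_{\hat{\mu}^*}(E_2) \leq \exp(-n/(2d^4)) + 2\exp(-\Omega(n\log^4 d/d))$.

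Finally, since the assumption $n = \omega(d^5)$ ensures that both bounds are much smaller than $\exp(-n/(3d^4))/2$ for $d$ large, the union bound yields the required estimate. I do not anticipate any serious obstacle here: the isoperimetric/container machinery is not needed for this step, only the control on $|I^{\mathcal{D}}|$ from \Cref{l:Gammasmall} together with a routine Chernoff computation. The only mildly delicate point is to write the conditional argument cleanly, making sure the Binomial variable is independent of $I^{\mathcal{D}}$ given its size.
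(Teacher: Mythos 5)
Your proof is correct and follows essentially the same approach as the paper: condition on the event that $|I^{\mathcal{D}}|$ is small (controlled by \Cref{l:Gammasmall}), observe that conditionally $|I^{\mathcal{D}'}|$ is binomial on roughly $n/2$ trials, and apply Chernoff to show it is large with high probability. The only cosmetic difference is that the paper compares $|I^{\mathcal{D}'}|$ to a dominated variable $Y\sim\Bin(M',q)$ and deviates by $n/(2d)$ from the mean, while you deviate by $Nq - n/d^2$; both yield bounds vastly smaller than $\exp(-n/(3d^4))$, so the dominant error in both proofs comes from \Cref{l:Gammasmall}, and the conclusion follows identically.
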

\begin{proof}
  In the following, we condition on the event that the defect set $I^\mathcal{D}$ chosen in \hyperref[defect-step]{(S2)} fulfils $|I^\mathcal{D}| \leq \frac{n}{d^2}$ and note that \Cref{l:Gammasmall} bounds the failure probability for this event by
  \begin{equation} \label{eq:failure}
    \mathbb{P}_{\nu^\mathcal{D}}\left( | I^\mathcal{D}| \geq \frac{n}{d^2} \right) \leq \exp\left(-\frac{n}{2d^{4}}\right).
  \end{equation}

  Let $X = |I \setminus \mathcal{D}|=|I \cap \mathcal{D}'|$ be the size of the intersection of $I$ with the non-defect side.
  By \hyperref[non-defect-side]{(S3)}, $X$ is distributed as $\Bin(M, \frac{\lambda}{1+\lambda})$, where $M \coloneqq \frac{n}{2}- |N(I^\mathcal{D})| \geq \frac{n}{2} - d |I^\mathcal{D}|$ and hence, under our conditioning, $X$ is stochastically dominated by a random variable $Y \sim \Bin(M', \frac{\lambda}{1+\lambda})$ where $M' =\frac{n}{2}\left(1-\frac{2}{d}\right)$, and, in particular,
  \begin{equation}\label{eq:minority-stochastic}
    \mathbb{P}_{\hat{\mu}^*} \left( X \leq \frac{n}{d^2} \;\middle|\; |I^\mathcal{D}| \leq \frac{n}{d^2}\right) \leq \Pr*{Y \leq \frac{n}{d^2}}.
  \end{equation}
  Since $\lambda \geq C_0 \frac{\log^2 d}{d^{1/2}}$, for $d$ large enough we may bound
  \[
    \mathbb{E}\left[Y\right] = \frac{\lambda}{1+\lambda} \cdot \frac{n}{2} \left(1-\frac{2}{d}\right) > \frac{n}{d}.
  \]
  Hence, by applying the Chernoff bound (\Cref{l:Chernoff}) to $Y$, we obtain
  \begin{equation}\label{eq:Chernoffapplication}
    \Pr*{Y \leq \frac{n}{d^2}} \leq \Pr*{|Y - \mathbb{E}[Y]| \geq \frac{n}{2d}} \leq 2 \exp\left(-\frac{2 n^2}{4d^2M'}\right) =  \exp\left(-\Omega\left(\frac{n}{d^2}\right)\right).
  \end{equation}
  From \eqref{eq:failure}, \eqref{eq:minority-stochastic} and \eqref{eq:Chernoffapplication} it follows that
  \begin{align*}
    \mathbb{P}_{\hat{\mu}^*}\left( \mathcal{D} \neq \mathcal{M}(I) \right) &\leq  \mathbb{P}_{\hat{\mu}^*}\left( |I^\mathcal{D}| > \frac{n}{d^2} \right)  +  \mathbb{P}_{\hat{\mu}^*}\left( X \leq \frac{n}{d^2} \;\middle|\; |I^\mathcal{D}| \leq \frac{n}{d^2} \right)  \\
   &\leq \exp\left(-\frac{n}{2d^{4}}\right) + \exp\left(-\Omega\left(\frac{n}{d^{2}}\right)\right)\\
 &\leq \exp\left(-\frac{n}{3d^{4}}\right),
  \end{align*}
  as claimed.
\end{proof}

  Recall from \Cref{sec:strategy} that for each $I \in \mathcal{I}(G)$, we define
  \begin{equation}\label{eq:omega:recall}
    \omega(I) = \lambda^{|I|}\qquad\text{and}\qquad\hat{\omega}(I) = \left(\mathbbm{1}_{\{I^\mathcal{O} \in \Omega^{\mathcal{O}}\}} + \mathbbm{1}_{\{I^\mathcal{E} \in \Omega^{\mathcal{E}}\}}\right) \lambda^{|I|},
  \end{equation}
  leading to the partition functions
  \[
    Z = \sum_{I \in \mathcal{I}(G)} \omega(I)\qquad\text{and}\qquad \hat{Z} = \sum_{I \in \mathcal{I}(G)} \hat{\omega}(I) = (1+\lambda)^{n/2} \left (\Xi^{\mathcal{O}} + \Xi^{\mathcal{E}}\right),
  \]
  and the measures
  \[
    \mu(I) = \frac{\omega(I)}{Z} \qquad\text{and}\qquad \hat{\mu}(I) = \frac{\hat{\omega}(I)}{\hat{Z}} = \hat{\mu}^*(\mathcal{O}, I) + \hat{\mu}^*(\mathcal{E}, I).
  \]

We now prove that $Z$ and $\hat{Z}$ are close to each other, that is,  the measure $\hat{\mu}$ defined via polymer models approximates the ``true'' hard-core measure $\mu$.
\begin{lemma} \label{l:approximateZ} Assume $\lambda\geq C_0 \frac{\log^2 d}{d^{1/2}}$ for some $C_0>0$. Then
\[
    \log Z - \log\left((1+ \lambda)^{n/2} \left(\Xi^\mathcal{O} + \Xi^\mathcal{E}\right)\right) = O\left(\exp\left(-\frac{n}{3d^{4}}\right)\right)
  \]
  and
  \[
    \norm{\hat{\mu} - \mu}_{TV} = O\left(\exp\left(-\frac{n}{3d^{4}}\right)\right).
  \]
\end{lemma}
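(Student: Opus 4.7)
The plan is to exploit the simple structural relationship between $\omega$ and $\hat{\omega}$. By \eqref{eq:omega:recall} and \Cref{prop:Icaptured}, the indicator $\mathbbm{1}_{\{I^\mathcal{O} \in \Omega^\mathcal{O}\}} + \mathbbm{1}_{\{I^\mathcal{E} \in \Omega^\mathcal{E}\}}$ equals $1$ on independent sets captured by exactly one polymer model, and $2$ on those captured by both. Hence $\hat{\omega}$ and $\omega$ agree except on doubly-captured $I$, where $\hat{\omega}(I) = 2\omega(I)$. This yields the clean identity
\[
  \hat{Z} - Z \; = \; \sum_{I \text{ captured by both}} \lambda^{|I|} \; \geq \; 0,
\]
so the whole problem reduces to showing $(\hat{Z} - Z)/\hat{Z}$ is exponentially small.

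To do this I would connect the doubly-captured independent sets to the event $\{\mathcal{D} \neq \mathcal{M}(I)\}$ already controlled by \Cref{l:minoritydefect}. From formula \eqref{eq:muhatstar}, for every $I$ captured on both sides one has $\hat{\mu}^*(\mathcal{O}, I) = \hat{\mu}^*(\mathcal{E}, I) = \lambda^{|I|}/\hat{Z}$; precisely one of these two pairs has $\mathcal{D} = \mathcal{M}(I)$ and the other has $\mathcal{D} \neq \mathcal{M}(I)$. Summing the latter contribution over all doubly-captured $I$ gives
\[
  \frac{\hat{Z} - Z}{\hat{Z}} \; = \; \sum_{I \text{ captured by both}} \frac{\lambda^{|I|}}{\hat{Z}} \; \leq \; \mathbb{P}_{\hat{\mu}^*}\bigl(\mathcal{D} \neq \mathcal{M}(I)\bigr) \; \leq \; \exp\!\left(-\tfrac{n}{3d^{4}}\right),
\]
by \Cref{l:minoritydefect}.

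The first statement is now immediate: since $Z \leq \hat{Z}$, the bound above gives
\[
  0 \; \leq \; \log \hat{Z} - \log Z \; \leq \; -\log\!\bigl(1 - \exp(-n/(3d^{4}))\bigr) \; = \; O\!\bigl(\exp(-n/(3d^{4}))\bigr),
\]
and by \eqref{e:hatZdef} we have $\hat{Z} = (1+\lambda)^{n/2}(\Xi^\mathcal{O} + \Xi^\mathcal{E})$.

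For the total variation distance I would split pointwise as
\[
  \hat{\mu}(I) - \mu(I) \; = \; \frac{\hat{\omega}(I) - \omega(I)}{\hat{Z}} \; - \; \omega(I) \cdot \frac{\hat{Z} - Z}{Z\,\hat{Z}},
\]
noting that the first term is non-negative and the second non-positive. The triangle inequality combined with $\sum_I (\hat{\omega}(I) - \omega(I)) = \hat{Z} - Z$ and $\sum_I \omega(I) = Z$ yields $\sum_I |\hat{\mu}(I) - \mu(I)| \leq 2(\hat{Z}-Z)/\hat{Z}$, so $\|\hat{\mu} - \mu\|_{TV} \leq (\hat{Z}-Z)/\hat{Z}$, and the bound from the second paragraph concludes both claims. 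The only real obstacle here is the bookkeeping identifying doubly-captured independent sets with the low-probability event in \Cref{l:minoritydefect}; the heavy lifting was done by the cluster-expansion tail bounds, and the rest is elementary algebra.
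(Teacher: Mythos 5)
Your proof is correct and follows essentially the same route as the paper: both reduce to the identity $\hat{Z} - Z = \sum_{I \in \mathcal{B}} \lambda^{|I|}$ over doubly-captured sets, identify this sum (normalised by $\hat{Z}$) with $\mathbb{P}_{\hat{\mu}^*}(\mathcal{D} \neq \mathcal{M}(I))$, and invoke \Cref{l:minoritydefect}. The one place you deviate is the total-variation bound, where you use the pointwise algebraic decomposition $\hat{\mu}(I) - \mu(I) = (\hat{\omega}(I) - \omega(I))/\hat{Z} - \omega(I)(\hat{Z}-Z)/(Z\hat{Z})$ together with the triangle inequality; the paper instead splits into $I \in \mathcal{B}$ and $I \notin \mathcal{B}$ and controls the two contributions separately. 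Your version is a bit cleaner and even gives the slightly sharper numerical constant $\norm{\hat{\mu}-\mu}_{TV} \le (\hat{Z}-Z)/\hat{Z}$, but it rests on exactly the same ingredients and does not change the structure of the argument.
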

\begin{proof}
  We start by comparing $\mu$ and $\hat{\mu}$. By \Cref{prop:Icaptured} and \eqref{eq:omega:recall}, we have $\hat{\omega}(I) \geq \omega(I)$ for every $I \in \mathcal{I}(G)$, with strict equality if and only if both $I^\mathcal{O} \in \Omega^{\mathcal{O}}$ and $I^\mathcal{E} \in \Omega^{\mathcal{E}}$, that is, if $I$ is captured by both the even and odd polymer models.
  Let $\mathcal{B}\subseteq \mathcal{I}(G)$ be the collection of all independent sets captured by both models. We may compute
\begin{equation}\label{eq:diff_partition}
    \hat{Z}- Z  = \sum_{I \in \mathcal{I}(G)} \left(\hat{\omega}(I) - \omega(I)\right) = \sum_{I \in \mathcal{B}} \lambda^{|I|}.
  \end{equation}
  Let $\mathcal{M}'(I)$ be the majority side for $I$, so that $\{\mathcal{M}(I), \mathcal{M}'(I)\} = \{\mathcal{O}, \mathcal{E}\}$. If $I \in \mathcal{B}$, then $I$ is counted by the polymer model for $\mathcal{M}'(I)$ as well. Therefore, recalling the definition of $\hat{\mu}^*$ in \eqref{eq:muhatstar}, we also have that 
  \begin{equation}\label{eq:majorityside}
  \frac{1}{\hat{Z}} \sum_{I \in \mathcal{B}} \lambda^{|I|} \leq \sum_{I \in \mathcal{I}(G)} \hat{\mu}^*(\mathcal{M}'(I), I) = 
  \mathbb{P}_{\hat{\mu}^*}(\mathcal{D} \neq \mathcal{M}(I)).
  \end{equation}
  Together with \eqref{eq:diff_partition}, we conclude $\hat{Z} - Z \leq \hat{Z} \cdot \mathbb{P}_{\hat{\mu}^*}(\mathcal{D} \neq \mathcal{M}(I))$. Note also that $Z \leq \hat{Z}$, since $\omega(I) \leq \hat{\omega}(I)$ pointwise.
  We may thus use Lemma~\ref{l:minoritydefect} to conclude
  \begin{equation}\label{e:Z_Zprime}
    \left(1-\exp\left(-\frac{n}{3d^{4}}\right)\right) \hat{Z} \leq Z \leq \hat{Z}.
  \end{equation}
  Using $e^{-2x} \leq 1-x$, valid for $x < 1/2$, and taking logarithms yields the first claim.
  
  Furthermore, we can bound the total variation distance between $\hat{\mu}$ and $\mu$, defined as in \eqref{eq:totalvariationdist}.
  Since $I \not\in \mathcal{B}$ implies $\hat{\omega}(I) = \omega(I)$, we have
  \begin{align}
    \norm{\hat{\mu} - \mu}_{TV} =  \sum_{\substack{I \in \mathcal{I}(G) \\ \hat{\mu}(I) > \mu(I)}} (\hat{\mu}(I) - \mu(I)) 
    &\leq \mathbb{P}_{\hat{\mu}}(I \in \mathcal{B}) + \sum_{I \not\in \mathcal{B}} \left|\frac{\omega(I)}{\hat{Z}} - \frac{\omega(I)}{Z}\right|.\label{e:TV_bound_prelim}
  \end{align}
  Using \eqref{eq:majorityside} we obtain
  \[ 
  \mathbb{P}_{\hat{\mu}}(I \in \mathcal{B}) = \sum_{I \in \mathcal{B}} \frac{2\lambda^{|I|}}{\hat{Z}} \leq 2 \cdot \mathbb{P}_{\hat{\mu}^*}(\mathcal{D} \neq \mathcal{M}(I)).
  \] 
  Applying~\eqref{e:Z_Zprime} to bound the summation on the right-hand side of~\eqref{e:TV_bound_prelim}, we conclude that
  \[
    \norm{\hat{\mu} - \mu}_{TV} \leq  \mathbb{P}_{\hat{\mu}}(I \in \mathcal{B}) + \sum_{I \not\in \mathcal{B}} \frac{\omega(I)}{Z} \cdot O\left(\exp\left(-\frac{n}{3d^{4}}\right)\right) =
    O\left(\exp\left(-\frac{n}{3d^{4}}\right)\right),
  \]
as desired.
\end{proof}

\section{Proof of \Cref{thm:general}} \label{sec:proof}

Recall that $L^{\mathcal{D}}_{k} \coloneqq \sum_{\Gamma \in \mathcal{C}^{\mathcal{D}}_{k}} w(\Gamma)$ where $\mathcal{C}^{\mathcal{D}}_{k}$ contains all clusters $\Gamma$ of size $\norm{\Gamma}=k$. Moreover, $C_0 \geq \lambda \geq C_0 \frac{\log^2 d}{d^{1/2}}$
  for some sufficiently large constant $C_0>0$ and $\lambda$ is bounded from above as $d \to \infty$.

\begin{proof}[Proof of \Cref{thm:general}]
  By definition of $L^{\mathcal{D}}_{\geq k+2}$ we have
  \[
    \Xi^{\mathcal{D}} = \exp\left(L^{\mathcal{D}}_{\leq k} + L^{\mathcal{D}}_{k+1} +L^{\mathcal{D}}_{\geq k+2}\right).
  \]
  Letting $\varepsilon' \coloneqq \log Z - \log \big((1+\lambda)^{n/2}(\Xi^{\mathcal{E}} + \Xi^{\mathcal{O}})\big)$, we have $\varepsilon' = O(\exp(-n/3d^{4}))$ by \Cref{l:approximateZ}. We may therefore write
  \begin{equation}\label{e:thm:general:first_approx}
    Z = (1+\lambda)^{n/2} \sum_{\mathcal{D} \in \{\mathcal{O}, \mathcal{E}\}} \exp\left(L^{\mathcal{D}}_{\leq k} + L^{\mathcal{D}}_{k+1} + L^{\mathcal{D}}_{\geq k+2} + \varepsilon' \right).
  \end{equation}
  We now proceed to bound the cluster expansion coefficients. We first show that for any $k\in \mathbb N$,
\begin{align}
    |L^{\mathcal{D}}_{k}|=O\left(\frac{n d^{2(k-1)} \lambda^k}{(1+\lambda)^{dk}}\right).\label{eq:boundonL}
\end{align}
    
  Let $k \in \mathbb N$ be fixed.
  Let $\Gamma \in \mathcal{C}^{\mathcal{D}}_{k}$ be a cluster of size $\norm{\Gamma}=k$.
  The vertex set $V(\Gamma) = \bigcup_{S \in \Gamma} S$ of the cluster $\Gamma$ is a $2$-linked set of size at most $k$.
  By \Cref{l:counting2linked} there are at most $n (ed^2)^{k-1}$ possible choices for $V(\Gamma)$.
  On the other hand, for every $X \subseteq V(G)$ of size at most $k$, there are only constantly many clusters $\Gamma$ such that $X=V(\Gamma)$ (recall that $k$ is fixed).
  So, in total there are $O\left(nd^{2(k-1)}\right)$ clusters of size $k$, in other words, $|\mathcal{C}^{\mathcal{D}}_{k}| =O\left(nd^{2(k-1)}\right)$.

Using $\norm{\Gamma}=\sum_{S \in \Gamma} |S|$ and $\sum_{S \in \Gamma}|N(S)| \ge |N\left(\bigcup_{S \in \Gamma} S\right)|$ we have
  \begin{align*}
    |w(\Gamma)| 
    &= |\phi(\Gamma)| \prod_{S \in \Gamma} \frac{\lambda^{|S|}}{(1+\lambda)^{|N(S)|}}
 \leq |\phi(\Gamma)| \frac{\lambda^{\norm{\Gamma}}}{(1+\lambda)^{|N\left(\bigcup_{S \in \Gamma} S\right)|}}.
  \end{align*}
Because the Ursell function of a cluster of fixed size is bounded from above by a constant (see \eqref{eq:Ursell}), and $|N\left(\bigcup_{S \in \Gamma} S\right)|\ge k(d-\Delta_2k)$, since the co-degree of $G$ is bounded by $\Delta_2$, the weight of each such cluster $\Gamma$ satisfies
  \begin{align*}
    |w(\Gamma)| & \leq |\phi(\Gamma)| \frac{\lambda^{\norm{\Gamma}}}{(1+\lambda)^{|N\left(\bigcup_{S \in \Gamma} S\right)|}} =   O\left(\frac{\lambda^k}{(1+\lambda)^{k(d-\Delta_2k)}}\right).
  \end{align*}
This implies \eqref{eq:boundonL}, since
 \[
|L^{\mathcal{D}}_{k}|=\Big|\sum_{\Gamma \in \mathcal{C}^{\mathcal{D}}_{k}} w(\Gamma)\Big| = O\left(nd^{2(k-1)} \cdot \frac{\lambda^k}{(1+\lambda)^{k(d-\Delta_2k)}}\right) = O\left(\frac{nd^{2(k-1)}\lambda^k}{(1+\lambda)^{kd}}\right) ,
\]
where the last equality holds because $\Delta_2$ and $k$ are constants and $\lambda$ is bounded. Also, by \Cref{l:Kotecky},
  \[
    |L^{\mathcal{D}}_{\geq k+2}| \leq \frac{nd^{8(k+2)}}{(1+\lambda)^{(k+2)d-\Delta_2 (k+2)^2}}.
  \]
  Observe that, since $(1+\lambda)^d = \exp(\Omega(d^{1/2} \log d)) = d^{\Omega(d^{1/2})}$, the error here is much smaller than our bound on $L^{\mathcal{D}}_{k+1}$, as we have
  \[
    |L^{\mathcal{D}}_{\geq k+2}| \leq \frac{n d^{2k} \lambda^{k+1}}{(1+\lambda)^{(k+1)d}} \cdot \frac{d^{6k+16}}{\lambda^{k+1} (1+\lambda)^{d-\Delta_2 (k+2)^2}} = o\left(\frac{n d^{2k} \lambda^{k+1}}{(1+\lambda)^{(k+1)d}}\right).
  \]
  Setting $\varepsilon^{\mathcal{D}}_{k} \coloneqq L^{\mathcal{D}}_{k+1} + L^{\mathcal{D}}_{\geq k+2} + \varepsilon'$. Since $n=\omega(d^{5})$ and $\lambda$ is bounded from above, we have $\varepsilon' = \exp(-\omega(d)) = o((1+\lambda)^{-(k+1)d})$. Therefore,
  \[
    |\varepsilon^{\mathcal{D}}_{k}| = O\left(\frac{nd^{2k} \lambda^{k+1}}{(1+\lambda)^{(k+1)d}}\right)
  \]
  and
  \begin{equation}\label{eq:pf-main-almost-final}
    Z = (1+\lambda)^{n/2} \sum_{\mathcal{D} \in \{\mathcal{O}, \mathcal{E}\}} \exp\left(\sum_{j=1}^k L^{\mathcal{D}}_{j} + \varepsilon^{\mathcal{D}}_{k}\right).
  \end{equation}
  We now compute $L_1$ explicitly. Since $G$ is $d$-regular and bipartite, its partition classes are balanced, and hence there are $n/2$ polymers $S$ of size $1$, each of which has weight
  \[
    w(S)= \frac{\lambda^{|S|}}{(1+\lambda)^{|N(S)|}}=\frac{\lambda}{(1+\lambda)^d}.
  \]
  Using that the Ursell function of the graph with a single vertex is $1$, for any $\mathcal{D} \in \{\mathcal{O}, \mathcal{E}\}$ we get
  \[
    L^{\mathcal{D}}_{1} \coloneqq \sum_{\Gamma \in \mathcal{C}^{\mathcal{D}}_1} w(\Gamma) = \frac{\lambda }{(1+\lambda)^d} \cdot \frac{n}{2}.
  \]
  Substituting this into \eqref{eq:pf-main-almost-final}, we have shown that
  \[
    Z(G, \lambda) = \left((1+\lambda)e^{\lambda/(1+\lambda)^d}\right)^{n/2}  \sum_{\mathcal{D} \in \{\mathcal{O}, \mathcal{E}\}}\exp\left(\mathbbm{1}_{k\geq 2}\cdot \sum_{j=2}^k L^{\mathcal{D}}_{j} + \varepsilon^{\mathcal{D}}_k\right)
  \]
  as claimed.
\end{proof}

\section{Expansion in product graphs: proof of \Cref{l:isoperimetry}}\label{sec:isoperimetry}

We will assume, without loss of generality, that $X \subseteq \mathcal{O}$.

\begin{proof}[Proof of \Cref{l:isoperimetry} \ref{i:isosmall}] \let\qed\relax
  In a Cartesian product graph any two vertices that differ in at least two coordinates have at most two common neighbours.
  For any two vertices that differ in exactly one coordinate, each common neighbour differs in the same coordinate. Since the size of the base graphs is by assumption at most $m$, this shows that any two vertices have at most $m$ common neighbours, proving the claim.
\end{proof}
\begin{proof}[Proof of \Cref{l:isoperimetry} \ref{i:isomedium}]\let\qed\relax
  Our proof will be by reduction to the hypercube case. Let $s$ be a constant and assume that $|X| \leq t^s$.
  Since each $H_i$ is bipartite and regular, each $H_i$ contains a perfect matching $M_i$.
  For each $i\in [t]$ let us write $m_i :=|M_i| = |H_i|/2$ and let $r :=\prod_{i=1}^t m_i$.
  The Cartesian product graph $M:=\square_{i=1}^t M_i$ is then a subgraph of $G$, and it is the union of $r$ vertex-disjoint $t$-dimensional hypercubes $G_1,G_2,\ldots, G_r$, each corresponding to the Cartesian product of some set of edges forming a transversal of the matchings $M_i$.
  Note that $\bigcup_{i=1}^r V(G_i) = V(G)$.

  Let $X_i=X \cap G_i$, so that $|X| = \sum_{i=1}^r |X_i|$.
  Since $|X_i| \leq |X| \leq t^s$ and $X \subseteq G_i \cap \mathcal{O}$, it follows from \cite[Lemma 6.2]{Ga03} that there is a constant $c > 0$ such that 
  \[
    |N_{G_i}(X_i)| \geq ct|X_i|.
  \]
  However, since $M \subseteq G$ and the $G_i$ are vertex-disjoint, it follows that
  \[
    |N_G(X)| \geq \left| \bigcup_{i=1}^r  N_{G_i}(X_i)\right| = \sum_{i=1}^r |N_{G_i}(X_i)| \geq \sum_{i=1}^r ct|X_i| = ct|X|.
  \]
\end{proof}
\begin{proof}[Proof of \Cref{l:isoperimetry} \ref{i:isolarge}]
  Assume that $|X|=\beta \frac{n}{2}$ for some $\beta \in (0, 1]$.
  Let us write $C = 2 \sqrt{2}$.
  Our proof is by induction on $t$.
  
  If $t=1$, then $G=H_1$ is a connected, bipartite, $d$-regular graph.
  In particular, $G$ has a perfect matching.

  If $X = \mathcal{O}$, then $|X| = n/2$ and $\beta = 1$. So \eqref{e:isolarge} follows from the fact that $G$ is connected such that $|N(X)|=|\mathcal{E}|=n/2$.
  Otherwise, since $G$ is connected and regular, by Hall's theorem it must be the case that $|N(X)| > |X|$ and hence
  \[
    |N(X)| \geq |X|+1 = |X| \left(1+ \frac{1}{|X|}\right) \geq |X|\left(1+ \frac{2}{\beta m}\right)\geq |X|\left(1+ \frac{C(1-\beta)}{m\sqrt{t}}\right),
  \]
  since $t = 1$, $|X| =\beta n/2 \leq \beta m /2$, $\beta(1-\beta) \leq 1/4$ and $C\leq 8$.

  Let us assume that the statement holds for Cartesian product graphs of dimension at most $t-1$.
  For each $i \in [t]$ let us write $G_i = \square_{j=1}^i H_j$, so that $G_t = G$.
  Note that each $G_i$ is bipartite, so we denote its partition classes by $\mathcal{O}_i,\mathcal{E}_i$.

  Given $v \in V(H_t)$ and $A \subseteq \mathcal{O}$ let
  \[
    A_v \coloneqq \{(x_1, ..., x_{t-1}) \in V(G_{t-1}) \;|\; (x_1, ..., x_{t-1}, v) \in A\}.
  \]
  Note that $\{ X_v \colon v \in V(H_t) \}$ is a partition of $X$.
  Let $\beta_v := 2|X_v|/|G_{t-1}|$, noting that $\beta_v \in [0,1]$, since $X_v \subseteq \mathcal{O}_{t-1}$ or $X_v \subseteq \mathcal{E}_{t-1}$.
  By the induction hypothesis,
  \[
    \left|N_{G_{t-1}}(X_v)\right| \geq |X_v| \left(1+\frac{C(1- \beta_v)}{m\sqrt{t-1}}\right).
  \]

  Suppose that there exist $v$, $w \in V(H_t)$ such that $\beta_v \geq \beta_w + \delta$, and let $(v_0, \ldots, v_k)$ be a path in $H_t$ connecting them.
  Letting $J := \{i \in [k] : \beta_{v_{i-1}} \geq \beta_{v_i}\}$, we observe that
  \[
    \frac{2}{|G_{t-1}|}\sum_{i \in J} (|X_{v_{i-1}}| - |X_{v_i}|) = \sum_{i\in J} (\beta_{v_{i-1}}-\beta_{v_i}) \geq \beta_v - \beta_w \geq \delta.
  \]
  Therefore, since $|N(X)_v| \geq |X_w|$ whenever $vw \in E(H_t)$, we have
  \begin{equation}\label{eq:neigh-gain-path}
    \sum_{i\in J} |N(X)_{v_i}| \geq \sum_{i \in J} |X_{v_{i-1}}| \geq \frac{\delta|G_{t-1}|}{2} + \sum_{i\in J} |X_{v_i}| \geq \frac{\delta |X|}{\beta m} + \sum_{i\in J} |X_{v_i}|,
  \end{equation}
  where the last inequality follows from $|G_{t-1}| = |G_t|/|H_t| \geq |G_t|/m$.
  Since $|N(X)_v| \geq |X_v|$ for every $v \in H_t$, we may add to \eqref{eq:neigh-gain-path} the contribution from vertices in $R \coloneqq V(H_t) \setminus \{v_i : i \in J\}$ to obtain
  \[
    |N(X)| = \sum_{v \in V(H_t)} |N(X)_v|  =\sum_{i \in J} |N(X)_{v_i}| + \sum_{v\in R} |N(X)_v| \geq \frac{\delta |X|}{\beta m} + \sum_{v\in V(H_t)} |X_v| = |X|\left( 1+  \frac{\delta}{\beta m}\right)
  \]
  and so taking $\delta \coloneqq C\beta(1- \beta)/\sqrt{t}$, the result would follow.

  Hence, we may assume that for all $v,w \in V(H_t)$, $|\beta_v - \beta_w| \leq \delta$.
  Applying the induction hypothesis to each $X_v$ we can conclude that
  \begin{equation}\label{e:isolarge:induction-step}
    |N(X)| \geq |X| +  \sum_v C |X_v|\frac{1-\beta_v}{m \sqrt{t-1}} = |X| +  \frac{|G_t|}{2|H_t|}\sum_v C \frac{\beta_v(1-\beta_v)}{m \sqrt{t-1}}.
  \end{equation}
  So we need to show that, for $g(x) = x(1-x)=x-x^2$,
  \begin{equation}\label{e:isolarge:induction-calculation}
    \frac{1}{|H_t|} \sum_v g(\beta_v) \geq g(\beta) \sqrt{\frac{t-1}{t}} = g(\beta) \sqrt{1 - \frac{1}{t}}.
  \end{equation}

  Since $\frac{1}{|H_t|}\sum_{v \in V(H_t)} \beta_v = \beta$ and $\max_v \beta_v - \min_v \beta_v \leq \delta$, we have \[\frac{1}{|H_t|} \sum_{v \in V(H_t)} \beta_v(1-\beta_v) = \beta(1-\beta) - \frac{1}{|H_t|}\sum_{v \in V(H_t)} \left(\beta_v-\beta\right)^2 \geq \beta(1-\beta) - \frac{\delta^2}{4},\] where the last step is an application of Popoviciu's inequality (Lemma~\ref{l:Popoviciu}).
  Substituting the value of $\delta$, we obtain
  \[
    \frac{1}{|H_t|}  \sum_v g(\beta_v) \geq g(\beta)\left(1  - \frac{C^2 \beta(1-\beta)}{4n}\right)
    \geq g(\beta)\left(1 - \frac{C^2}{16t} \right)
    \geq g(\beta) \left(1- \frac{1}{2t}\right),
  \]
  using the fact that $\beta(1-\beta) \leq \frac{1}{4}$ and since $C^2 \leq 8$.
  Since $\sqrt{1-x} \leq 1 - \frac{x}{2}$ for $x \in [0,1]$, this implies~\eqref{e:isolarge:induction-calculation}, proving~\eqref{e:isolarge:induction-step} and finishing the induction step.
\end{proof}

\section{Cartesian power of complete bipartite graphs: proof of \Cref{thm:bipartiteproduct}} \label{sec:productcomplete}

Throughout this section we will fix some $s \in \mathbb{N}$ and consider the $t$-th Cartesian power $G=\car_{i=1}^t K_{s, s}$ of a complete bipartite graph $K_{s,s}$ as $t \to \infty$.
Note that $G$ is $d$-regular with $d=st$, and it has $n=(2s)^t$ vertices.

By \Cref{thm:general} with $k=2$ we have
\begin{equation}  \label{eq:bipartiteproduct}
  Z(G, \lambda) = 2(1+\lambda)^{n/2} \exp\left(\frac{n \lambda }{2(1+\lambda)^d} + L_2 +\varepsilon_2\right),
\end{equation}
with $|L_2|=O\left(\frac{n d^2 \lambda^{2} }{(1+\lambda)^{2d}}\right)$ and $|\varepsilon_2| =O\left(\frac{n d^{4} \lambda^{3} }{(1+\lambda)^{3d}}\right)$. Note that since $K_{s,s}$ is symmetric we will compute the coefficients for the even side (suppressing the $\mathcal{E}$ superscript) and multiply by a factor of two.

Taking $\lambda=1$ in \eqref{eq:bipartiteproduct} leads to the following bound on $i(G)$
\begin{equation*}
  i(G)=Z(G, 1) =2^{1 + (2s)^t/2} \exp\left(\frac{(2s)^t}{2^{st+1}} + O\left(\frac{(2s)^t (st)^2}{2^{2st}}\right)\right).
\end{equation*}

Next we refine the above bound on $i(G)$ by calculating the next term $L_2$.
Note that $L_2$ is a sum over all clusters $\Gamma$ with $\norm{\Gamma} = \sum_{S \in \Gamma} |S|=2$.
There are several possibilities for the structure of a cluster of size $2$, see \Cref{fig:powerKmm}, giving the following cases:

\begin{enumerate}[$(i)$]
\item \label{twice-same-polymer} $\Gamma$ contains twice the same polymer $S$ of size $1$;
\item \label{one-polymer-size-two} $\Gamma$ contains one polymer of size $2$;
\item \label{two-different-polymers} $\Gamma$ contains two different polymers of size $1$.
\end{enumerate}
In each case we need to count the number of clusters of the given type and evaluate their weight.

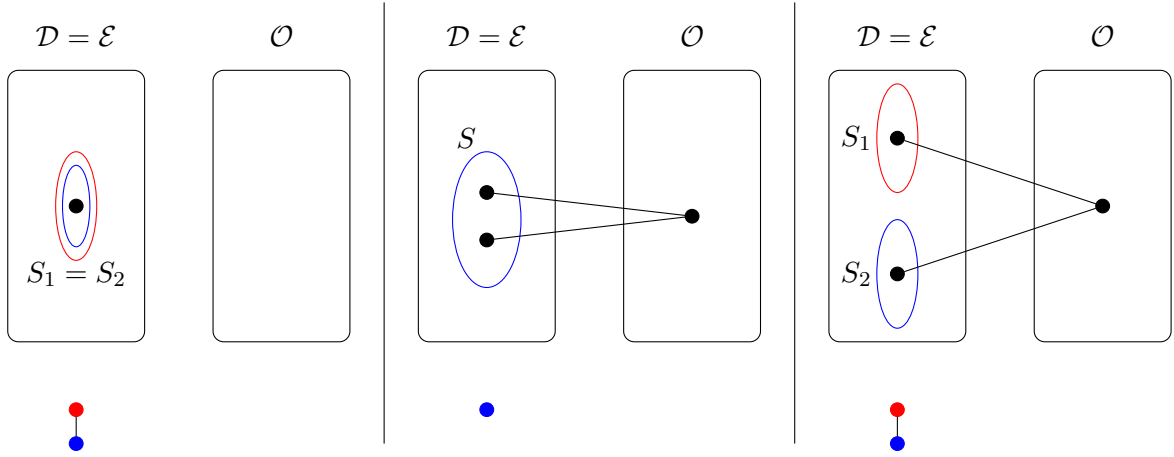
\begin{figure}
  \centering
  \begin{tikzpicture}[scale=0.9]
    \path[draw=black] (4.5,-1.5) -- (4.5,5);
    \path[draw=black] (10.5,-1.5) -- (10.5,5);

    %%% case twice-same-polymer

    \path[draw=black,rounded corners] (-1,0) -- (-1,4) -- (1,4) -- (1,0) -- cycle;
    \path[draw=black,rounded corners] (2,0) -- (2,4) -- (4,4) -- (4,0) -- cycle;
    \node[] at (0,4.5) {$\mathcal{D}=\mathcal{E}$};
    \node[] at (3,4.5) {$\mathcal{O}$};

    \draw[blue] (0, 2) ellipse (0.2cm and 0.6cm);
    \draw[red] (0, 2) ellipse (0.3cm and 0.8cm);
    \node (Y1) at (0, 2) [circle,draw, fill, scale=0.5] {};
    \node[] at (0,1) {$S_1=S_2$};

    \node (D) at (0, -1) [circle,draw=red, fill=red, scale=0.5] {};
    \node (E) at (0, -1.5) [circle,draw=blue, fill=blue, scale=0.5] {};
    \path[draw=black] (D) -- (E);
    
    %%% case one-polymer-size-two
    
    \path[draw=black,rounded corners] (5,0) -- (5,4) -- (7,4) -- (7,0) -- cycle;
    \path[draw=black,rounded corners] (8,0) -- (8,4) -- (10,4) -- (10,0) -- cycle;
    \node[] at (6,4.5) {$\mathcal{D}=\mathcal{E}$};
    \node[] at (9,4.5) {$\mathcal{O}$};

    \draw[blue] (6, 1.8) ellipse (0.5cm and 1cm);
    \node (Y1) at (6, 1.5) [circle,draw, fill, scale=0.5] {};
    \node (Y2) at (6, 2.2) [circle,draw, fill, scale=0.5] {};
    \node (Y3) at (9, 1.85) [circle,draw, fill, scale=0.5] {};
    \path[draw=black] (Y1) -- (Y3) -- (Y2);
    \node[] at (5.7,3) {$S$};

    \node (A) at (6, -1) [circle,draw=blue, fill=blue, scale=0.5] {};
    
    %%% case two-different-polymers

    \path[draw=black,rounded corners] (11,0) -- (11,4) -- (13,4) -- (13,0) -- cycle;
    \path[draw=black,rounded corners] (14,0) -- (14,4) -- (16,4) -- (16,0) -- cycle;
    \node[] at (12,4.5) {$\mathcal{D}=\mathcal{E}$};
    \node[] at (15,4.5) {$\mathcal{O}$};

    \draw[blue] (12, 1) ellipse (0.3cm and 0.8cm);
    \draw[red] (12, 3) ellipse (0.3cm and 0.8cm);
    \node (Y1) at (12, 1) [circle,draw, fill, scale=0.5] {};
    \node (Y2) at (12, 3) [circle,draw, fill, scale=0.5] {};
    \node (Y3) at (15, 2) [circle,draw, fill, scale=0.5] {};
    \path[draw=black] (Y1) -- (Y3) -- (Y2);
    \node[] at (11.4,3) {$S_1$};
    \node[] at (11.4,1) {$S_2$};

    \node (B) at (12, -1) [circle,draw=red, fill=red, scale=0.5] {};
    \node (C) at (12, -1.5) [circle,draw=blue, fill=blue, scale=0.5] {};
    \path[draw=black] (B) -- (C);
  \end{tikzpicture}
  \caption{From left to right the three possibilities for the structure of a cluster of size $2$ and the corresponding incompatibility graphs.}
  \label{fig:powerKmm}
\end{figure}

To motivate and illustrate the main ideas of the more general algorithm presented in \Cref{sec:computation}, we will count clusters by constructing a list of `prototypical' examples of clusters of size two with their weights. Then we will count the ways to embed these sets into the graph $G$.
Note that each cluster of size two spans a $2$-linked set of vertices of size at most two.
Thus, we need a (minimal) list of `compressed' $2$-linked sets $S_1, ..., S_{\ell}$ of size at most two such that for every $2$-linked set $T$ of size at most two there is an automorphism of $G$ that maps $T$ to some set $S_i$ from our list preserving the neighbourhood structure.

For each compressed $2$-linked set $S_1, ..., S_{\ell}$, we build all possible clusters $\Gamma_{i,1},..., \Gamma_{i, m(i)}$ such that $\bigcup_{S \in \Gamma_{i,j}} S = S_i$ for $i \in [\ell]$ and $j \in [m(i)]$. If $S_i$ admits $\alpha_i$ rooted embeddings (i.e., embeddings with a distinguished vertex) into $G$, then $\Gamma_{i, j}$ contributes $\frac{\alpha_i}{|S_i|} w(\Gamma_{i, j}) \phi(\Gamma_{i, j})$ to $L_2$, where $\phi$ denotes the Ursell function defined in~\eqref{eq:Ursell}. Thus, we have
\begin{equation}\label{eq:l2-compute}
L_2= \sum_{i=1}^{\ell} \frac{\alpha_i}{|S_i|} \sum_{j=1}^{m(i)} w(\Gamma_{i, j}) \phi(\Gamma_{i, j}).
\end{equation}
The embeddings we consider will respect the bipartition, mapping even (resp.\ odd) vertices to even (resp.\ odd) vertices.
For $G=\car_{i=1}^t K_{s, s}$, we label the vertices in $K_{s,s}$ with $0, 1, ..., 2s-1$ such that one bipartition class contains all even indices. Then a list of compressed $2$-linked sets in $G$ is given by the following collection:
\[
S_1=\{(0,...,0)\}, \qquad S_2=\{(0, ..., 0), (1, 1, 0, ..., 0) \}, \qquad S_3=\{(0, ..., 0), (2, 0, ..., 0)\}.
\]

The way we generate these sets is by choosing a root (in this case the vertex $(0,...,0)$) and then computing all possibilities to build a $2$-linked set containing the root and changing precisely a prefix of the coordinates (to a finite number of values, avoiding `gaps' in the used even/odd values in each coordinate). We will describe this in more detail in the next section.

There are $\alpha_1=\frac{n}{2}$ ways to embed $S_1$ into $G$ because there are $\frac{n}{2}$ ways to map the root. To embed $S_2$, we must pick where to map the root, as well as two out of $t$ coordinates to change and in each of these coordinates one out of $s$ values to change this coordinate to. Since these are odd coordinates, there are $\alpha_2= \frac{n}{2} \cdot \binom{t}{2} \cdot s^2$ ways to embed $S_2$ into $G$.
To embed $S_3$ we must embed the root, choose one out of $t$ coordinates to change, and one out of $s-1$ values that we can change this coordinate to (recall that we need to change to a vertex with the same parity as the corresponding vertex). Thus, there are $\alpha_3= \frac{n}{2} \cdot t \cdot (s-1)$ ways to embed $S_3$ into $G$. 

Now we generate the clusters that have these $2$-linked sets as underlying vertex sets. A simple calculation gives that the Ursell function $\phi$ as given in \eqref{eq:Ursell} of a single vertex is $1$ while the Ursell function $\phi$ of a single edge is $-\frac{1}{2}$.
In order to get a cluster from $S_1$ the only option is to pick $S_1$ twice. So,
\[
\Gamma_{1,1} = (S_1, S_1) \qquad \text{ with } \qquad w(\Gamma_{1,1})= -\frac{1}{2} \cdot \left( \frac{1}{2}\right)^{2d},
\]
since a vertex has $d$ neighbours and the incompatibility graph of this cluster is an edge. Note that $\Gamma_{1,1}$ represents Case~\ref{twice-same-polymer}.

In order to get a cluster from $S_2$, there are two options. Either we pick one polymer of size two or two different polymers of size one. In the first case we get
\[
\Gamma_{2,1} = (S_2) \qquad \text{ with } \qquad w(\Gamma_{2,1})= \left( \frac{1}{2}\right)^{2d-2},
\]
since $|N(S_2)|=2d-2$. The incompatibility graph of this cluster is a single vertex, as in Case~\ref{one-polymer-size-two}.
We can also form the cluster corresponding to Case~\ref{two-different-polymers}, which is
\[
\Gamma_{2,2} = (\{(0,...,0)\},\{(1,1,0,...,0)\}) \qquad \text{ with } \qquad w(\Gamma_{2,2})= - \frac{1}{2} \left( \frac{1}{2}\right)^{2d},
\]
since for the weight we multiply the weights of the single polymers each consisting of one vertex with $d$ neighbours and the incompatibility graph of this cluster is an edge. Recall that clusters are ordered tuples of polymers, so formally we also get $\Gamma_{2,3} = (\{(1,1,0,...,0)\}, \{(0,...,0)\})$. But this cluster is very similar to $\Gamma_{2,2}$, so we will take it into account by multiplying by a factor of $2$.

Similarly, for $S_3$ there are also two options. In the first case we get
\[
\Gamma_{3,1} = (S_3) \qquad \text{ with } \qquad w(\Gamma_{3,1})= \left( \frac{1}{2}\right)^{2d-s},
\]
since $|N(S_3)|=2d-s$. The incompatibility graph of this cluster is a single vertex.
In the second case we get
\[
\Gamma_{3,2} = (\{(0,...,0)\},\{(2,0,...,0)\}) \qquad \text{ with } \qquad w(\Gamma_{3,2})= - \frac{1}{2} \cdot \left( \frac{1}{2}\right)^{2d},
\]
since the incompatibility graph of this cluster is an edge.
As before, formally we also get a cluster $\Gamma_{3,3}=(\{(2,0,...,0)\}, \{(0,...,0)\})$ which we take into account by multiplying by a factor of $2$ when we count $\Gamma_{3,2}$.

Substituting the possible clusters (in the order as listed) with their weights and the number of ways to generate them into \eqref{eq:l2-compute} we obtain
\begin{equation} \label{eq:L2start}
    \begin{split}
         L_2 &=\phantom{+} w(\Gamma_{1,1}) \cdot \frac{n}{2} \\
    &\phantom{+} + w(\Gamma_{2,1}) \cdot \frac{ns^2}{4}\binom{t}{2}+ 2 \cdot w(\Gamma_{2,2}) \cdot \frac{ns^2}{4}\binom{t}{2} \\
    &\phantom{+} + w(\Gamma_{3,1}) \cdot \frac{nt(s-1)}{4} + 2 \cdot w(\Gamma_{3,2}) \cdot \frac{nt(s-1)}{4}.
    \end{split}
  \end{equation}
  In the calculation above, we already divided by $|S_i|$ to compensate for the fact that the vertex $(0,\ldots,0)$ was distinguished in the embeddings, as previously discussed.   
Plugging the weights as above into \eqref{eq:L2start} and simplifying the terms this becomes
\begin{equation} \label{eq:L2weights}
    \begin{split}
        L_2 &= -\left( \frac{1}{2}\right)^{2d+2} n \\
    &\quad + \left( \frac{1}{2}\right)^{2d}\cdot ns^2\binom{t}{2} - \left( \frac{1}{2}\right)^{2d+2} \cdot ns^2\binom{t}{2} \\
    &\quad +  \left( \frac{1}{2}\right)^{2d-s+2}\cdot nt(s-1) - \left( \frac{1}{2}\right)^{2d+2} \cdot nt(s-1).
    \end{split}
\end{equation}

Recalling that $n=(2s)^t$ and $d=st$, \eqref{eq:L2weights} can be simplified to
\begin{equation} \label{eq:L2final}
    \begin{split}
        L_2 &= 3 \binom{t}{2} \frac{(2s)^t s^2}{2^{2st+2}} + (2^s- 1) \frac{(2s)^t(s-1) t}{2^{2st+2}} - \frac{(2s)^t}{2^{2st+2}}\\
            &=\frac{(2s)^t}{2^{2st+2}} \left( 3\binom{t}{2} + (2^s-1)(s-1)t - 1\right).
    \end{split}
\end{equation}

Plugging \eqref{eq:L2final} into \eqref{eq:bipartiteproduct} and using that $\varepsilon_2=O\left((2s)^t (st)^4 \cdot 2^{-3st} \right)$ establishes \Cref{thm:bipartiteproduct}.

\section{Computing the partition function} \label{sec:computation}

The procedure from \Cref{sec:productcomplete} can be turned into an algorithm that computes $L_j$ for arbitrary $j \in \mathbb{N}$ for some Cartesian powers of a fixed vertex-transitive graph $H$. It can also be modified to deal with certain Cartesian powers of graphs which may depend on a parameter. Some sufficient conditions on the base graphs to allow for such an algorithm are discussed. We use the case $G = \car_{i=1}^t K_{s,s}$ to illustrate the required changes.

\subsection{Compressed sets} To compute $L_j$, the first step will be to compute the family of all $2$-linked subsets of size at most $j$ containing a distinguished vertex.
To turn this into a finite problem independent of the dimension $t$, we may follow the idea described in~\cite{JePe2020}. 
Given a defect side, which we may assume without loss of generality to be $\mathcal{E}$, we will fix a vertex $r \in \mathcal{E}$ called the root. We say a $2$-linked set $S$ containing the root is \emph{$r$-compressed} if the set of active coordinates, defined by
\[ A(S) \coloneqq \{i \in [t] : v_i \neq r_i\text{ for some }v \in S\}, \]
consists of the first $|A(S)|$ positive integers. 
Note that, since $S$ is $2$-linked, $|A(S)| \leq 2|S|$.
When the choice of root $r$ is clear, we will omit it from notation.

If the base graph has bounded size, as in~\cite{JePe2020}, one may generate all compressed sets of size up to $j$ in $e^{O(j \log j)}$ time. To do so, first notice that $A(S) \subseteq [2j]$ for every compressed set of size up to $j$, since in a spanning tree for $G^2[S]$ adjacent vertices differ in at most two coordinates. Let $\mathcal{L}_k$ denote the family of (not necessarily $r$-compressed) $2$-linked sets $S$ containing $r$ such that $A(S) \subseteq [2j]$ and $|S| = k$. By the $2$-linked condition, every element in $\mathcal{L}_{k+1}$ can be obtained by choosing one element of $\mathcal{L}_k$ and changing at most two coordinates of it. Therefore, $|\mathcal{L}_{k+1}| \leq |\mathcal{L}_k| \cdot \binom{2j}{2} v(H)^2$.
By induction, $|\mathcal{L}_j| = e^{O(j \log j)}$ since $\mathcal{L}_1 = \{\{r\}\}$. 

The above procedure is constructive, and one may check whether each element of $\bigcup_{k=1}^j \mathcal{L}_k$ is $r$-compressed in polynomial time per set, yielding the desired list. If implemented as described, however, the method above has a significant drawback: A single element of $\mathcal{L}_{k+1}$ will be obtainable from elements of $\mathcal{L}_k$ in multiple ways, and one must keep a list of the generated sets in memory to avoid overcounting, leading to exponential space usage. Algorithm~\ref{a:generatecompressed}, $\textsc{GenerateCompressed}$, is such that \textsc{GenerateCompressed}($N^2(r)$, $\{r\} \cup N^2(r)$, $\{r\}$) (and its recursive subcalls) outputs every compressed set of size at most $j$ \textit{exactly once}, using $e^{O(j \log j)}$ time and polynomial space.

\begin{center}
\begin{minipage}{.52\linewidth}
\begin{algorithm}[H]
  \caption{\textsc{GenerateCompressed}($Q$, $R$, $S$)}\label{a:generatecompressed}
  \begin{algorithmic}[1]
    \If{$S$ is compressed \textbf{and} $|S| \leq j$}
      \State{\textbf{output} $S$}
    \EndIf
    
    \If{$|S| < j$}\label{a:if-cond}
      \State let $(v_1, \ldots, v_k) = Q$
      \For{$i = 1, \ldots, k$}
        \State $Q' := (Q \setminus \{v_1, \ldots, v_i\}) \cup (N^2(v_i) \setminus R)$
        \State $R' := R \cup N^2(v_i)$
        \State $S' := S \cup \{v_i\}$
        \State \textsc{GenerateCompressed}($Q'$, $R'$, $S'$)
        \EndFor
      \EndIf
  \end{algorithmic} 
\end{algorithm}
\end{minipage}
\end{center}

As argued before, every compressed set $S$ of size up to $j$ satisfies $A(S) \subseteq [2j]$, so we may assume the algorithm only stores in memory the first $2j$ coordinates of every considered vertex, the remaining coordinates being implicit (and equal to the root). In particular, for $v \in V(G)$, the notation $N^2(v)$ refers to vertices $w$ at distance $2$ from $v$ such that $v_i = w_i$ for $i > 2j$.

Informally, vertices in $Q$ are pending a decision as to whether they will be added into $S$, and vertices of $R \setminus (S \cup Q)$ were in $Q$ at some point but were rejected.
More precisely, a recursive subcall of the form \textsc{GenerateCompressed}($Q$, $R$, $S$) outputs every compressed set $S'$ of size up to $j$ such that $S \subseteq S'$ and $S' \cap R \subseteq S \cup Q$.
The invariant $Q \subseteq R \subseteq S \cup N(N(S))$ is also maintained, and since only elements of $Q$ are added to $S$, this implies that the sets $S$ in the recursive calls are always $2$-linked.
Since every recursive call adds an element to $S$, the recursion depth is bounded by $j$.

It remains to estimate the complexity of the algorithm.
Note that an element added to $R$ in a recursive call is never removed in a subcall.
This ensures that a vertex added to $Q$ in a recursive call is never added again in a subcall, even if it was removed from $Q$ in the meantime.
Using this, it can be shown that each recursive call corresponds to a different value for $S$.
Therefore, the algorithm makes $|\bigcup_{k=1}^j \mathcal{L}_k|$ recursive calls, and each call spends time polynomial in $j$ since
\[
|Q \cup R \cup S| \leq |S \cup N(N(S))| \leq (1+(2j)^2\Delta(H)^2) |S| = O(j^3).
\]
This shows that the algorithm runs in time $e^{O(j \log j)}$, and uses polynomial space.

% In the algorithm, $Q$ is the queue of elements discovered but not processed, $R$ is the set of revealed elements, which will never be added to the queue again, $S$ is the $2$-linked set being constructed. $N^2(v)$ is understood to consider only neighbours at distance $2$ from v which differ from $S$ in the first $2j$ coordinates. 

In practice, it is useful to augment the condition in line \ref{a:if-cond} so that sets $S$ which are ``too far'' from being compressed are discarded early. For example, if $\big|[\max A(S)] \setminus A(S)\big| > 2(j - |S|)$, then no $2$-linked set $S'$ with $|S'| \leq j$ and $S \subseteq S'$ is such that $A(S')$ is a prefix of the positive integers, implying there is no need to consider the (potentially many) ways of extending $S$.

\subsubsection{Modifications when $H = K_{s,s}$} When the base graphs depend on a parameter $s$, it is important that the number of compressed sets is independent of this parameter\footnote{This is not always a problem. For example, when $H = C_{2s}$ for $v \in V(H)$, the set $\{w \in V(H) : d(v, w) \leq 2j\}$ has at most $4j$ elements. Therefore, the definition of compressed set does not need adaptation in this case.}. We will illustrate how to modify the definition of being compressed to handle the case $K_{s,s}$. As before, we will assume that the defect side is $\mathcal{E}$.
For a set $S$ to be compressed we will require that, for each coordinate $i \in [A(S)]$, the sets \[ O_i(S) \coloneqq \{v_i : v \in S\} \cap \mathcal{O} \qquad\text{and}\qquad E_i(S) \coloneqq \{v_i : v \in S\} \cap \mathcal{E} \] are prefixes of the odd and even non-negative integers, respectively. We may obtain many embeddings of a given compressed set $S$ into $\car_{i=1}^t H$ by choosing
\begin{itemize}
 \item $r' \in \mathcal{E}$, the image of $r = (0, \ldots, 0)$,
 \item an increasing sequence $1 \leq a_1 < \cdots < a_{|A(S)|} \leq t$ of active coordinates,
 \item for each $i\in [t]$, one sequence $0 < o_{i, 1} < \cdots < o_{i, |O_i(S)|} \leq 2s-1$ of odd integers,
 \item for each $i\in [t]$, one sequence $0 = e_{i, 0} < \cdots < e_{i, |E_i(S)|-1} \leq 2s-2$ of even integers.
\end{itemize}
Every such choice defines an embedding $\psi$ by
\[ 
\psi(v_1, \ldots, v_t)_j \coloneqq \begin{cases}
    r'_j &\text{if }j \neq a_i\text{ for every }i \in A(S), \\
    r'_j + o_{j,k} &\text{if }j = a_i\text{ and }v_i = 2k-1, \\
    r'_j + e_{j,k} &\text{if }j = a_i\text{ and }v_i = 2k,
\end{cases}
\]
where addition is understood to be modulo $2s$.
From the definition, it follows that the family $\Psi(S)$ of all embeddings with a distinguished vertex has size
\[
|\Psi(S, r)| = \frac{n}{2} \binom{t}{|A(S)|} \prod_{i=1}^{|A(S)|} \binom{s}{|O_i(S)|} \binom{s-1}{|E_i(S)|-1}.
\]
The expression for the embeddings themselves is not computationally relevant, but, to count sets exactly once, it is important that, for a given $r$, the family $\{\Psi(S, r) : S\text{ is compressed}\}$ partitions the family of $2$-linked sets of size at most $j$ with a distinguished root.
Only the quantity $|\Psi(S, r)|$ will be used. Since clusters do not have a distinguished root, we will later divide by $|S|$.

A key property we require from these embeddings is that, for every $\psi \in \Psi(S, r)$, the number of ways of splitting $\psi(S)$ into polymers, and the weight of the corresponding clusters, is the same. Therefore, to process the sets in $\Psi(S, r)$, we may consider clusters supported on $S$ and simply multiply the answer by $|\Psi(S, r)|/|S|$ in the end.

To generate compressed sets for $\car_{i=1}^t K_{s,s}$ using the $\textsc{GenerateCompressed}$ procedure, we may consider the root to be $(0, \ldots, 0)$, and we must interpret $N^2(v)$ with regard to the $(2j)$-th Cartesian power of $K_{j,j}$, i.e., $\car_{i=1}^{2j} K_{j,j}$, viewed as a subgraph of $\car_{i=1}^{t} K_{s,s}$ in the natural way by taking $V(K_{j,j}) = [2j]$, $V(K_{s,s}) = [2s]$ and appending enough zeros to account for the missing coordinates. One may check that, for $j \geq 1$, every compressed set of size at most $j$ is contained in this subgraph.

Before proceeding to the next step, we remark that it can be profitable to exploit more symmetries when defining compressed sets. In the above case, further requiring that $|O_i(S)| + |E_i(S)|$ is non-increasing in $i$, for example, reduces the number of compressed sets $S$ significantly and improves the execution time of the algorithm. We note that, for both definitions of compressed, the sets $S_1$, $S_2$ and $S_3$ in \Cref{sec:productcomplete} are the only $r$-compressed sets of size at most $2$ containing $r = (0,\ldots, 0)$.

\subsection{Clusters and the Ursell function}

Given a set $S$ of size at most $j$, for which we may assume $|A(S)| \subseteq [2j]$, our goal is to generate all clusters $\Gamma$ such that $\norm{\Gamma} = j$ and $\bigcup_{S' \in \Gamma} S' = S$. Since both the weight of a cluster and its Ursell function do not depend on the polymer order, we may deal with multisets and multiply by the appropriate multinomial coefficient.

We start by observing that there are at most $\exp^{O(j \log j)}$ ways to cover $S$ by sets whose sums of sizes is $j$. Indeed, this number is at most
\[ \sum_{k=1}^j \sum_{\substack{j_1 + \ldots + j_k = j \\ j_1, \ldots, j_k \geq 1}} \binom{|S|}{j_1} \cdots \binom{|S|}{j_k} \leq 2^{j-1} \cdot |S|^{j} = e^{O(j \log j)}. \]
A backtracking procedure which considers each subset of $S$ (checking if they are $2$-linked first) generates these covers. Indeed, as long as we immediately discard partial solutions $S'_1, \ldots, S'_\ell$ such that $|S \setminus \bigcup_{i=1}^{\ell} S'_i| + \sum_{i=1}^{\ell} |S'_i|  > j$, and process singletons only at the end, this procedure never hits a ``dead end'' and generates all clusters in time $e^{O(j \log j)}$ and polynomial space. 
If space is not a constraint, we found it useful to precompute the candidate polymers (subsets of $S$ which are $2$-linked), as well as the weight for each such set, and use only such sets in the backtracking procedure.

Recall that each cluster $\Gamma = (S_1, \ldots, S_k)$ with $\norm{\Gamma} = j$ contributes \[ 
\phi(\Gamma) \cdot \prod_{i=1}^k \frac{\lambda^{|S_i|}}{(1+\lambda)^{|N(S_i)|}} 
\] to the value of $L_j$, where $\phi$ is the Ursell function of the cluster. Note that in $G=\prod_{i=1}^t K_{s,s}$ the degree of the base graph depends on a parameter $s$. Nonetheless, the explicit structure of $K_{s,s}$ allows computing neighbourhood intersections efficiently. To do this, note that any two vertices which differ in one coordinate have exactly $s$ common neighbours.

Finally, we briefly discuss the computation of the Ursell function $\phi$. If $H = H(\Gamma)$ is the incompatibility graph of $\Gamma$, then $\phi(\Gamma)$ equals $\frac{(-1)^{v(H)-1}}{v(H)!} \cdot T_H(1, 0)$, where $T_H$ is the Tutte polynomial of $H$. We may specialise the deletion-contraction formula to obtain that, for connected $H$,
\begin{equation*}
    T_H(1, 0) = \begin{cases}
        1&\text{ if $H = K_1$},\\
        T_{H / e}(1,0)&\text{ if $e$ is a bridge},\\
        T_{H / e}(1,0) + T_{H \setminus e}(1,0)&\text{ otherwise},
    \end{cases}
\end{equation*}
where $H / e$ denotes the graph obtained by $H$ by contracting the edge $e$. Due to the fact that $T_H(1,0) = 0$ whenever $H$ contains a loop, we may perform the contraction procedure so that it deduplicates parallel edges. In this way, all graphs considered in recursive applications remain simple. 

As observed in~\cite{SeImTa95}, if we consider any sequence of deletions and contractions obtained in reducing $H$ to $K_1$, the contracted edges uniquely determine a spanning tree of $H$. Therefore, the number of leaves in the recursion tree for the computation of $T_H$ is at most the number of spanning trees of $H$, which is at most $j^{j-2}$ for a $j$-vertex graph  by Cayley's formula. Therefore, deletion-contraction may be performed in $e^{O(j \log j)}$ time and polynomial space. If extra space is available, caching results (including subproblems and taking isomorphisms into account) is very helpful in practice. See~\cite{HaPeRo10} for other practical aspects of computing Tutte polynomials.

An alternate procedure with similar complexity may be obtained by observing that, for any $v \in V(H)$, $T_H(1, 0)$ counts the number of acyclic orientations of $H$ with unique sink $v$, as originally shown by Greene and Zaslavsky~\cite{GrZa83} (see also~\cite[Theorem X.8]{Bollobas}). Trivially there are at most $j! = \exp^{O(j \log j)}$ such orientations, and it is possible to enumerate them at polynomial cost per orientation using polynomial space~\cite{CoGrMaRi18}.

We note that computing the Ursell function is not the bottleneck, both in theory and in practice. Indeed, there is an $e^{O(j)}$-time algorithm to compute Tutte polynomials~\cite{BjHuKaKo2008} of a $j$-vertex graph which uses polynomial space, and the authors of~\cite{BjHuKaKo2008} observed that it outperforms the deletion-contraction strategy in the worst case when $j \geq 13$. In the opposite direction, it was shown in~\cite{DeHuMaTaWa14} that under a weakening of the Exponential Time Hypothesis, namely assuming that there exists a constant $c > 0$ such that no deterministic algorithm can compute the number of solutions to a \textsc{3-Sat} problem with $n$ variables in time $e^{cn}$, computing the Ursell function of a $j$-vertex graph requires time $e^{\Omega(j / (\log j)^2)}$ in the worst case.

\subsection{Structural assumptions to allow efficient computation}

Let us discuss what base graphs allow for such an algorithm
and which properties are used.
The algorithm as presented computes first a list of compressed sets and the number of ways to embed them into the graph $G$. Then it computes the weight of each compressed set, which boils down to computing the neighbourhood of the compressed set.

In order to be able to compute efficiently, a finite number of compressed sets needs to represent every possible $2$-linked set. Let us make that more precise using an equivalence relation.
We say two rooted $2$-linked sets $(S_1, r_1) \sim (S_2, r_2)$ (where $r_i$ is the root of $S_i$, i.e., a distinguished vertex in $S_i$) are equivalent if there is a graph isomorphism $\varphi$ between $G[S_1 \cup N(S_1)]$ and $G[S_2 \cup N(S_2)]$ which satisfies $\varphi(S_1)=S_2$ and more specifically $\varphi(r_1)=r_2$.
A list of compressed sets then corresponds to a set of representatives of every equivalence class of this relation.
Formally, we thus require this equivalence relation to have a finite number of equivalence classes where a representative from each class is efficiently computable. 
We also need to efficiently compute the number of sets in an equivalence class since this corresponds to the number of embeddings into $G$.
In practice, many families of combinatorially-relevant graphs exhibit `local self-similarity', with smaller graphs in the family having `canonical' embeddings into bigger ones. In many such cases, the generation of compressed sets may be done with regards to a constant-size graph, as in the $\car_{i=1}^t K_{s,s}$ case described above.

Finally, one needs to compute the weight of clusters corresponding to the computation of neighbourhood sizes. If the base graphs have constant maximum degree, then computing $|N(S_i)|$ for each polymer may be done by simply listing neighbours whose active set of coordinates is a subset of $A(S_i)$. Alternatively, if we let $N_i(v) := \{w \in N(v): w_j = v_j\text{ for }j \neq i\}$, we may observe that, for a polymer $S$, each of the terms in the sum $\sum_{i \in A(S)} \bigl|\bigcup_{v \in S} N_i(v)\bigr|$ may be computed in time $\exp(O(k))$ using inclusion-exclusion. This sum overestimates $|N(S)|$, but it only overcounts vertices $w$ for which there exist $v_1, v_2 \in S$ and distinct $i, j \in A(S)$ such that $w \in N_i(v_1) \cap N_j(v_2)$. Since the $O(k^4)$ sets of the form $N_i(v_1) \cap N_j(v_2)$ each have cardinality $0$ or $1$ in a Cartesian product graph, we may correct the overcount in polynomial time. This strategy is useful when we know how to compute neighbourhood intersection sizes in the base graphs efficiently.

\subsection{Implementation} To test the above ideas, we have computed the first six coefficients of the cluster expansion for $G = \car_{i=1}^t K_{s,s}$, which can be found in Appendix~\ref{app:coefficients}. The source code is available at \url{https://github.com/collares/cluster-coefficients}. The approximate computing times for general $\lambda$ are listed in Figure~\ref{f:computing}.
\begin{figure}[h]
\centering
\begin{tabular}{rrrl}\toprule
    Coefficient & Compressed sets & Covers (multisets) & Approximate times \\\midrule
    $L_2$ & \num{3} & \num{5} & \\ % 7 clusters
    $L_3$ & \num{37} & \num{151} & 5 milliseconds \\ % 407 clusters
    $L_4$ & \num{1712} & \num{14954} & 0.4 seconds \\ % 88854 clusters
    $L_5$ & \num{187082} & \num{3338633} & 2.2 minutes \\ % 52070718 clusters
    $L_6$ & \num{36331337} & \num{1312882496} & 1.4 days \\\bottomrule % 63146931591 clusters
\end{tabular}
\caption{Computation times for first coefficients}\label{f:computing}
\end{figure}
 
The $d$-dimensional hypercube can be obtained by setting $s = 1$ and $t = d$, or by setting $s = 2$ and $t = d/2$. Therefore, we may extend the calculation of~\cite{JePe2020} to obtain the following extra coefficients for the hypercube.

\begingroup
\allowdisplaybreaks
\begin{align*}
    %L_1 \cdot 1 &= \frac{1}{2} \\
    %L_2 \cdot 2^{d} &= \frac{3}{8} \, d^{2} - \frac{3}{8} \, d - \frac{1}{4} \\
    %L_3 \cdot 2^{2 \, d} &= \frac{9}{16} \, d^{4} - \frac{37}{24} \, d^{3} - \frac{1}{16} \, d^{2} + \frac{25}{24} \, d + \frac{1}{6} \\
    L_4 \cdot 2^{3 \, d} &= \frac{9}{8} \, d^{6} - \frac{21}{4} \, d^{5} + \frac{509}{64} \, d^{4} - \frac{2201}{96} \, d^{3} + \frac{3683}{64} \, d^{2} - \frac{3691}{96} \, d - \frac{1}{8}, \\
    L_5 \cdot 2^{4 \, d} &= \frac{675}{256} \, d^{8} - \frac{1125}{64} \, d^{7} + \frac{6767}{128} \, d^{6} - \frac{15593}{120} \, d^{5} - \frac{234607}{768} \, d^{4} \\
    &\quad + \frac{194825}{64} \, d^{3} - \frac{1134821}{192} \, d^{2} + \frac{783331}{240} \, d + \frac{1}{10}, \\
    L_6 \cdot 2^{5 \, d} &= \frac{2187}{320} \, d^{10} - \frac{3807}{64} \, d^{9} + \frac{16749}{64} \, d^{8} - \frac{10777}{16} \, d^{7} - \frac{782797}{480} \, d^{6} + \frac{702581}{480} \, d^{5} + \frac{3515827}{32} \, d^{4} \\
    &\quad - \frac{30903735}{64} \, d^{3} + \frac{174959297}{240} \, d^{2} - \frac{85287403}{240} \, d - \frac{1}{12}.
\end{align*}
\endgroup

The coefficients obtained for $G=\car_{i=1}^t K_{s,s}$ can be found in \Cref{app:coefficients}.

\section{Discussion} \label{sec:discussion}

The proof methods presented rely substantially on the assumption that the graph is regular, bipartite and has good expansion properties.
Note that all these assumptions are crucial for our application of the cluster expansion method.
This holds in particular for the assumption that the base graphs are bipartite.
For example, even for the Cartesian product of triangles the asymptotic number of independent sets is not known.
The following conjecture was made by Arras and Joos \cite{ArrasJoos23}.
\begin{conjecture}
  Let $G= \car_{i=1}^t K_3$. Then
  \[
    i(G)=(1+o(1))3 \cdot 2^{t-1} \cdot 2^{3^{t-1}} \exp((3/2)^{3^t-1}).
  \]
\end{conjecture}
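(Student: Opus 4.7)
The plan is to adapt the defect polymer framework of \Cref{sec:strategy} to the non-bipartite setting using the natural $3$-colouring $\chi(v) = \sum_{i=1}^t v_i \pmod{3}$ of $G = \car_{i=1}^t K_3$, which partitions $V(G)$ into three colour classes $\mathcal{D}_0, \mathcal{D}_1, \mathcal{D}_2$, each of size $3^{t-1}$. Every independent set $I \in \mathcal{I}(G)$ decomposes with respect to a choice of \emph{majority} colour class $\mathcal{D}_i$ as $I = (I \cap \mathcal{D}_i) \cup D$, where the \emph{defect} $D \coloneqq I \cap (\mathcal{D}_j \cup \mathcal{D}_k)$ is an independent set in the bipartite subgraph $G_i \coloneqq G[\mathcal{D}_j \cup \mathcal{D}_k]$ (here $\{j,k\} = \{0,1,2\} \setminus \{i\}$), which is $t$-regular. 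The first step is to define, for each $i \in \{0,1,2\}$, a polymer model whose polymers are $2$-linked (in $G$) subsets $S \subseteq \mathcal{D}_j \cup \mathcal{D}_k$ that are themselves independent in $G$, with weights $w(S) \coloneqq \lambda^{|S|}/(1+\lambda)^{|N_G(S) \cap \mathcal{D}_i|}$. Summing over the three choices of majority class and accounting for the extensions of each defect configuration into $\mathcal{D}_i$ yields an approximate partition function $\hat{Z} = (1+\lambda)^{3^{t-1}}(\Xi^{\mathcal{D}_0} + \Xi^{\mathcal{D}_1} + \Xi^{\mathcal{D}_2})$, from which the factor of $3$ in the conjectured formula should arise by the symmetry of $G$.

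The next step is to verify an analogue of the Koteck\'y--Preiss condition for each $\Xi^{\mathcal{D}_i}$, which requires isoperimetric inequalities for sets $X \subseteq \mathcal{D}_j \cup \mathcal{D}_k$ expanding into $\mathcal{D}_i$. For small sets this follows from the fact that each vertex of $\mathcal{D}_j \cup \mathcal{D}_k$ has exactly $t$ neighbours in $\mathcal{D}_i$, and the inductive argument of \Cref{l:isoperimetry} should adapt to give $(1+\Omega(1/\sqrt{t}))$-type expansion for large sets. A container-type lemma analogous to \Cref{l:container} tailored to the asymmetric situation (sets in $\mathcal{D}_j \cup \mathcal{D}_k$ with neighbourhood measured in $\mathcal{D}_i$) would then yield convergence of the cluster expansion. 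Following \Cref{sec:proof}, the leading cluster expansion coefficient contributes $L_1^{\mathcal{D}_i} = 2 \cdot 3^{t-1} \cdot \lambda (1+\lambda)^{-t}$, which at $\lambda = 1$ evaluates to $(3/2)^{t-1}$, explaining the exponential correction in the conjectured formula. To pass from $\hat{Z}$ to $Z(G,1)$, one adapts \Cref{l:minoritydefect} to show that an independent set drawn from $\hat{\mu}$ has a unique majority class with overwhelming probability, yielding the $(1+o(1))$ factor; here the fact that a typical random independent set in $G_i$ has size $\Theta(3^{t-1}/2^t)$ while $\mathcal{D}_i \setminus N(D)$ has size $\Theta(3^{t-1})$ makes a unique majority overwhelmingly likely.

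The main obstacle, and the reason this conjecture remains open, is twofold. First, polymers in this setting must be independent sets in the bipartite graph $G_i$, so the compatibility relation encodes not merely that the $2$-closures are disjoint but also that the union remains edge-free \emph{within} $\mathcal{D}_j \cup \mathcal{D}_k$; this tightens the polymer model significantly and breaks the clean identification between polymer configurations and subsets of one side used throughout \Cref{sec:strategy}. Second, the container lemma from \cite{JeMaPa24} is phrased for regular bipartite graphs, and adapting it to bound $\sum \lambda^{|A|} / (1+\lambda)^{|N_G(A) \cap \mathcal{D}_i|}$ over $2$-linked sets $A$ in a single colour class or pair of colour classes requires revisiting its proof, since the natural `flow' from $A$ to $N(A) \cap \mathcal{D}_i$ and back involves going through $\mathcal{D}_j \cup \mathcal{D}_k$ rather than directly across a bipartition. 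Finally, even assuming both obstacles can be overcome, recovering the precise prefactor of $3 \cdot 2^{t-1}$ appears to require a careful bookkeeping argument accounting for symmetries of $K_3^t$ that has no direct counterpart in the bipartite setting.
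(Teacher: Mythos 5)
The statement you address is not a theorem of the paper but an open conjecture (due to Arras and Joos), cited in \Cref{sec:discussion} precisely to mark the boundary of the techniques: the paper offers no proof, and explicitly remarks that the bipartite assumption is essential to its method. You recognise this and present a sketch rather than a proof, which is the right framing, and your technical worries (polymers must be independent in $G[\mathcal{D}_j\cup\mathcal{D}_k]$, the container lemma needs an asymmetric variant) are legitimate.

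However, there is a more fundamental gap than the ones you list, and it is the actual reason the conjecture does not follow by the bipartite template. You set up the defect model around the three colour classes of the single coordinate-sum colouring $\chi(v)=\sum_i v_i \bmod 3$, so $\hat Z = (1+\lambda)^{3^{t-1}}\bigl(\Xi^{\mathcal{D}_0}+\Xi^{\mathcal{D}_1}+\Xi^{\mathcal{D}_2}\bigr)$, which at $\lambda=1$ gives $(1+o(1))\cdot 3\cdot 2^{3^{t-1}}\exp\bigl((3/2)^{t-1}\bigr)$. The conjectured answer carries an extra factor of $2^{t-1}$, which you note you cannot account for. Its source is structural: $\car_{i=1}^t K_3$ has not $3$ but exactly $3\cdot 2^{t-1}$ maximum independent sets for $t\geq 1$, namely the colour classes of all affine proper $3$-colourings $x\mapsto a_0+\sum_{i=1}^t a_i x_i \bmod 3$ with $a_0\in\{0,1,2\}$ and $a_i\in\{1,2\}$ for $i\geq 1$ (two tuples $\pm(a_0,\dots,a_t)$ give the same class, whence $3\cdot 2^t/2$); for $t=2$ these are the six permutation matrices of the $3\times 3$ rook's graph, for $t=3$ the twelve Latin squares of order $3$. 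Each is of size $3^{t-1}$, each outside vertex has exactly $t$ neighbours in it, and each is an equally good ground state. A correct polymer model must decompose independent sets relative to all $3\cdot 2^{t-1}$ of these, and the genuinely new difficulty is deciding which ground state captures a given $I$ and controlling the overcount from pairs of ground states that intersect in $3^{t-2}$ vertices — problems with no counterpart in the two-class bipartite setting. A separate remark: your $L_1=(3/2)^{t-1}$ is the right analogue of the bipartite $L_1$, but the conjecture as printed reads $\exp\bigl((3/2)^{3^t-1}\bigr)$, which already at $t=2$ exceeds the trivial bound $2^9 > i(K_3\car K_3)=34$ by roughly ten orders of magnitude; it is evidently a typographical error for $\exp\bigl((3/2)^{t-1}\bigr)$, with which your computation is consistent.
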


\Cref{thm:general} holds for any $C_0 \geq \lambda \geq C_0 \frac{\log^2 d}{d^{1/2}}$ for sufficiently large $C_0$, the bottleneck being the application of the container lemma. Here we use a recently improved version (see \Cref{l:container}) by Jenssen, Malekshahian and Park  \cite{JeMaPa24} which works for all $\lambda$ as above. Further improvement on the range of $\lambda$ in such a container lemma would directly translate into a wider range of $\lambda$ for the results presented here (up to $\lambda=\omega\left(\frac{\log d}{d}\right)$).

The value of $\lambda$ is crucial for the structure of an independent set drawn according to the hard-core model. Indeed, for $\lambda=o(1/d)$ independent sets are quite unstructured \cite{We06}, whereas the results in this paper show that above $\lambda= \omega\left(\frac{\log^2 d}{d^{1/2}}\right)$ independent sets are highly structured, as they lie mostly on one side of the bipartition. It would be interesting to investigate whether, and under what conditions, there is a phase transition in the structure of independent sets as $\lambda$ ranges between these two regimes. Indeed, it is conjectured that the results of this paper would hold for any $\lambda= \tilde{\Omega}(1/d)$ \cite{Ga2011, JeMaPa24}, but as mentioned above only a proof of a container lemma for such $\lambda$ is missing.

In \Cref{thm:general}, large sets are required to expand by at least $(1 + \Omega(1/d))$ and this is crucial. Again, the bottleneck stems from the application of the container lemma, where this expansion is required to construct a sufficiently small family of containers already in the original approach by Galvin \cite{Ga2011}. Note that, in Cartesian product graphs such as the hypercube or even tori, we can guarantee even $(1 + \Omega(1/\sqrt{d}))$ expansion by \Cref{l:isoperimetry}.

Regarding the algorithmic aspects of \Cref{thm:general}, we wonder if there is a $\exp(O(j))$ time algorithm to compute $L_j$. In our approach, this would require finding a family of compressed $2$-linked sets of size $\exp(O(j))$ such that every $2$-linked set of size $j$ is represented by one of them. It would also require generating the corresponding clusters for each compressed set in $\exp(O(j))$ time. Both problems seem challenging.

Finally, in \cite{KrSp22}, Kronenberg and Spinka studied the asymptotic number of independent sets in the \emph{percolated hypercube} $(Q^t)_p$, by relating the expectation of the partition function of the hard-core model on a percolated graph to the partition function of the Ising model with a judicious choice of parameters.  We remark that their results are also being extended to bipartite, regular graphs with sufficient expansion in \cite{GeKaSaWdo}.

\subsection*{Acknowledgements}
The authors would like to thank Matthew Jenssen for his introduction to the cluster expansion method and fruitful discussions.
This research was funded in part by the Austrian Science Fund (FWF) [10.55776/F1002, 10.55776/P36131].
For open access purposes, the author has applied a CC BY public copyright license to any author accepted manuscript version arising from this submission.

\printbibliography

\appendix

\section{The container lemma} \label{app:container}

We dedicate this section to the derivation of \Cref{l:container} from the more general version recently proved by Jenssen, Malekshahian and Park \cite{JeMaPa24} (see Lemma 1.2 therein).

Let $\delta \geq 1$, $d_{\mathcal{E}} \geq d_{\mathcal{O}}$ and $G$ be a bipartite graph with parts $\mathcal{O}$ and $\mathcal{E}$. 
We say $G$ is \emph{$\delta$-approximately $(d_{\mathcal{E}}, d_{\mathcal{O}})$-biregular} if for any $v \in \mathcal{E}$ we have $d(v) \in [d_{\mathcal{E}}, \delta d_{\mathcal{E}}]$ and for any $v \in \mathcal{O}$ we have $d(v) \in [d_{\mathcal{O}}/\delta, d_{\mathcal{O}}]$. Fix $\mathcal{D}\in \{\mathcal{E}, \mathcal{O}\}$ to be the defect side and denote the non-defect side by $\mathcal{D}'$.
Furthermore, for $1 \leq \varphi \leq d_{\mathcal{D}'}/\delta -1$ let
\[
  m_{\varphi} \coloneqq \min\{|N(K)| \mid y \in \mathcal{D}', K \subseteq N(y), |K|> \varphi\}.
\]

As before set
\[\mathcal{G}^{\mathcal{D}}(a, b) \coloneqq \{A\subseteq \mathcal{D}: A \text{ 2-linked, } |[A]|=a, |N(A)|=b\}.\]

\begin{lemma}[Lemma 1.2, \cite{JeMaPa24}]\label{l:containernew}
  Given constants $\delta \geq 1, \delta', \delta'' >0$, there exist constants $c_0 > 0$ and $C_0 > 0$ such that the following holds. Let $G$ be a bipartite graph with parts $\mathcal{O}$ and $\mathcal{E}$ which is $\delta$-approximately $(d_{\mathcal{E}}, d_{\mathcal{O}})$-biregular with $m_{\varphi} \geq \delta'' \varphi d_{\mathcal{D}}$, where $\varphi=d_{\mathcal{D}'}/(2\delta)$. If 
  \[
  b-a \geq \max\left\{\frac{\delta' b}{d_{\mathcal{D}'}}, \frac{c_0 d_{\mathcal{D}}}{\log^2 d_{\mathcal{D}}}\right\}, \]
then for every $\lambda \geq (C_0 \log^2 d_{\mathcal{D}})/\sqrt{d_{\mathcal{D}}}$ it holds that
  \[
    \sum_{A \in \mathcal{G}^{\mathcal{D}}(a, b)} \lambda^{|A|} \leq |\mathcal{D}'| (1+\lambda)^b \exp\left(-\frac{(b-a) \log^2 d}{6d_{\mathcal{D}}}\right).
  \]
\end{lemma}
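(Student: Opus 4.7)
The plan is to prove this container-type bound via a Sapozhenko--Galvin-style encoding scheme, refined as in \cite{JeMaPa24} so as to reach the improved range $\lambda \geq C_0 (\log^2 d_\mathcal{D})/\sqrt{d_\mathcal{D}}$. For each $A \in \mathcal{G}^\mathcal{D}(a,b)$ I would attach a ``container'' pair $(F, W)$ with $F \subseteq [A]$, $W \supseteq N(A)$, and $A \subseteq U := \{v \in \mathcal{D} : N(v) \subseteq W\}$. The overall estimate then splits as
\[ \sum_A \lambda^{|A|} \leq \#\{(F,W)\} \cdot \max_{(F,W)} (1+\lambda)^{|U|}, \]
and the target bound follows provided that both the number of containers and the ``overshoot'' $|U| - a$ can be made small.

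I would construct $(F, W)$ greedily. Pick a seed vertex $y \in \mathcal{D}' \setminus N(A)$ (this seed produces the $|\mathcal{D}'|$ prefactor in the conclusion), and then iteratively enlarge $F$ by adding a carefully chosen subset of $N(y) \cap [A]$ whose joint $\mathcal{D}'$-neighborhood is as large as possible, updating $W := N(F)$ at each step. The $\delta$-approximate biregularity guarantees that each round adds $\Theta(d_\mathcal{D})$ new vertices to $W$, so the process terminates with $|F| = O\bigl((b-a)(\log d_\mathcal{D})/d_\mathcal{D}\bigr)$. The localized expansion hypothesis $m_\varphi \geq \delta'' \varphi d_\mathcal{D}$ is then used to show that the overshoot satisfies $|U| - a \leq \tfrac{1}{100}(b-a)$, the constant being chosen large enough that the subsequent counting closes. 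The hypothesis $b-a \geq c_0 d_\mathcal{D}/\log^2 d_\mathcal{D}$ ensures the greedy procedure makes at least one nontrivial step. A Kruskal--Katona-type argument bounds the number of resulting pairs by
\[ |\mathcal{D}'| \cdot \binom{|\mathcal{D}|}{|F|} \binom{d_\mathcal{D} |F|}{|W| - |N(F)|} \leq |\mathcal{D}'| \cdot \exp\bigl(C (b-a) (\log^2 d_\mathcal{D})/d_\mathcal{D}\bigr). \]

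Combining the two estimates and dividing through by $(1+\lambda)^b$ reduces the required conclusion to the scalar inequality $\tfrac{99}{100}(b-a) \log(1+\lambda) \geq (b-a)(\log^2 d_\mathcal{D})(C + 1/6)/d_\mathcal{D}$. Using $\log(1+\lambda) \geq \lambda/2$ for $\lambda = O(1)$ together with $\lambda \geq C_0 (\log^2 d_\mathcal{D})/\sqrt{d_\mathcal{D}}$, this holds once $C_0$ is large enough, since $1/\sqrt{d_\mathcal{D}}$ dominates $1/d_\mathcal{D}$ by a polynomial factor. The main obstacle is the overshoot bound $|U| - a \leq \tfrac{1}{100}(b-a)$: Galvin's classical argument only tolerates an overshoot of order $(b-a)/\lambda^2$, which forces $\lambda \geq \log d_\mathcal{D}/d_\mathcal{D}^{1/4}$. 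To reach the stronger range $\lambda \geq \log^2 d_\mathcal{D}/\sqrt{d_\mathcal{D}}$ one must extract from the hypothesis on $m_\varphi$ a much tighter control of $|U|$, valid for all relevant scales $\varphi$ simultaneously and with overshoot an arbitrarily small constant fraction of $b-a$. This tightening is precisely the technical innovation of Lemma 1.2 in \cite{JeMaPa24}, and it is where I would expect most of the work to concentrate.
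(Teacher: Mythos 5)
This statement is not proved in the paper at all: it is quoted verbatim as Lemma~1.2 of \cite{JeMaPa24}, and the only argument the paper supplies in connection with it (Appendix~\ref{app:container}) is a routine verification that a $d$-regular graph with bounded co-degree and $(1+c/d)$-expansion satisfies its hypotheses, so as to deduce the specialised \Cref{l:container}. Your proposal instead sketches a proof of the imported lemma itself, and as a proof it has a genuine gap that you yourself flag: the entire content of the lemma --- the reason it holds down to $\lambda \geq C_0(\log^2 d_\mathcal{D})/\sqrt{d_\mathcal{D}}$ rather than only in the classical Sapozhenko--Galvin range --- lies in the refined container construction controlling the overshoot $|U|-a$, and at exactly that point you write that the tightening ``is precisely the technical innovation of Lemma 1.2 in \cite{JeMaPa24}''. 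Deferring the one step that distinguishes the statement from its predecessors means the proposal does not constitute a proof; it is an outline of the classical scheme with the new ingredient left as a black box.

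Moreover, the quantitative bookkeeping in the sketch cannot be right as stated. If the number of container pairs were really $\exp\bigl(C(b-a)(\log^2 d_\mathcal{D})/d_\mathcal{D}\bigr)$ \emph{and} the overshoot were $|U|-a \leq \tfrac{1}{100}(b-a)$, then your final scalar inequality $\tfrac{99}{100}(b-a)\log(1+\lambda) \geq (b-a)(\log^2 d_\mathcal{D})(C+1/6)/d_\mathcal{D}$ would already hold for all $\lambda = \omega\bigl((\log^2 d_\mathcal{D})/d_\mathcal{D}\bigr)$, i.e.\ you would have ``proved'' a container lemma essentially at Galvin's conjectured threshold $\tilde{\Omega}(1/d)$, which is explicitly open (see \Cref{sec:discussion}). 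The true threshold $\lambda \gtrsim (\log^2 d_\mathcal{D})/\sqrt{d_\mathcal{D}}$ arises because the cost of the encoding and the achievable overshoot cannot simultaneously be made as small as your sketch assumes; the balance between them is exactly where the $\sqrt{d_\mathcal{D}}$ appears, and your outline does not capture that trade-off. For the purposes of this paper the correct treatment is simply to cite \cite{JeMaPa24}, as the authors do, and to supply only the hypothesis-checking needed for \Cref{l:container}.
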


We derive \Cref{l:container} by checking the assumptions of \Cref{l:containernew}.
\begin{proof}[Proof of \Cref{l:container}]
  Recall that $c > 0$ is such that every $X \subseteq \mathcal{D}$ with $|X|\leq n/4$ satisfies $|N(X)| \geq \left(1+\frac{c}{d}\right)|X|$.
  We take $\delta = 1$, $\delta' = c/2$ and $\delta'' = 1/(2\Delta_2)$, and apply \Cref{l:containernew} to obtain constants $c_0 > 0$ and $C_0 > 0$.
  Let $G$ be a graph fulfilling the assumptions of \Cref{l:container}, and let $\mathcal{O},\mathcal{E}$ be the partition classes of $G$.
  Since $G$ is $d$-regular, we have $d_{\mathcal{E}}=d_{\mathcal{O}}=d$ and therefore $G$ is $\delta$-approximately $(d,d)$-biregular. It remains to verify that $G$ satisfies the other conditions in the statement.
  
  Note that $\varphi = d/2$. We start by showing that $m_\varphi \geq \Delta_2 \cdot d^2/(4\Delta_2)$.
  Let $y \in \mathcal{D}'$ and consider $K \subseteq N(y)$ of size $d/2 < |K| \leq d$. We claim that $|N(K)| \geq d^2/(2 \Delta_2)$.
  Indeed, for each vertex $x \in N(K) \setminus \{ y \}$ note that $|N(x) \cap K| \leq \Delta_2$ since otherwise the co-degree of $x$ and $y$ would exceed $\Delta_2$. Since the graph is $d$-regular, we have 
  \[ |E(K, N(K) \setminus \{y\})| = |E(K, N(K))| - |K| = (d-1)|K| \] and thus $|N(K)|\geq 1 + (d-1)|K|/\Delta_2$. As $|K|> d/2$ and $d$ is large, this proves the claim.

  We now show the required bounds on $b-a$. Suppose $\mathcal{G}^{\mathcal{D}}(a, b)$ is non-empty for $d^2 \leq a \leq n/4$. Then, taking $A \in \mathcal{G}^{\mathcal{D}}(a, b)$, we have $|[A]| = a \leq n/4$ and so
  \[ 
  b = |N(A)| = |N([A])| \geq \left(1+\frac{c}{d}\right)a. 
  \]
  We may assume $c/d \leq 1$, which implies
  \[
    a \leq \frac{b}{1+c/d} \leq \left(1-\frac{c}{2d}\right)b = \left(1 - \frac{\delta'}{d}\right)b,
  \]
  showing the first bound.
  As $a \geq d^2$, we have
  \[
    b - a \geq \frac{ca}{d} \geq \frac{c}{d} \cdot d^2 \geq \frac{c_0d}{\log^2 d},
  \]
  since we may assume $d$ is large enough. This proves the second bound.
  Thus, by \Cref{l:containernew}, for $\lambda \geq \frac{C_0 \log^2 d}{d^{1/2}}$,
  \[
    \sum_{A\in \mathcal{G}^{\mathcal{D}}(a, b)}\frac{\lambda^{|A|}}{(1+\lambda)^b}\leq \frac{n}{2}\exp\left(-\frac{(b-a) \log^2 d}{6d}\right),
  \]
  proving \Cref{l:container}.
\end{proof}

\section{Deferred calculations} \label{sec:calculations}

First recall that $\lambda_1 \coloneqq \min(\lambda, 1)$ and
\begin{align}
  \gamma(\ell) &\coloneqq     \left\{\begin{array} {c@{\quad \textup{if} \quad}l}
    (1+ \lambda)^{d - \Delta_2 \ell} / d^8 & \ell \leq d/ \log \log d, \\[+1ex]
    (1+ \lambda)^{\frac{16 \log d}{\lambda_1}} & d/ \log \log d < \ell \le d^3 \log n, \\[+1ex]
    \exp(1/d^{2}) & \ell > d^3 \log n.
  \end{array}\right.
\end{align}

Let us prove \Cref{c:gamma_monotone}.
\begin{claim}
    The function $\ell \mapsto \gamma(\ell)$ is non-increasing.
\end{claim}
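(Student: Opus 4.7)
The plan is to verify monotonicity within each of the three pieces and then check that the function does not jump up at the two boundary points $\ell_1 \coloneqq d/\log\log d$ and $\ell_2 \coloneqq d^3 \log n$.

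Within each piece, the verification is immediate: on the first piece, $\ell \mapsto (1+\lambda)^{d-\Delta_2\ell}/d^8$ is strictly decreasing in $\ell$ because $\Delta_2 > 0$ and $(1+\lambda) > 1$; on the second and third pieces, $\gamma$ is constant in $\ell$. So the whole content of the claim lies in the boundary comparisons, which is the main obstacle and the only place where the hypothesis $\lambda \geq C_0 (\log^2 d)/\sqrt{d}$ enters.

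At $\ell_1$, I need to show
\[
  (d - \Delta_2 \ell_1)\log(1+\lambda) - 8\log d \;\geq\; \frac{16 \log d}{\lambda_1}\,\log(1+\lambda).
\]
The right-hand side satisfies $\log(1+\lambda)/\lambda_1 \leq \max(1, \lambda) \leq C_0$, using $\log(1+\lambda) \leq \lambda$ together with the two cases $\lambda \leq 1$ (where $\lambda_1 = \lambda$) and $\lambda \geq 1$ (where $\lambda_1 = 1$, $\lambda = O(1)$); so the right-hand side is $O(\log d)$. The left-hand side is at least $d(1-o(1))\log(1+\lambda) - 8\log d$ since $\Delta_2 \ell_1 = \Delta_2 d/\log\log d = o(d)$. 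Using $\log(1+\lambda) \geq \lambda_1/2$ (which follows from $\log(1+x)\geq x/2$ for $x \in [0,1]$ and $\log(1+x)\geq \log 2 \geq 1/2$ for $x\geq 1$), the left-hand side is at least $d\lambda_1/4 - 8\log d$, and the hypothesis $\lambda \geq C_0(\log^2 d)/\sqrt d$ gives $d\lambda_1 = \Omega(\sqrt d \log^2 d)$, which dominates any $O(\log d)$ term for $d$ large.

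At $\ell_2$, I need $\frac{16\log d}{\lambda_1}\log(1+\lambda) \geq 1/d^2$. By the same estimate $\log(1+\lambda)/\lambda_1 \geq 1/2$, the left-hand side is at least $8\log d$, which trivially exceeds $1/d^2$. Combining the within-piece monotonicity with these two boundary inequalities yields the claim.
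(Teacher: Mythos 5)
Your proof is correct and follows essentially the same route as the paper's: verify monotonicity within each piece (trivial) and then check the two boundary comparisons at $\ell = d/\log\log d$ and $\ell = d^3\log n$, with the hypothesis $\lambda \geq C_0(\log^2 d)/\sqrt d$ doing the work at the first boundary. The paper bounds $\gamma(d/\log\log d)$ below by $(1+\lambda)^{\sqrt d/2}$ as an intermediate step whereas you compare logarithms directly via $\log(1+\lambda) \asymp \lambda_1$, but this is only a cosmetic difference in the estimates.
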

\begin{proof}
The claim is clear for each regime of $\ell$ since in the first regime $\gamma$ is decreasing in $\ell$ and in the other two regimes $\gamma$ is constant.
At the first transition when $\ell=d/ \log \log d$, we need to check that
\[
\gamma(d/ \log \log d)= \frac{(1+\lambda)^{d-\Delta_2 d/ \log \log d}}{d^{8}} \geq (1+\lambda)^{\sqrt{d}/2} \geq (1+\lambda)^{\frac{16 \log d}{\lambda_1}},
\]
which is true as $d \to \infty$ since $\lambda \geq C_0 \frac{\log^2 d}{d^{1/2}}$.
At the second transition when $\ell=d^3 \log n$, we need to check that
\[
(1+\lambda)^{\frac{16 \log d}{\lambda_1}} \geq \exp(1/d^{2}).
\]
Since $\lambda$ is bounded from above, this is clear if $\lambda \geq 1$. Otherwise, we can rewrite the left-hand side as
\[
(1+\lambda)^{\frac{\log d}{\lambda_1}} = \exp\left(\frac{16\log d}{\lambda} \log (1+\lambda)\right) \geq 2^{16 \log d},
\]
which is indeed much larger than $\exp(1/d^{2})$.
\end{proof}

\begin{claim}\label{c:gamma-constant-k}
    For constant $k \in \mathbb{N}$, it holds that $k \log \gamma(k) \leq \ell \log \gamma(\ell)$ for every $k < \ell$.
\end{claim}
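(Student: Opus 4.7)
The plan is to upper bound $k \log \gamma(k)$ once, and then, in each of the three regimes of $\ell$, lower bound $\ell \log \gamma(\ell)$ by something that comfortably exceeds this upper bound. Since $k$ is fixed and $d \to \infty$, eventually $k \le d/\log\log d$ and so $\gamma(k) = (1+\lambda)^{d-\Delta_2 k}/d^8$, which gives $k \log \gamma(k) \le k d \log(1+\lambda) = O(d)$ (using the hypothesis $\lambda \le C_0$).

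The first case is $k \le \ell \le d/\log\log d$, where the same formula for $\gamma$ is used on both sides. A short calculation factorises
\[
\ell \log\gamma(\ell) - k \log\gamma(k) = (\ell - k)\bigl[(d - \Delta_2(\ell+k))\log(1+\lambda) - 8\log d\bigr].
\]
The bracket is positive for large $d$: we have $d - \Delta_2(\ell+k) \ge d/2$ in this range, and the hypothesis $\lambda \ge C_0(\log^2 d)/\sqrt{d}$ together with $\log(1+x) \ge x/2$ for small $x$ forces $(d/2)\log(1+\lambda) = \Omega(\sqrt{d}\log^2 d) \gg 8\log d$.

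In the middle regime $d/\log\log d < \ell \le d^3 \log n$, the function $\gamma$ is the constant $(1+\lambda)^{16(\log d)/\lambda_1}$, so $\ell \log \gamma(\ell) \ge (d/\log\log d)\cdot(16\log d/\lambda_1)\log(1+\lambda)$. Splitting into $\lambda \ge 1$ (where $\log(1+\lambda) \ge \log 2$ and $\lambda_1 = 1$) and $\lambda < 1$ (where $\log(1+\lambda) \ge \lambda/2 = \lambda_1/2$), both subcases yield $\ell \log\gamma(\ell) = \Omega(d \log d/\log\log d)$, which dominates $O(d)$. In the top regime $\ell > d^3 \log n$, $\log\gamma(\ell) = 1/d^2$ gives $\ell \log\gamma(\ell) \ge d \log n$, and this dominates $O(d)$ by the hypothesis $n = \omega(d^5)$.

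The one subtlety to watch for is that $\ell \mapsto \ell\log\gamma(\ell)$ is \emph{not} monotone: at the transition from the first to the second regime the function $\gamma$ itself drops by a polynomial factor, so one cannot simply argue by incrementing $\ell$. This forces a regime-by-regime comparison rather than a monotonicity argument; in every regime, however, the lower bound on $\ell \log\gamma(\ell)$ exceeds the $O(d)$ upper bound on $k \log\gamma(k)$ by a polylogarithmic (or, in the last regime, polynomial) margin, so no delicate estimate is needed beyond the routine bounds above.
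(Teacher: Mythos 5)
Your proof is correct and follows essentially the same regime-by-regime structure as the paper's. The paper's argument for the first regime bounds $\ell\log\gamma(\ell) = (1+o(1))\,\ell d\log(1+\lambda) \geq kd\log(1+\lambda) \geq k\log\gamma(k)$, whereas you explicitly factorise the difference as $(\ell-k)\bigl[(d-\Delta_2(\ell+k))\log(1+\lambda) - 8\log d\bigr]$ and show the bracket is positive; this is a slightly cleaner way to carry out the same comparison. The middle and top regimes are handled identically, using the same proxy $kd\log(1+\lambda) = O(d)$ as the quantity to beat, and your closing remark about the failure of pointwise monotonicity of $\ell\mapsto\ell\log\gamma(\ell)$ at the first transition is accurate and correctly identifies why a cruder argument would not suffice.
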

\begin{proof}
Let $k \in \mathbb{N}$ be a fixed constant.
Consider first $k < \ell \leq d/ \log \log d$. In this regime we have
\begin{align*}
    \ell \log (\gamma(\ell)) &= \ell (d-\Delta_2\ell) \log(1+\lambda) - 8 \ell \log d \\
    & = (1+o(1))\ell d  \log(1+\lambda)\\
    & \geq kd \log(1+\lambda) \geq k \log (\gamma(k)),
\end{align*}
where the second equality follows using $\lambda \geq C_0 \frac{\log^2 d}{d^{1/2}}$ implying that the first term is dominating.

For $d/ \log \log d < \ell \leq d^3 \log n$ using that $\lambda$ is bounded from above we have
\begin{align*}
    \ell \log (\gamma(\ell)) &\geq \frac{16 \log d}{\lambda_1 \log \log d} d \log(1+\lambda) \geq k d \log (1+\lambda) \geq k \log (\gamma(k)).
\end{align*}

For $\ell > d^3 \log n$ we have
\[
    \ell \log (\gamma(\ell)) \geq d^3 \log n \cdot \frac{1}{d^{2}} = d \log n \geq k d \log (1+\lambda) \geq k \log (\gamma(k)),
\]
finishing the proof.
\end{proof}

\section{Cluster expansion coefficients for $\car_{i=1}^t K_{s,s}$}\label{app:coefficients}
Letting $\hat{L}_j = L_j \cdot 2^{jst} / (2s)^t$, the computed coefficients are listed below.
\begin{align*}
  \hat{L}_1 &= \frac{1}{2}, \\
  \hat{L}_2 &= \frac{3}{8} \, s^{2} t^{2} + \frac{1}{8} \, {\left(2 {\left(s - 1\right)} 2^{s} - 3 \, s^{2} - 2 \, s + 2\right)} t - \frac{1}{4}, \\
  \hat{L}_3 &= \frac{9}{16} \, s^{4} t^{4} - \frac{1}{24} \, {\left(- 18 \, {\left(s^{3} - s^{2}\right)} 2^{s} + 27 \, s^{4} + 28 \, s^{3} - 18 \, s^{2}\right)} t^{3} \\
            &\quad + \frac{1}{16} \, {\left(4 \, {\left(s^{2} - 2 \, s + 1\right)} 2^{2 \, s} - 8 \, {\left(s^{3} - 2 \, s + 1\right)} 2^{s} + 9 \, s^{4} - 4 \, s^{3} - 2 \, s^{2} - 8 \, s + 4\right)} t^{2} \\
            &\quad + \frac{1}{24} \, {\left(- 2 \, {\left(2 \, s^{2} - 3 \, s + 1\right)} 2^{2 \, s} - 6 \, {\left(s^{3} - 2 \, s^{2} + 3 \, s - 2\right)} 2^{s} + 34 \, s^{3} - 11 \, s^{2} + 12 \, s - 10\right)} t \\
            &\quad + \frac{1}{6},\\
  \hat{L}_4 &= \frac{9}{8} \, s^{6} t^{6} - \frac{3}{8} \, {\left(- 6 \, {\left(s^{5} - s^{4}\right)} 2^{s} + 9 \, s^{6} + 11 \, s^{5} - 6 \, s^{4} \right)} t^{5} \\
            &\quad + \frac{1}{64} \, \Bigl(96 \, {\left(s^{4} - 2 \, s^{3} + s^{2}\right)} 2^{2 \, s} - 8 \, {\left(27 \, s^{5} + 7 \, s^{4} - 58 \, s^{3} + 24 \, s^{2}\right)} 2^{s} \\
            &\qquad\qquad + 216 \, s^{6} + 120 \, s^{5} + 349 \, s^{4} - 272 \, s^{3} + 96 \, s^{2}\Bigr) t^{4} \\
            &\quad - \frac{1}{96} \, \Bigl(- 32 \, {\left(s^{3} - 3 \, s^{2} + 3 \, s - 1\right)} 2^{3 \, s} + 6 \, {\left(24 \, s^{4} - 35 \, s^{3} - 21 \, s^{2} + 48 \, s - 16\right)} 2^{2 \, s} \\
            &\qquad\qquad - 12 \, {\left(s^{5} + 47 \, s^{4} - 64 \, s^{3} + 24 \, s - 8\right)} 2^{s} \\
            &\qquad\qquad + 108 \, s^{6} - 1314 \, s^{5} + 4173 \, s^{4} - 860 \, s^{3} + 30 \, s^{2} + 96 \, s - 32\Bigr) t^{3} \\
            &\quad + \frac{1}{64} \, \Bigl(4 \, {\left(s^{2} - 2 \, s + 1\right)} 2^{4 \, s} - 32 \, {\left(s^{3} - 2 \, s^{2} + s\right)} 2^{3 \, s} \\
            &\qquad\qquad + 4 \, {\left(30 \, s^{4} - 69 \, s^{3} + 5 \, s^{2} + 48 \, s - 14\right)} 2^{2 \, s} \\
            &\qquad\qquad + 8 \, {\left(7 \, s^{5} - 225 \, s^{4} + 301 \, s^{3} - 63 \, s^{2} - 32 \, s + 12\right)} 2^{s} \\
            &\qquad\qquad + 162 \, s^{6} - 2028 \, s^{5} + 8723 \, s^{4} - 3920 \, s^{3} + 686 \, s^{2} + 104 \, s - 44\Bigr) t^{2} \\
            &\quad - \frac{1}{96} \, \Bigl(6 \, {\left(s^{2} - 2 \, s + 1\right)} 2^{4 \, s} - 2 \, {\left(9 \, s^{3} - 6 \, s^{2} - 13 \, s + 10\right)} 2^{3 \, s} \\
            &\qquad\qquad + 2 \, {\left(90 \, s^{4} - 239 \, s^{3} + 132 \, s^{2} + 11 \, s + 6\right)} 2^{2 \, s} \\
            &\qquad\qquad - 12 \, {\left(s^{5} + 203 \, s^{4} - 293 \, s^{3} + 85 \, s^{2} + 8 \, s - 4\right)} 2^{s} \\
            &\qquad\qquad + 243 \, s^{6} - 1944 \, s^{5} + 9651 \, s^{4} - 5416 \, s^{3} + 1143 \, s^{2} + 60 \, s - 46\Bigr) t \\
            &\quad - \frac{1}{8}.
\end{align*}

The above coefficients, as well as $L_5$ and $L_6$ (which were omitted for space reasons and can be found in the GitHub repository), were computed for general $\lambda$ and then specialised for $\lambda = 1$.

\end{document}